\setlist[description]{leftmargin=1.5em,   
	labelindent=0pt,    
	labelsep=0.4em}     
\setlist[enumerate]{leftmargin=1.5em}
\setlist[itemize]{leftmargin=1.5em}
\begin{document}

\headers{}{}

\title{{
	Geometric-Perturbation-Robust Cut-Cell Scheme for Two-Material Flows: Exact Pressure-Equilibrium Preservation and Rigorous Analysis}
	\thanks{
			This work was partially supported by the National Key R\&D Program of China (No.~2022YFA1004500).}
	}
	
\author{
	Chaoyi Cai\thanks{Co-first author. School of Mathematical Sciences, Xiamen University, Xiamen, Fujian 361005, China (\email{caichaoyi@stu.xmu.edu.cn}).}
	\and Di Wu\thanks{Co-first author. School of Mathematical Sciences, Xiamen University, Xiamen, Fujian 361005, China (\email{wudiwork@stu.xmu.edu.cn}).}
	\and Jianxian Qiu\thanks{Corresponding author. School of Mathematical Sciences and Fujian Provincial Key Laboratory of Mathematical Modeling and High-Performance Scientific Computing, Xiamen University, Xiamen, Fujian 361005, China (\email{jxqiu@xmu.edu.cn}).}
	}
	
	\maketitle
	
	\begingroup
	\renewcommand\thefootnote{\fnsymbol{footnote}}%
	\setcounter{footnote}{0}%
	\setlength{\footnotemargin}{1.1em}
	\endgroup
\begin{abstract}
	Preserving pressure equilibrium across material interfaces is critical for the stability of compressible multi‐material flow simulations, yet most interface‐fitted sharp‐interface schemes are notoriously sensitive to interface geometry: even slight perturbations of the captured (or tracked) interface can trigger large spurious pressure oscillations. We present a moving-interface cut-cell method that is \emph{geometric-perturbation-robust (GPR)} for the compressible Euler equations with two materials. By construction, the scheme provably preserves exact interfacial pressure equilibrium in the presence of small interface-position errors.
	The key is a strict consistency between the conserved variables and the \emph{geometric moments} (i.e., the integrals of monomials) of every cut cell. We formulate auxiliary transport equations whose discrete solutions furnish \emph{evolved} geometric moments; using a discretization consistent with the flow solver, these geometric moments remain perfectly synchronized with the conserved variables---even on a deforming mesh. Surpassing the classical geometric conservation law, which constrains only cell volumes, our approach keeps \emph{all} higher-order geometric moments consistent, thereby eliminating accuracy loss caused by geometric mismatches and offering significant potential for \emph{general} moving‐mesh schemes.
	To prevent the reconstruction step from destroying pressure equilibrium, we introduce the notion of \emph{equilibrium-compatible} (EC) reconstructions. 
	A carefully designed modification equips any conventional weighted essentially non-oscillatory (WENO) reconstruction with the EC property; we detail a third-order EC multi-resolution WENO (EC-MRWENO) variant and employ it both for cell-interface flux calculations and for the redistribution procedure following each Runge--Kutta step.
	The tight coupling of EC-MRWENO with the evolved moments yields the first cut-cell solver that is simultaneously provably GPR and \emph{genuinely high-order}: it attains second-order accuracy (in the analytic-continuation sense) precisely at material interfaces---even in the presence of density discontinuities---while preserving third-order accuracy in smooth regions. Extensive two-dimensional tests confirm the framework’s robustness, accuracy, and stability under geometric perturbations and topological changes.
\end{abstract}

\begin{MSCcodes}
	65M08, 35L65, 76T10
\end{MSCcodes}

\section{Introduction}

\subsection{Governing equations}
High-speed compressible flows involving multiple materials occur in a variety of demanding applications in computational fluid dynamics, such as inertial-confinement fusion (ICF) and underwater explosions. When viscous and diffusive effects are negligible, such \emph{multi-material} (sometimes called \emph{multi-medium}) problems are routinely modeled by the inviscid compressible Euler equations:
\vspace{-5pt}
\begin{equation}
	\label{equ:euler}
	\frac{\partial \mathbf U}{\partial t} \;+\; \nabla\!\cdot\!\mathbf F(\mathbf U)
	\;=\; 0
	\vspace{-5pt}
\end{equation}
with the conserved variable and flux tensor defined by
$
\mathbf U = \bigl(\rho,\,\rho\mathbf v^\top,\,E\bigr)^{\top}
$,
$\label{equ:eulerflux}
\mathbf F(\mathbf U)=\bigl(\rho\mathbf v,\rho\mathbf v\otimes\mathbf v + p\mathbf I,(E+p)\mathbf v\bigr)^\top.$ 
Here $\rho$ denotes the density, $\mathbf v=(v_1,\dots,v_d)^\top$ the velocity vector, $E$ the total energy, and $p$ the thermodynamic pressure. Energy and pressure are linked through
\vspace{-5pt}
\begin{equation}\label{equ:Ep}
	E = \rho e+\frac12 \rho\lvert\mathbf v\rvert^{2},
	\vspace{-5pt}
\end{equation}
where $e$ is the specific internal energy.  An equation of state (EOS)
$p=p(\rho,e)$ is required to close the system. In this work, we assume a {\em stiffened} EOS of the form
\vspace{-5pt}
\begin{equation}
	\label{equ:EOS}
	p \;=\; (\gamma-1)\,\rho e \;-\;\gamma B,
	\vspace{-5pt}
\end{equation}
where the parameters $(\gamma,B)$ are material-specific constants. In particular, a ``multi-material'' flow is one in which the EOS parameters $(\gamma,B)$ are piecewise constant and tied to each material, moving with the flow. The compressible Euler equations \eqref{equ:euler} equipped with such piecewise‑constant EOS parameters are used to model interactions among distinct compressible fluids (gases, liquids, etc.) under high‑pressure conditions~\cite{saurel1999simple}.

Classical schemes for hyperbolic conservation laws (e.g.\ essentially non-oscillatory (ENO) and weighted ENO (WENO) schemes \cite{shu2020essentially}, discontinuous Galerkin (DG) schemes \cite{cockburn2012discontinuous}, etc.) are well-established and successfully capture shocks in single-material flows. However, if applied naively to a multi-material flow with discontinuous EOS parameters, these schemes produce severe nonphysical pressure oscillations at material interfaces. 
This well-known interface instability arises not from the high-order reconstruction, but from the discontinuity in the EOS (and energy field) across the interface \cite{abgrall2001computations}. To robustly simulate multi-material flows, it is therefore essential to account for the different EOS across interfaces and preserve pressure equilibrium across interfaces.

\subsection{Brief review}

Numerical methods for multi-material flows generally fall into two broad categories.  {\em Diffuse-interface} methods embed the material interface within a thin, numerically diffuse mixing layer. These methods are easy to implement in multiple dimensions and naturally handle topological changes.  Diffuse-interface methods have achieved some success, but they tend to produce artificially smeared interfaces. The mixing layer inexorably spreads and the precise interface location becomes blurred, eventually causing modeling error. Moreover, because the interface is intentionally smeared across several grid cells, the scheme's inherent numerical dissipation limits the convergence rate to first order, even when the underlying algorithm is formally high order. These issues have been explored and partially addressed; see the comprehensive reviews \cite{saurel2018diffuse,adebayo2025review}.

By contrast, {\em sharp-interface} methods explicitly capture or track material interfaces. Such schemes typically involve two ingredients: (i) an interface-capturing (or tracking) algorithm (e.g., level set \cite{GIBOU201882} and volume of fluid \cite{mohan2024volume} methods) to locate the discrete interface $\Gamma_h$, and (ii) a single-material solver advances the flow in regions away from $\Gamma_h$, while a multi-material solver handles the cells adjacent to $\Gamma_h$. A popular sharp-interface approach is the ghost fluid method (GFM) \cite{fedkiw1999isobaric}: a level set method identifies the interface, and ghost states are defined across it so as to impose the jump conditions (e.g., equal pressure) between fluids. The GFM essentially solves single-material Riemann problems on each side and uses ``ghost'' cells to prevent pressure oscillations. Subsequent variants of the GFM, such as the modified GFM \cite{LIU2003651} and the real GFM \cite{WANG2006RGFM}, further improve the shock-capturing robustness. However, the reliance on virtual ghost cells inherently limits the formal order of accuracy near the interface. Cut-cell methods are another popular class of sharp-interface methods, where grid cells intersected by the interface are geometrically cut into irregular polyhedral subcells (cut cells) \cite{SEO20117347,SCHNEIDERS2013786,DUPUY2025106627,liang2025fourth}. This ensures the interface lies exactly on cell faces of a locally unstructured mesh. Cut-cell methods have the benefit of smaller conservation errors and, in principle, enable higher-order reconstruction near the interface \cite{lin2017comparison}.  Numerous cut-cell methods have been developed for multi-material flows, often involving sophisticated cell-cutting and merging algorithms \cite{hu2006conservative,luo2015conservative,lin2017simulation,deng2018simulating,zheng2021high}. Other sharp-interface strategies include, but are not limited to, moment-of-fluid methods \cite{HERGIBO2025113908}, front-tracking methods \cite{TERASHIMA20094012}, interface treatment methods \cite{chertock2008interface}, simple single-fluid algorithms \cite{abgrall2001computations} and path-conservative methods \cite{xiong2012weno}. For further details on sharp‐interface methods, see the review \cite{GIBOU2019442}.

\begin{figure}[!thbp]
	\centering
	\captionsetup[subfigure]{labelformat=empty}
	\setlength{\abovecaptionskip}{-2pt}
	\subfloat[]{
		\includegraphics[height=0.14\textheight]{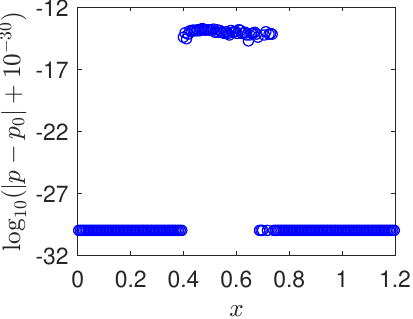}\label{fig:pureI_p_noPer_old}
	}\hspace{0.00\textwidth}
	\subfloat[]{
		\includegraphics[height=0.137\textheight]{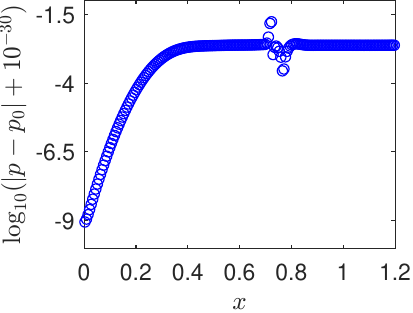}\label{fig:pureI_p_Per_old}
	}\hspace{0.00\textwidth}
	\subfloat[]{
		\includegraphics[height=0.14\textheight]{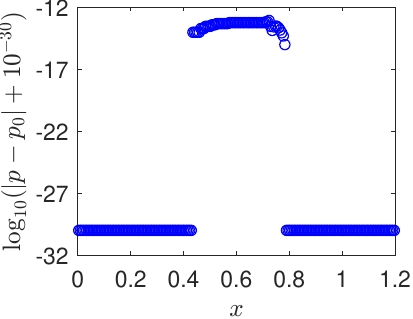}\label{pureI_p_Per_new}
	}\\[-18pt]
	\hspace{0.000\textwidth}
	\subfloat[]{
		\includegraphics[height=0.139\textheight]{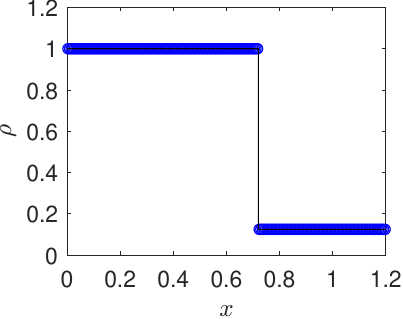}\label{fig:pureI_rho_noPer_old}
	}\hspace{0.01\textwidth}
	\subfloat[]{
		\includegraphics[height=0.139\textheight]{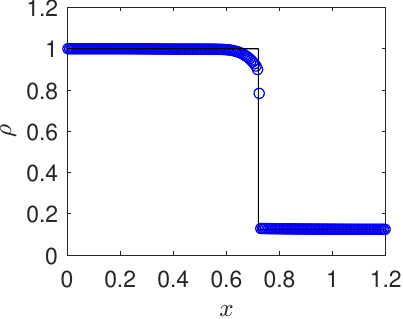}\label{fig:pureI_rho_Per_old}
	}\hspace{0.01\textwidth}
	\subfloat[]{
		\includegraphics[height=0.139\textheight]{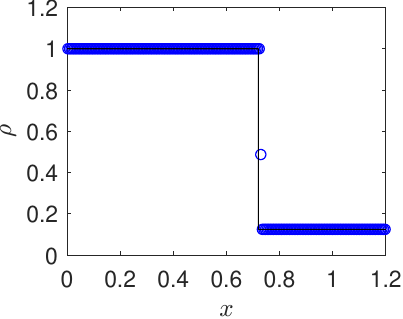}\label{pureI_rho_Per_new}
	}
	\captionsetup{font=small}
	\caption{Sensitivity of three schemes to geometric perturbations for problem~\eqref{test:pureI}: 
		(Left) solution from the conservative one‑dimensional cut‑cell method~\cite{liu2001simulation}, employing first‑order reconstruction with an LLF flux; 
		(Middle) same method with the level‑set function randomly perturbed at each time step by amplitude~\(10^{-4}\,\Delta x\); 
		(Right) solution from the GPR‑LLF scheme (see Section~\ref{subsec:whole GPR}) under the same perturbation level. Computations use 200 grid points and final time~\(T=0.32\).}

	\label{fig:pureI_randPer}
\end{figure}

\subsection{Geometric perturbation robustness}\label{sec:intro_gpr}

Physically, a material interface behaves as a contact discontinuity: once the local pressure and velocity are uniform $(p^{\ast},\mathbf v^{\ast})$, they remain unchanged under the evolution of the Euler equations. A numerical method must therefore preserve this pressure equilibrium at the discrete level. Failing to do so (i.e., spurious pressure oscillations arise), the scheme becomes unstable and the solution can deviate drastically from the entropy solution. In the early 1990s, Karni proposed evolving the primitive variables $(\rho,\mathbf v,p)$, thereby suppressing the oscillations but sacrificing exact energy conservation~\cite{KARNI199431}. Subsequently, Abgrall et al. \cite{abgrall1996prevent} and Shyue et al. \cite{shyue1998efficient} introduced alternative algorithms that preserve pressure equilibrium. In parallel, seven-equation models and their variants enforce a single or relaxed pressure condition at the model level \cite{BAER1986861,saurel1999multiphase}. Over the past twenty years, researchers have continued to refine pressure-equilibrium-preserving techniques and to survey the field in depth \cite{saurel2018diffuse,adebayo2025review}. It is
therefore fair to state that pressure-equilibrium preservation lies at the \emph{heart} of modern multi-material flow simulation.

However, in interface‐fitted sharp‐interface schemes, the interface-capturing (or tracking) algorithm inevitably introduces small geometric errors, which make it challenging to preserve pressure equilibrium. For example, in level‑set methods one solves $\phi_t + \mathbf{v}\cdot\nabla\phi=0$ and locates the interface via $\phi=0$.  In this process, at least four sources of interface‑position error arise: (i) errors arising from velocity‐field extrapolation \cite{CHANG2013946}; (ii) errors in the level set solver itself \cite{GIBOU201882}; (iii) geometric reconstruction errors when extracting the interface from $\phi$ \cite{lin2017simulation}; and (iv) errors from reinitialization (to enforce $|\nabla\phi|=1$) \cite{shakoor2025review}. Although these errors may be minute---merely tiny perturbations of the exact interface---they can nonetheless undermine the discrete pressure equilibrium. The following test illustrates how sensitive the pressure equilibrium is to the interface location. In the one-dimensional pure interface problem
\vspace{-5pt}
\begin{equation}\label{test:pureI}
	(\rho,v_1,p,\gamma,B)=
	\begin{cases}
		(1,\;1,\;1,\;1.4,\;0), & x\le 0.4,\\
		(0.125,\;1,\;1,\;4,\;1), & x>0.4.
	\end{cases}
	\vspace{-5pt}
\end{equation}
A conservative first-order Lax--Friedrichs (LLF) cut-cell method \cite{liu2001simulation} yields a perfectly flat pressure profile if the interface position is exact. However, even a tiny random perturbation of the interface location causes large spurious pressure oscillations (see Figure \ref{fig:pureI_randPer}). Thus, a key challenge is to mitigate the coupling between interface‑location errors and the flow solver, so that small geometric errors do not catastrophically affect the numerical solution. To our knowledge, this important issue of ``geometric perturbation robustness'' has not been systematically addressed in the literature.

\subsection{Contributions and innovations of this paper}
The contributions and innovations of this work include:
\begin{description}
	\item[$\bullet$] {\bf Geometric perturbation robustness.} We introduce the concept of {\em geometric-perturbation-robust (GPR)} for sharp-interface schemes. A scheme is said to be GPR if small perturbations in the captured interface position do not compromise exact pressure-equilibrium preservation. We analyze why conservative sharp-interface methods struggle to be GPR, shedding light on the fundamental conflict between conservation and pressure-equilibrium preservation.
	
	\item[$\bullet$] {\bf PDE-based evolved geometric moments for \emph{general moving-mesh schemes}.} Our tests show that a cut-cell scheme using reconstructed geometric moments may fail to converge. We instead formulate auxiliary partial differential equations (PDEs) whose discrete solutions yield the \emph{evolved} geometric moments. We prove that, with a \emph{linear} spatial discretization, \emph{any moving-mesh scheme} that uses these evolved geometric moments reproduces the pure advection equation $u_t = 0$ \emph{exactly} for polynomials of \textbf{arbitrary degree}, even on a deforming mesh.
	
	\item[$\bullet$] \textbf{GPR cut-cell scheme with \emph{genuinely second-order accuracy}.}
	We develop a finite-volume cut-cell solver for two-material flows that is
	\emph{provably} GPR.  The key ingredient is a tight	coupling between the \emph{evolved} geometric moments and a single-material (one-sided) third-order WENO reconstruction; this pairing not only enforces	the geometric conservation law (GCL) but also keeps all higher-order moments consistent with the discrete solution. As a result,
	\emph{even when a contact discontinuity lies on the material interface, the scheme achieves second-order accuracy} (in the analytic-continuation sense) at the interface and retains third-order accuracy
	in the single-material regions, marking a significant improvement in
	interface resolution. With a curved-interface representation, the framework
	can, in principle, be extended to even higher order (this extension is the subject of our ongoing work).
	
	
	\item[$\bullet$] \textbf{EC reconstruction.}
	We introduce the notion of an \emph{equilibrium-compatible} (EC) reconstruction:	a reconstruction method is called EC if it returns the same constant pressure $p^{*}$ and velocity $\mathbf v^{*}$ whenever every cell in the stencil shares these values $(p^{*},\mathbf v^{*})$. We prove that when \emph{an} EC reconstruction is employed, the proposed GPR cut-cell scheme preserves the discrete pressure equilibrium exactly. Furthermore, we give a minimal modification that endows any WENO reconstructions with the EC property.
	As a concrete example, we design a third-order EC multi-resolution WENO (EC-MRWENO) reconstruction. The proposed EC modification is essential---particularly
	during the redistribution step---for preserving pressure equilibrium.
	
	
	
	\item[$\bullet$] \textbf{Topology‐change capability.}
	The proposed framework can be integrated into other cut-cell solvers to preserve pressure equilibrium without undermining their native ability to handle complex topological changes. Representative tests---such as shock-bubble interactions and underwater explosion problems---confirm that the method remains stable even when the interface undergoes severe deformation and breakup.
\end{description}

The remainder of the paper is organized as follows. Section~2 formalizes the concept of a pressure-equilibrium-preserving scheme, introduces the GPR property, and analyzes the intrinsic incompatibility between pressure-equilibrium preservation and conservation. Section~3 derives the PDE-based evolved geometric moments and describes the resulting GPR cut-cell scheme. Section~4 investigates the EC property of polynomial reconstructions and presents a simple EC modification for general WENO reconstructions. Section~5 presents extensive numerical experiments that demonstrate the accuracy and robustness of the proposed methods.
Finally, Section~6 offers concluding remarks.
\section{Mathematical preliminaries on pressure-equilibrium preservation and geometric perturbation robustness}\label{sec:mathpre}
This section investigates the intrinsic pressure-equilibrium preservation properties of the Euler equations, introduces the notion of \emph{geometric-perturbation-robust} (GPR), and thereby provides the theoretical foundation for the GPR cut-cell scheme developed in the next section. We also analyze the intrinsic incompatibility between exact pressure‑equilibrium preservation and strict conservation, and discuss the extension of pressure‑equilibrium preservation to general conservation laws.


\subsection{Pressure-equilibrium preservation}\label{sec:PE}
We begin by recalling a well-known property of the compressible Euler
equations:

\begin{property}[Linear-degeneracy property]\label{prop:linear-degen}
	Consider the $d$-dimensional compressible Euler equations in
	$\mathbb R^d$ (Eq.~\eqref{equ:euler}) modeling a multi-material flow (i.e., the EOS parameters ($\gamma,B$) are piecewise-constant and advected with the material).  Let
	$\mathbf x=(x_1,\dots,x_d)^\top$ denote the spatial coordinates and assume
	that the initial data satisfy $\mathbf v_0(\mathbf x)=\mathbf v^\ast$, $p_0(\mathbf x)=p^\ast$, where $\mathbf v^\ast$ and $p^\ast$ are constants.  Then, for any
	$t>0$, the entropy solution is
	\begin{equation}\label{equ:passive solution}
		\begin{cases}
			\rho(t,\mathbf x)=\rho_0(\mathbf x-\mathbf v^\ast t),\\[2pt]
			p(t,\mathbf x)=p^\ast,\\[2pt]
			\mathbf v(t,\mathbf x)=\mathbf v^\ast.
		\end{cases}
	\end{equation}
	In other words, the initial density profile is merely translated by the constant velocity $\mathbf v^\ast$ without distortion, while pressure and velocity remain uniform.
\end{property}

It is natural to define the pressure‐equilibrium state set:

\begin{Def}\label{def:set G}
	The pressure-equilibrium state set is defined by $$\mathcal{G}(\mathbf{v}^*,p^*;\gamma,B)\!=\!\left\{\mathbf{U}\middle|\mathbf{v}\!=\!\mathbf{v}^*,p\!=\!p^*,\rho\!>\!0\right\}\!=\!\left\{\mathbf{U}\!=\!\left(\rho,(\rho \mathbf{v}^*)^\top,\frac{p^*\!+\!\gamma B}{\gamma-1}\!+\!\frac12\rho|\mathbf{v}^*|\right)^\top\!\middle|\rho\!>\!0\right\},$$
where $\gamma>1$ and $B\geq0$ are the material-specific constants in the EOS \eqref{equ:EOS}.
\end{Def} 

Since every component of $\left(\rho,(\rho \mathbf{v}^*)^\top,\frac{p^*+\gamma B}{\gamma-1}+\frac12\rho|\mathbf{v}^*|^2\right)^\top$ is a linear function of $\rho$, it follows that the set $\mathcal{G}(\mathbf{v}^*,p^*;\gamma,B)$ is convex.
\begin{lemma}\label{lem:convex}
	$\mathcal{G}(\mathbf{v}^*,p^*;\gamma,B)$ is a convex set.
\end{lemma}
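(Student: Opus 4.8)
The plan is to exhibit $\mathcal{G}(\mathbf v^*,p^*;\gamma,B)$ explicitly as the image of a line segment (indeed an affine ray) under an affine map and invoke the fact that convexity is preserved. Concretely, as noted in the paragraph preceding the statement, every component of the vector $\bigl(\rho,(\rho\mathbf v^*)^\top,\tfrac{p^*+\gamma B}{\gamma-1}+\tfrac12\rho|\mathbf v^*|^2\bigr)^\top$ is an affine (in fact linear) function of the single scalar parameter $\rho$. Thus I would define the map $\Phi:(0,\infty)\to\mathbb R^{d+2}$ by $\Phi(\rho)=\bigl(\rho,\rho\mathbf v^*,\tfrac{p^*+\gamma B}{\gamma-1}+\tfrac12\rho|\mathbf v^*|^2\bigr)^\top$, observe $\Phi$ is affine, and note $\mathcal{G}=\Phi\bigl((0,\infty)\bigr)$ by Definition~\ref{def:set G}.

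First I would verify the set equality $\mathcal{G}=\Phi((0,\infty))$: a state $\mathbf U$ lies in $\mathcal{G}$ iff $\mathbf v=\mathbf v^*$, $p=p^*$, $\rho>0$; using $\mathbf U=(\rho,\rho\mathbf v,E)^\top$ and the stiffened-EOS relations \eqref{equ:Ep}, \eqref{equ:EOS} to solve $E=\rho e+\tfrac12\rho|\mathbf v|^2$ with $p=(\gamma-1)\rho e-\gamma B$ gives $E=\tfrac{p^*+\gamma B}{\gamma-1}+\tfrac12\rho|\mathbf v^*|^2$, which is exactly the third component of $\Phi(\rho)$. Second, I would take any $\mathbf U_0,\mathbf U_1\in\mathcal{G}$, write $\mathbf U_i=\Phi(\rho_i)$ with $\rho_i>0$, and for $\theta\in[0,1]$ compute $\theta\mathbf U_1+(1-\theta)\mathbf U_0=\theta\Phi(\rho_1)+(1-\theta)\Phi(\rho_0)=\Phi\bigl(\theta\rho_1+(1-\theta)\rho_0\bigr)$ by affineness of $\Phi$; since $\theta\rho_1+(1-\theta)\rho_0>0$, this convex combination again lies in $\mathcal{G}$. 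That establishes convexity.

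There is essentially no obstacle here — the only point requiring a moment's care is the bookkeeping in the set equality, namely correctly inverting the EOS to confirm that fixing $(\mathbf v^*,p^*)$ forces the energy component to be the stated affine function of $\rho$ (so that $\mathcal{G}$ really is the full ray $\Phi((0,\infty))$ and not some proper subset), and checking positivity of the density is closed under convex combination. Everything else is the standard fact that an affine image of a convex set is convex.
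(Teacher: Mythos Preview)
Your proposal is correct and is exactly the argument the paper uses: it observes that every component of $\bigl(\rho,(\rho\mathbf v^*)^\top,\tfrac{p^*+\gamma B}{\gamma-1}+\tfrac12\rho|\mathbf v^*|^2\bigr)^\top$ is a linear function of $\rho$, so $\mathcal{G}$ is an affine image of the ray $(0,\infty)$ and hence convex. You have simply spelled out the details the paper leaves implicit.
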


We now present the definition of a pressure‑equilibrium‑preserving scheme.

\begin{definition}[Pressure-equilibrium-preserving scheme]\label{def:pe_scheme}
	Let \(\mathbf{U}_h(\mathbf{x},t;\gamma_h,B_h)\) denote the discrete solution produced by a numerical method for the multi‑material Euler equations \eqref{equ:euler}, where \((\gamma_h,B_h)\) are the numerical EOS parameters at \(\mathbf{x}\) (approximating \((\gamma,B)\)). The scheme is said to be pressure‐equilibrium‐preserving if, whenever
	\[
	\mathbf{U}_h(\mathbf{x},0)\in \mathcal{G}(\mathbf{v}^*, p^*;\gamma_h,B_h)
	\quad \forall~\mathbf{x},
	\]
	then, for any time $t>0$, it holds that
	\[
	\mathbf{U}_h(\mathbf{x},t)\in\mathcal{G}(\mathbf{v}^*, p^*;\gamma_h,B_h)
	\quad \forall~\mathbf{x}.
	\]
	In the finite‐volume context, $\mathbf{U}_h$ refers to the cell‐average value at location~$\mathbf{x}$.
\end{definition}

%

\subsection{Extension for general conservation laws}
Although this paper focuses on pressure-equilibrium preservation for the Euler equations \eqref{equ:euler}, the same idea can be naturally extended to a broader class of conservation laws
$\partial_t \mathbf U + \nabla\!\cdot\!\mathbf F(\mathbf U)
	=0,$ 
where \(\mathbf U\in\mathbb R^m\) is the unknown vector,
\(\mathbf F(\mathbf U)\in\mathbb R^{m\times d}\) the flux tensor.

Suppose the flux can be decomposed as
$
\mathbf F(\mathbf U)
=\mathbf v(\mathbf U)\otimes \mathbf U
+\mathbf R(\mathbf U)$ $~\big(\mathbf v(\mathbf U), \mathbf R(\mathbf U)\in\mathbb R^d\big)$, so that \(\mathbf v\) plays the role of a transport velocity.  If the initial data satisfy $\nabla\cdot\mathbf R\bigl(\mathbf U(\mathbf x,0)\bigr)=\mathbf 0$ and $\mathbf v(\mathbf U(\mathbf x,0))=\mathbf v^*$, then the conservation laws degenerate to the linear advection equations $\partial_t \mathbf U + \mathbf v^*\cdot\nabla\mathbf U = 0,$
whose exact solution is simply $\mathbf U(\mathbf x,t)=\mathbf U_0(\mathbf x-\mathbf v^*t).$ This solution continues to satisfy $\nabla \cdot \mathbf R\bigl(\mathbf U(\mathbf x,0)\bigr)=\mathbf 0$ and $\mathbf v(\mathbf U(\mathbf x,0))=\mathbf v^*$, so both the transport velocity and the ``equilibrium'' condition $\nabla\cdot\mathbf R=0$ are exactly maintained.
Therefore, numerical schemes for the conservation laws should be designed to discretely preserve this linear‑advection structure of non‑uniform states: it must maintain both a constant transport velocity \(\mathbf{v}^*\) and the remainder term $\nabla\cdot\mathbf R=0$, rather than merely preserving a uniform solution \(\mathbf{U}=\mathrm{const}.\)

\begin{remark}
	The above linear‑advection preservation property can be regarded as a generalization of the well‑known \emph{free‑stream preservation} property \cite{obayashi1991free}.
\end{remark}

\subsection{Geometric-perturbation-robust (GPR): Conservative sharp-interface schemes are \emph{not} GPR}
This subsection focuses on \emph{sharp-interface} schemes for the
multi-material Euler equations \eqref{equ:euler}. In our context, a sharp-interface scheme is a \emph{compound scheme} consisting of two ingredients:

\begin{enumerate}\setlength{\itemsep}{2pt}
	\item \textbf{Interface-capturing (or tracking) algorithm.}
	A dedicated interface capturing (or tracking) algorithm (e.g., level set and volume of fluid methods) is
	used to locate the discrete material interface~$\Gamma_h$.
	\item \textbf{Mixed single-/multi-material evolution.}
	Once $\Gamma_h$ is known, a single-material solver advances the flow in regions away from $\Gamma_h$, while a multi-material solver handles the cells adjacent to $\Gamma_h$.
\end{enumerate}

This broad definition subsumes many popular schemes, including the Lagrangian methods, front-tracking methods, ghost-fluid methods, and the cut-cell methods proposed in Section~\ref{sec:framework}. As discussed in Section~\ref{sec:intro_gpr}, \emph{all} interface‑capture (or tracking) algorithms inevitably incur small errors in the interface position. Even though these errors may be minute, \emph{minimal} shifts in the interface location can undermine the discrete pressure equilibrium and severely distort the numerical solution, as shown in Figure~\ref{fig:pureI_randPer}. This sensitivity motivates the following definition.

\begin{Def}[Geometric perturbation robustness]\label{def:GPR}
	A sharp‐interface scheme is GPR if, whenever it is pressure-equilibrium-preserving for the captured (tracked) material interface~$\Gamma_h$, there
	exists a constant $\varepsilon>0$ such that the scheme remains pressure-equilibrium-preserving for any perturbed interface
	$\tilde{\Gamma}_h$ satisfying
	$d_H(\Gamma_h,\tilde{\Gamma}_h)<\varepsilon$,
	where $d_H$ denotes the Hausdorff distance.
\end{Def}

\begin{remark}
	The concept of geometric perturbation robustness can be interpreted
	in a broader sense: small perturbations in the interface
	position do not compromise a given structure of the numerical solution (e.g., positivity of pressure or density \cite{cheng2014positivity}).
	In the present study, we focus exclusively on the preservation of
	pressure equilibrium.
\end{remark}

\begin{figure}[!thbp]
	\centering
	\captionsetup[subfigure]{labelformat=empty}
	\includegraphics[width=0.8\textwidth]{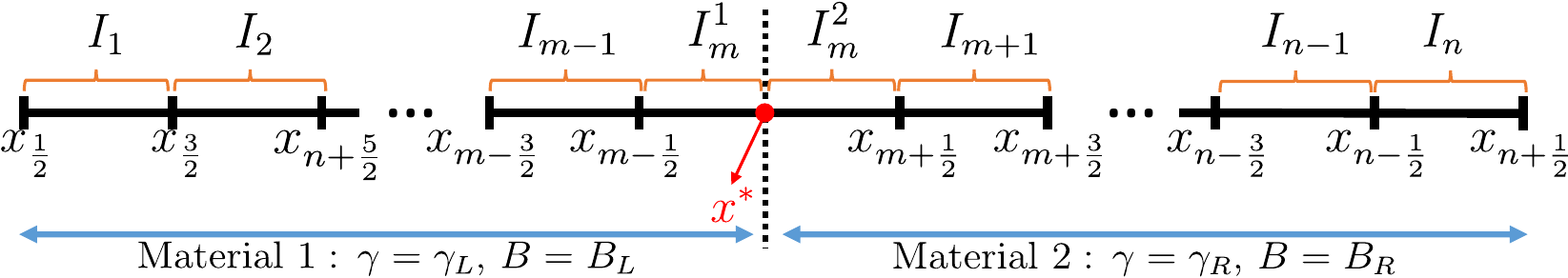}
	\captionsetup{font=small}
	\caption{Grid configuration for a one-dimensional two-material flow problem, $x^*$ is the numerical interface position computed by the interface capturing (or tracking) algorithm.}\label{fig:pureI_mesh}
\end{figure}


The following theorem, viewed through the lens of geometric‑perturbation robustness, reveals the intrinsic incompatibility between conservation and pressure‑equilibrium preservation in numerical schemes:

\begin{theorem}[GPR--conservation conflict]\label{thm:connotGPR}
	A conservative sharp-interface scheme for the multi-material problem
	\eqref{equ:euler} is \emph{not} GPR. Here ``conservative'' means that the scheme preserves the total momentum, the total energy, and the mass of	\emph{each individual material} exactly.
\end{theorem}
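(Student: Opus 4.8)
The plan is to argue directly from the negation of Definition~\ref{def:GPR}: I will exhibit one captured interface $\Gamma_h$ for which the scheme \emph{is} pressure-equilibrium-preserving, and then show that for every $\varepsilon>0$ there is a perturbed captured interface $\tilde\Gamma_h$ with $d_H(\Gamma_h,\tilde\Gamma_h)<\varepsilon$ for which it is \emph{not} --- indeed, the very first geometric re-cut that $\tilde\Gamma_h$ forces already pushes the discrete state out of the equilibrium set $\mathcal G$. Since the obstruction is local and essentially one-dimensional, I will take the interface locally flat (constant in the transverse coordinates when $d>1$) and reduce to $d=1$, and I will use a stationary contact, $\mathbf v^\ast=\mathbf 0$, to keep the energy accounting transparent; the uniformly-translating case needs only the extra remark that pressure equilibrium additionally pins the velocity to $\mathbf v^\ast$, so the kinetic part of the energy is fixed by conservation of total mass and momentum and cancels out of the balance below.

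First I would isolate the scalar that drives the contradiction. Fix a uniform mesh of cell width $h$ and two genuinely distinct materials $(\gamma_L,B_L)\neq(\gamma_R,B_R)$, and for a trial equilibrium pressure $p^\ast$ set $\mathcal E_M:=(p^\ast+\gamma_M B_M)/(\gamma_M-1)$, the internal-energy density (equivalently, since $\mathbf v^\ast=\mathbf 0$, the total-energy density) that a cell labelled with material $M$ must carry in order for the EOS~\eqref{equ:EOS} to return the pressure $p^\ast$. The affine map $p^\ast\mapsto\mathcal E_L-\mathcal E_R$ is not identically zero --- it is a nonzero constant when $\gamma_L=\gamma_R$ (then necessarily $B_L\neq B_R$) and has a single root otherwise --- so I may and do choose $p^\ast>0$ with $\mathcal E_L\neq\mathcal E_R$. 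As the captured interface I take $\Gamma_h$ aligned with a grid face $x_{j-1/2}$, so that there is no cut cell: $C_{j-1}$ is pure material L, $C_j$ is pure material R, and the discrete equilibrium state (uniform pressure $p^\ast$, zero velocity, piecewise-constant density, cell-averaged energy $\mathcal E_L$ to the left and $\mathcal E_R$ to the right) is a fixed point of the scheme. Hence the scheme is pressure-equilibrium-preserving for this $\Gamma_h$ and the hypothesis of Definition~\ref{def:GPR} is met.

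Next I would inject the perturbation. Given $\varepsilon>0$, choose $0<\delta<\varepsilon$ so small that $\tilde\Gamma_h=\{x_{j-1/2}+\delta\}$ lies in the interior of $C_j$, and suppose the interface-capturing stage reports this $\tilde\Gamma_h$. The scheme must now treat $C_j$ as a cut cell and reassign the conserved data near the interface to the new sub-cells --- a sliver labelled L of length $\delta$ and a remainder labelled R of length $h-\delta$ --- be it by an explicit conservative remap, by small-cell merging with $C_{j-1}$, or implicitly through ALE-type face motion. Since the scheme is \emph{conservative}, this reassignment cannot alter the total energy $\int_\Omega E$ in the block $\Omega:=C_{j-1}\cup C_j$ that contains every cell it touches; and before the perturbation $\int_\Omega E=(\mathcal E_L+\mathcal E_R)h$. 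After the $\delta$-shift, however, $\Omega$ is split by $\tilde\Gamma_h$ into an L-region of length $h+\delta$ and an R-region of length $h-\delta$, so for the reassigned state to satisfy $p\equiv p^\ast$ under the new labelling one would need $\int_\Omega E=\mathcal E_L(h+\delta)+\mathcal E_R(h-\delta)=(\mathcal E_L+\mathcal E_R)h+\delta(\mathcal E_L-\mathcal E_R)$. Comparing the two values gives $\delta(\mathcal E_L-\mathcal E_R)=0$, which is false. Hence the reassigned state has $p\neq p^\ast$ in some cell near the interface: an arbitrarily small geometric perturbation destroys the discrete pressure equilibrium, so no robustness radius $\varepsilon$ can exist and the scheme is not GPR.

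The step I expect to be the main obstacle is not this one-line identity but pinning it down at the level of an \emph{arbitrary} conservative sharp-interface scheme. I must argue that, whatever apparatus the scheme uses to respond to the captured interface (cell cutting, small-cell merging, ghost states, flux redistribution, moving faces), the geometric reassignment of near-interface data is itself conservative and acts within a bounded block, so that its total-energy budget stays at the value inherited from the \emph{previous} geometry while the material labelling there has shifted by $\delta$; it is precisely this frozen, now-mismatched budget that makes exact pressure equilibrium unattainable (and one should track the state immediately after the reassignment, since the subsequent flux sub-step merely evolves a state that is already outside $\mathcal G$ and does not restore it). Making this manifestly scheme-independent calls for a careful choice of the block $\Omega$ together with the observation that the remaining ``conservative'' ingredients --- per-material mass and total-momentum conservation --- only add constraints and cannot rescue the energy balance. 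What is left is routine bookkeeping: that the face-aligned equilibrium really is a discrete fixed point, that the $\delta$-shift acts locally, and that passing to $\mathbf v^\ast\neq\mathbf 0$ or to $d>1$ (a flat interface, constant transversally) adds nothing conceptual.
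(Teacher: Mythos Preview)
Your proposal is correct and takes essentially the same approach as the paper: both reduce to a one-dimensional contact discontinuity and derive, from exact energy conservation, the identity $(\mathcal E_L-\mathcal E_R)\cdot\delta=0$ (the paper's Eq.~\eqref{equ: gprnotconserv_equ} with $v^\ast=0$), which forces the interface displacement $\delta$ to vanish. The paper keeps $v^\ast$ arbitrary and balances the mass/energy totals over each material's \emph{entire} region---tracking an interfacial energy flux $f^E$---which makes the argument manifestly scheme-independent, whereas you specialize to $v^\ast=0$ and a local two-cell block; this is simpler, but, as you yourself flag, the scheme-independence then hinges on choosing $\Omega$ large enough to contain whatever the reassignment touches, which the global-region framing sidesteps automatically.
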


\begin{proof}
	For clarity we analyze the simple one-dimensional contact-discontinuity
	problem
	\begin{equation}\label{equ:pureI_proof}
		(\rho,v_1,p,\gamma,B)=
		\begin{cases}
			(\rho_L,v^\ast,p^\ast,\gamma_L,B_L), & x\le x^\ast,\\
			(\rho_R,v^\ast,p^\ast,\gamma_R,B_R), & x>  x^\ast,
		\end{cases}
	\end{equation}
	and the computational mesh is sketched in Figure \ref{fig:pureI_mesh}.
	The interface position produced by the interface capturing (or tracking) algorithm is
	$x^\ast(t)$, while
	$x_{i-\frac12}(t)$ $(1\le i\le n+1)$ denote cell edges;
	$ I_m^{1}(t)$ and $ I_m^{2}(t)$ are the two interface cells,
	all others being single-material cells. For simplicity, we assume fixed domain endpoints: $x_{\frac12}(t)\equiv x_{\frac12}(0)$, $x_{n+\frac12}(t)\equiv x_{n+\frac12}(0)$.
	
	\medskip
	During the first time step ($t=\Delta t$), material~1 occupies the cells
	$I_1(\Delta t)$, $\dots$, $I_{m-1}(\Delta t)$, $I_m^{1}(\Delta t)$.
	The total volume of these cells is
	\begin{equation}\label{equ:dt_Im1}
		\sum_{i=1}^{m-1}\bigl|I_i(\Delta t)\bigr|
		\;+\;
		\bigl|I_m^{1}(\Delta t)\bigr|
		\;=\;
		\sum_{i=1}^{m-1}\bigl|I_i(0)\bigr|
		\;+\;
		\bigl|I_m^{1}(0)\bigr|
		\;+\;
		\bigl(x^\ast(\Delta t)-x^\ast(0)\bigr),
	\end{equation}
	and the total (integrated) density (mass) of material 1 is
	\begin{equation}\label{equ:totalrho1}
		\rho_{L,\text{tot}}(\Delta t)
		\;=\;
		\sum_{i=1}^{m-1}\!\bigl|I_i(\Delta t)\bigr|\,\rho_i
		\;+\;
		\bigl|I_m^{1}(\Delta t)\bigr|\,\rho_m^{1},
	\end{equation}
	where $\rho_i$ and $\rho_m^1$ denote the cell‐averaged densities in $I_i$ and $I_m^1$ at $t=\Delta t$. Note that $\rho_i$ and $\rho_m^1$ need not equal the initial uniform density $\rho_L$.
	
	On a sufficiently fine grid (i.e., for sufficiently large $n$), the cells immediately adjacent to the left boundary $x_{\frac12}$ remain in a uniform state $(\rho_L,v^\ast,p^\ast)$. Consequently, after $\Delta t$, the total mass entering material 1 from the left boundary $x_{\frac12}$ is $\rho_Lv^*\Delta t$, which is computed directly from the fluxes in \eqref{equ:euler}. Moreover, since the scheme strictly conserves mass in each material, there is no mass exchange between two materials. Therefore, the total mass of material 1 at $t=\Delta t$ is given by 
	\begin{equation}\label{equ:totalrho2}
		\rho_{L,\text{tot}}(\Delta t)
		\;=\;
		\left(x^\ast(0)-x_{\frac12}(0)\right)\rho_L
		+ \rho_L\,v^\ast\,\Delta t .
	\end{equation}
	Similarly, let $f^E$ denote the \emph{total} $E$ transferred from material 1 to material 2 during the time step $\Delta t$, we can obtain the total energy of material 1:
	\begin{equation}\label{equ:totalE}
		E_{L,\text{tot}}(\Delta t)
		=\sum_{i=1}^{m-1}\!\bigl|I_i(\Delta t)\bigr|\,E_i
		+\bigl|I_m^{1}(\Delta t)\bigr|\,E_m^{1}
		=\left(x^\ast(0)-x_{\frac12}(0)\right)E_L
		+v^\ast\!(E_L+p^\ast)\Delta t-f^{E},
	\end{equation}
	where $E_L=\frac{p^*+\gamma_LB_L}{\gamma_L-1}+\frac12\rho_L(v^*)^2$, $E_i$ and $E_m^1$ denote the cell‐averaged energy in cells $I_i$ and $I_m^1$ at $t=\Delta t$.
	
	Substituting $E_L \;=\;
	\frac{p^* + \gamma_L B_L}{\gamma_L - 1}
	+ \tfrac12\,\rho_L (v^*)^2$ together with \eqref{equ:dt_Im1}, \eqref{equ:totalrho1}, and \eqref{equ:totalrho2} into \eqref{equ:totalE}, and carrying out straightforward algebraic rearrangements, we obtain
	\begin{equation}\label{equ:fE_final1}
		\frac{f^{E}}{\Delta t}
		= v^\ast p^\ast
		+\frac{p^\ast+\gamma_L B_L}{\gamma_L-1}
		\left(
		v^\ast
		-\frac{x^\ast(\Delta t)-x^\ast(0)}{\Delta t}
		\right).
	\end{equation}
	An analogous calculation for material~2 gives
	\begin{equation}\label{equ:fE_final2}
		\frac{f^{E}}{\Delta t}
		= v^\ast p^\ast
		+\frac{p^\ast+\gamma_R B_R}{\gamma_R-1}
		\left(
		v^\ast
		-\frac{x^\ast(\Delta t)-x^\ast(0)}{\Delta t}
		\right).
	\end{equation}
	Combining \eqref{equ:fE_final1} and \eqref{equ:fE_final2} yields
	\begin{equation}\label{equ: gprnotconserv_equ}
		\bigl(
		\frac{p^\ast+\gamma_L B_L}{\gamma_L-1}
		-\frac{p^\ast+\gamma_R B_R}{\gamma_R-1}
		\bigr)
		\Bigl(
		v^\ast-\frac{x^\ast(\Delta t)-x^\ast(0)}{\Delta t}
		\Bigr)=0.
	\end{equation}
	Because $p^\ast$, $B_L$, $B_R$, $\gamma_L$, and $\gamma_R$
	are arbitrary, the only way to satisfy the identity is to enforce
	$x^\ast(\Delta t)=x^\ast(0)+v^\ast\Delta t$ \emph{exactly}.
	Therefore, regardless of how the energy flux $f^{E}$ is chosen,
	\textbf{any} perturbation of the interface position $x^\ast(\Delta t)$---no
	matter how small---breaks pressure-equilibrium preservation; a conservative sharp-interface scheme is thus \emph{not} GPR.
\end{proof}

If we relax the conservation requirements in Theorem~\ref{thm:connotGPR} to demand only conservation of total energy and \emph{total} mass, one can still derive equation \eqref{equ: gprnotconserv_equ} by an argument similar to that used in Theorem~\ref{thm:connotGPR}. Consequently, we have the following
result:
\begin{proposition}\label{prop:connotGPR_relax}
	Sharp-interface schemes that conserves total energy and \emph{total} mass are \emph{not} GPR.
\end{proposition}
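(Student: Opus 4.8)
The plan is to reuse almost verbatim the machinery of the proof of Theorem~\ref{thm:connotGPR}, changing only the fact that neither per-material mass nor momentum is now assumed conserved --- and momentum played no role in that proof anyway, so only the loss of per-material mass conservation matters. I would again specialize to the one-dimensional contact-discontinuity configuration \eqref{equ:pureI_proof} on the mesh of Figure~\ref{fig:pureI_mesh}, assume the scheme is pressure-equilibrium-preserving for the captured interface, and follow the two materials through the first step $t=\Delta t$. Exactly as in the theorem, pressure-equilibrium preservation forces every cell average at $t=\Delta t$ into $\mathcal G$, so its energy equals $\tfrac{p^\ast+\gamma B}{\gamma-1}+\tfrac12\rho_{\mathrm{cell}}(v^\ast)^2$ with that cell's own $(\gamma,B)$; summing over the material-$1$ cells and invoking the volume identity \eqref{equ:dt_Im1} (whereby the total material-$1$ volume equals $x^\ast(\Delta t)-x_{\frac12}(0)$) expresses $E_{L,\mathrm{tot}}(\Delta t)$ through $A_L:=\tfrac{p^\ast+\gamma_L B_L}{\gamma_L-1}$ and the total material-$1$ mass $\rho_{L,\mathrm{tot}}(\Delta t)$.

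The one genuinely new ingredient is an inter-material mass-exchange term. Since per-material mass is no longer conserved, I would let $f^\rho$ denote the net mass transferred from material~$1$ to material~$2$ across the interface during $\Delta t$, so that \eqref{equ:totalrho2} is replaced by $\rho_{L,\mathrm{tot}}(\Delta t)=(x^\ast(0)-x_{\frac12}(0))\rho_L+\rho_L v^\ast\Delta t-f^\rho$, and symmetrically $\rho_{R,\mathrm{tot}}(\Delta t)=(x_{n+\frac12}(0)-x^\ast(0))\rho_R-\rho_R v^\ast\Delta t+f^\rho$; the boundary fluxes $\rho_L v^\ast\Delta t$ and $\rho_R v^\ast\Delta t$ are still read off from the uniform far-field states on a sufficiently fine grid, and \emph{total} mass conservation is precisely what forces the \emph{same} $f^\rho$ to appear in both balances, since the mass leaving material~$1$ must equal the mass entering material~$2$ (nothing being created at the single interface point). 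In the same way, total energy conservation yields a common interface energy exchange $f^E$. Propagating $f^\rho$ through the algebra that produced \eqref{equ:fE_final1}, the kinetic term $\tfrac12(v^\ast)^2\rho_{L,\mathrm{tot}}(\Delta t)$ now carries an extra $\tfrac12(v^\ast)^2 f^\rho$, giving
\[
	\frac{f^E}{\Delta t}=v^\ast p^\ast+A_L\!\left(v^\ast-\frac{x^\ast(\Delta t)-x^\ast(0)}{\Delta t}\right)+\frac12(v^\ast)^2\frac{f^\rho}{\Delta t},
\]
and the analogous material-$2$ computation gives the identical relation with $A_L$ replaced by $A_R:=\tfrac{p^\ast+\gamma_R B_R}{\gamma_R-1}$.

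Subtracting the two relations, the terms $v^\ast p^\ast$ and $\tfrac12(v^\ast)^2 f^\rho/\Delta t$ cancel --- this cancellation succeeds precisely because the two materials share the \emph{same} equilibrium velocity $v^\ast$, so the kinetic correction enters both balances with the same coefficient --- leaving exactly identity \eqref{equ: gprnotconserv_equ}. Since $p^\ast,\gamma_L,B_L,\gamma_R,B_R$ are arbitrary, that identity can hold only if $x^\ast(\Delta t)=x^\ast(0)+v^\ast\Delta t$ exactly, so an arbitrarily small perturbation of the captured interface destroys pressure-equilibrium preservation and the scheme fails to be GPR. I expect the only real obstacle to be the careful bookkeeping of the exchange terms: one must argue that total mass (respectively total energy) conservation legitimately identifies the material-$1$ outflow with the material-$2$ inflow, so that a single $f^\rho$ (respectively $f^E$) figures in both material balances; once that is secured, the cancellation is automatic and the remainder is the same rearrangement as in Theorem~\ref{thm:connotGPR}.
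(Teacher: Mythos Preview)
Your proposal is correct and follows precisely the route the paper intends: the paper's own proof of this proposition is a one-line remark that ``one can still derive equation~\eqref{equ: gprnotconserv_equ} by an argument similar to that used in Theorem~\ref{thm:connotGPR},'' and your introduction of the inter-material mass exchange $f^\rho$ (forced to be common to both balances by total-mass conservation) together with the observation that the extra kinetic term $\tfrac12(v^\ast)^2 f^\rho/\Delta t$ cancels upon subtraction is exactly the missing detail. The algebra you sketch is correct, and your remark that momentum conservation was never used in Theorem~\ref{thm:connotGPR} is also on point.
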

Theorem \ref{thm:connotGPR} and Proposition \ref{prop:connotGPR_relax} reveal a fundamental incompatibility between conservation and pressure equilibrium preservation. This conflict originates from discontinuities in the EOS parameters
$(\gamma,B)$. By contrast, single‑material flows automatically satisfy equation~\eqref{equ: gprnotconserv_equ}, and thus no GPR--conservation conflict arises in that case.

\section{A GPR cut-cell scheme for two-material flows}\label{sec:framework}
In this section, we develop a novel cut-cell scheme for two-dimensional, two-material flows that is not only GPR but also genuinely high-order. For the reader's convenience, the complete computational workflow of the GPR cut‑cell scheme is presented in Figure~\ref{fig:precise} of Section~\ref{subsec:whole GPR}.
Owing to its GPR property, the proposed method integrates seamlessly with a variety of interface-capturing and interface‑cell generation methods. For concreteness, we illustrate the framework using the classical WENO-JS level set method~\cite{jiangwenojshamilton,GIBOU201882} together with the cell-cutting and merging algorithm proposed by Lin et al. \cite{lin2017simulation}.

The proposed cut-cell framework is based on a finite volume method. For convenience, we henceforth denote the spatial coordinate by \(\mathbf{x}=(x,y)\). As illustrated in Figure \ref{fig:face_rec}, the computational domain is discretized by a uniform Cartesian grid, where each cell is denoted by $I_{i,j}=\left[x_{i-\frac12},x_{i+\frac12}\right]\times\left[y_{j-\frac12},y_{j+\frac12}\right]$. The mesh is equispaced in each direction, i.e., $x_{i+\frac12}-x_{i-\frac12}=\Delta x$ and $y_{j+\frac12}-y_{j-\frac12}=\Delta y$.

\begin{figure}[!thbp]
	\centering
	\captionsetup[subfigure]{labelformat=empty}
	\includegraphics[width=0.9\textwidth]{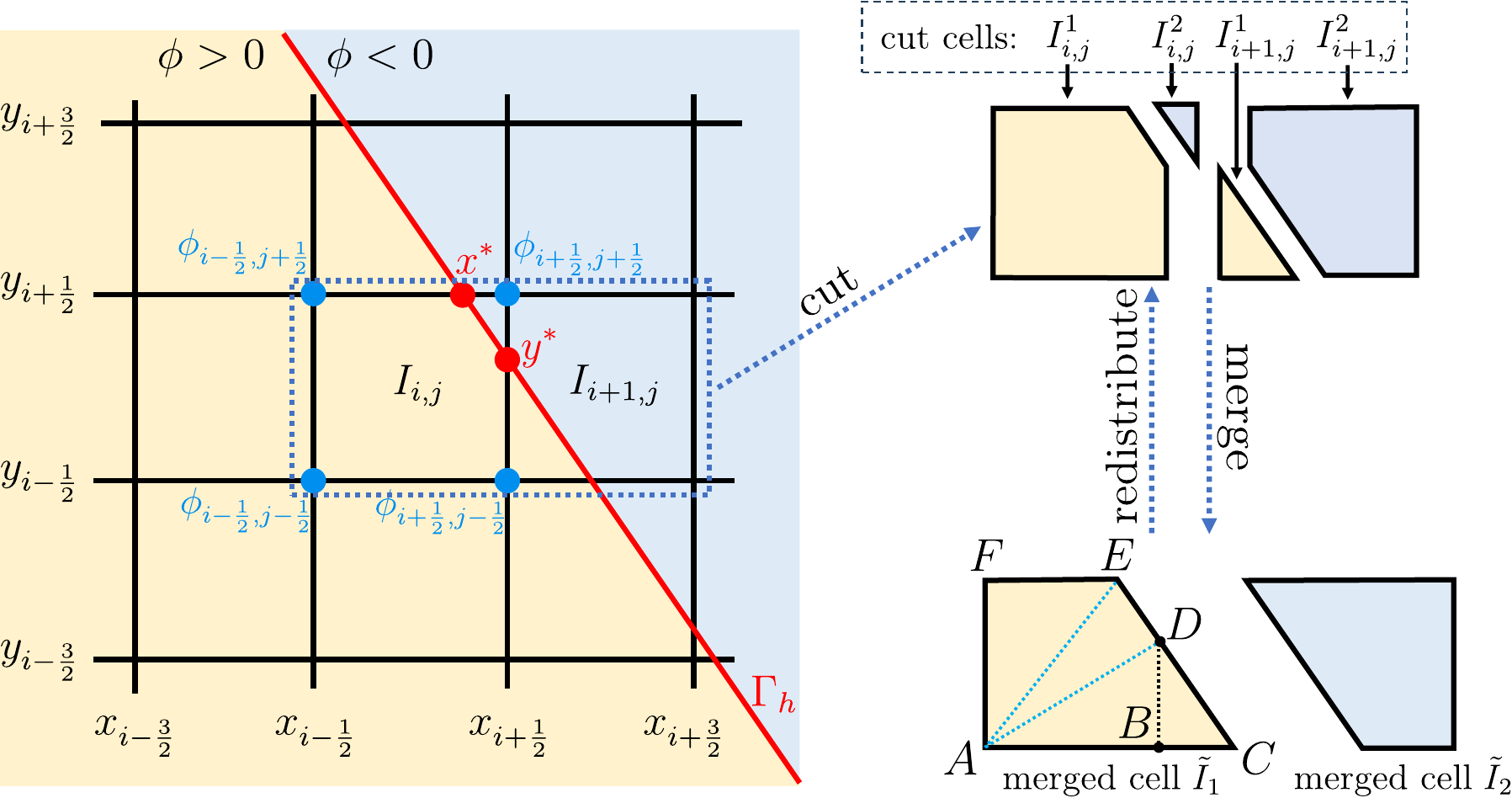}
	\captionsetup{font=small}
	\caption{Partitioning a Cartesian cell into cut cells using the level set function $\phi$, and employing a cell merging technique to solve the small cell problem.}\label{fig:face_rec}
\end{figure}


\subsection{Level‑set method and interface‑cell handling}
In this work, the moving interface \(\Gamma\) is tracked by the level‑set equation
\begin{equation}\label{equ:levelset}
	\phi_t + \mathbf{v}\!\cdot\!\nabla\phi = 0,
\end{equation}
where \(\phi\) is initialized as the signed‑distance function (\(\lvert\nabla\phi\rvert=1\)),
positive in material 1 and negative in material 2.  For convenience, \(\phi\) is stored at
the cell vertices \((x_{i\pm\frac12},y_{j\pm\frac12})\).  Equation \eqref{equ:levelset}
is discretized with the classical WENO–JS scheme~\cite{jiangwenojshamilton}, and vertex
velocities are obtained via WENO reconstruction (see Section \ref{sec:EC-MRWENO}).  
Alternative solvers may be substituted without loss of generality.

The zero‑level contour of \(\phi\) defines the numerical interface \(\Gamma_h\). For example, in Figure \ref{fig:face_rec}, the right and upper boundaries of cell $I_{i,j}$ are intersected by the interface. We simply use linear interpolation to determine the cut-point coordinates \cite{hu2006conservative}:
\begin{equation}\label{equ:lsm recon}
	x^* = \frac{x_{i-\frac12}\,\phi_{i+\frac12,j+\frac12} - x_{i+\frac12}\,\phi_{i-\frac12,j+\frac12}}
	{\phi_{i+\frac12,j+\frac12}-\phi_{i-\frac12,j+\frac12}},  
	\quad
	y^* = \frac{y_{j-\frac12}\,\phi_{i+\frac12,j+\frac12} - y_{j+\frac12}\,\phi_{i+\frac12,j-\frac12}}
	{\phi_{i+\frac12,j+\frac12}-\phi_{i+\frac12,j-\frac12}}.
\end{equation}
These coordinates, $x^*$ and $y^*$, are then used to reconstruct the linear interface within $I_{i,j}$, thereby partitioning the Cartesian cell $I_{i,j}$ into two cut cells, $I_{i,j}^1$ and $I_{i,j}^2$, each dominated by a single material.

To avoid excessively small cut‑cells, an undersized cell is merged with its neighbor along the interface normal. In Figure~\ref{fig:face_rec}, for instance, cut‑cells $I_{i,j}^1$ and $I_{i+1,j}^1$ are combined into the merged cell $\tilde I_1$, while $I_{i,j}^2$ and $I_{i+1,j}^2$ form $\tilde I_2$. Throughout the remainder of this paper, the symbol $\tilde I_i$ denotes the $i$‑th merged cell. While numerous cutting
and merging algorithms are available, the GPR cut‑cell framework is agnostic to the specific choice; here, the method of Lin et al. \cite{lin2017simulation} is used for illustration.

Since \(\phi\) gradually drifts from a signed‑distance function under advection,
periodic reinitialization is required to restore \(\lvert\nabla\phi\rvert=1\).
This work adopts the WENO–JS reinitialization procedure \cite{jiangwenojshamilton,zheng2021high}.

\subsection{Geometric moments on a diet:  matching ``inexact'' geometry to an inexact solution}
\subsubsection{Reconstructed geometric moments are inconsistent}
The integral equations equivalent to Eq.~\eqref{equ:euler} read
\begin{equation}\label{equ:euler_int form}
	\frac{\mathrm{d}}{\mathrm{d}t}\int_{\Omega(t)} \mathbf{U}\, \mathrm{d}x\mathrm{d}y + \int_{\partial\Omega(t)} \Big[\mathbf{F}(\mathbf{U}) - \mathbf{U}\cdot\mathbf{v}_b\Big]\cdot\mathbf{n}\, \mathrm{d}S = 0,
\end{equation}
where $\Omega$ denotes a control volume, $\partial \Omega$ its boundary, $\mathbf{n}=(n_x,n_y)^\top$ the unit outward normal on $\partial \Omega$, and \(\mathbf{v}_b\) the velocity of $\partial \Omega$. When employing the finite volume method to solve equations \eqref{equ:euler}, we are essentially solving the above integral equations, with the total (integrated) conserved quantity within each cell, $\mathbf{U}^{\mathrm{tot}}=\int \mathbf{U}\, \mathrm{d}x\mathrm{d}y,$
serving as the evolving variables. 

For fixed-mesh schemes, $\mathbf{U}^{\mathrm{tot}}$ can be rewritten directly in terms of cell averages.  
However, when the mesh itself moves we can only monitor $\mathbf{U}^{\mathrm{tot}}$. Consequently, a crucial step in finite volume computations is reconstructing point values (or averages) from the numerical total conserved quantities $\mathbf{U}_h^{\mathrm{tot}}$ in each cell. As an example, the WENO reconstruction (detailed in Section \ref{sec:WENO}) seeks a polynomial
$p(x,y)=\sum_{s,r} a_{s,r}x^{s}y^{r}$ (where $a_{s,r}\in\mathbb{R}$)
such that the total integral of $p(x,y)$ over a given control
volume~$\Omega(t)$ matches the cell‐integrated conserved variable:
\[
\int_{\Omega(t)} p(x,y)\,\mathrm{d}x\mathrm{d}y
\!=\!
\sum_{s,r} \left(a_{s,r}\!\int_{\Omega(t)} \!x^{s}y^{r}\!\mathrm{d}x\mathrm{d}y\!\right)
\!=\!
\int_{\Omega(t)} (\mathbf U)_k\,\mathrm{d}x\,\mathrm{d}y
\!=\!
(\mathbf{U}^{\mathrm{tot}}_h)_k,
\quad 1\le k\le 4 .
\]

The crucial ingredients are therefore the \emph{high-order geometric
	moments}
\begin{equation}\label{def:geo moment M}
	\mathcal{M}_{s,r}(t)
	\;=\;
	\int_{\Omega(t)} x^{s}y^{r}\,\mathrm{d}x\,\mathrm{d}y,
\end{equation}
which must be calculated for all required
indices~$(s,r)$.

In conventional schemes, the geometric moment \(\mathcal{M}_{s,r}\) is often computed directly from the cell geometry. For example, for the merged cell \(\tilde I_1\) shown in Figure~\ref{fig:face_rec}, one evaluates \(\mathcal{M}_{s,r}\) by integrating the monomial \(x^{s}y^{r}\) over the polygon \(ABCDEF\). Here, \(A\), \(B\), and \(F\) are fixed Cartesian vertices, while \(C\), \(D\), and \(E\) lie on the interface and are reconstructed from the level set function (with Eq.~\eqref{equ:lsm recon}). In practice, one may subdivide the polygon into triangles---for example,\ \(\triangle ABD\), \(\triangle ADE\), \(\triangle AEF\), and \(\triangle BCD\)---and apply a sufficiently accurate Gaussian quadrature on each triangle to calculate the integrals of \(x^{s}y^{r}\). Beyond direct volume integration, one may also invoke the divergence theorem to transform these into boundary integrals, as in the work of Evrard et al. \cite{Evrard20023divergence,wasquel2025first}. We refer to the resulting high-order moments as the \emph{reconstructed geometric moments}, denoted by \(\mathcal{M}_{s,r}^{\mathrm{rec}}\).


Although the purely geometric reconstruction of $\mathcal{M}_{s,r}^{\mathrm{rec}}$ can be made arbitrarily accurate, it does not ensure that these reconstructed moments evolve in step with $\mathbf{U}^{\mathrm{tot}}$: using $\mathcal{M}_{s,r}^{\mathrm{rec}}$ does not naturally preserve pressure equilibrium. Such inconsistency can also degrade
accuracy or even destroy convergence in the cut‑cell scheme (see
Test \ref{sec:test:smooth_single} in Section~\ref{sec:numerical_tests}). The problem is not limited to cut‑cell schemes; it also appears in other moving‑mesh methods (e.g.\ Arbitrary Lagrangian--Eulerian method) and becomes especially pronounced under large mesh deformations (expansion, contraction, or twisting). Although the geometric conservation law (GCL) \cite{ThomasLombard1979} can alleviate this inconsistency, the classical GCL enforces consistency only for the zeroth‑order moment (i.e., the volume $\mathcal{M}_{0,0}$) and overlooks all higher‑order geometric moments $\mathcal{M}_{s,r}$ with $s+r>0$.

\subsubsection{Consistent \emph{evolved geometric moments} via auxiliary transport equations}

Our remedy is to allow the geometry to be just as ``inexact'' as the discrete solution. We treat every geometric moment
$\mathcal{M}_{s,r}(t)=\int_{\Omega(t)}x^{s}y^{r}\,\mathrm{d} x\mathrm{d}y$
as a time-dependent unknown and evolve it:
\begin{equation}\label{equ:high order gcl}
	\frac{\mathrm{d}\mathcal{M}_{s,r}}{\mathrm{d}t}
	=\frac{\mathrm{d}}{\mathrm{d}t}\int_{\Omega(t)}x^{s}y^{r}\,\mathrm{d}x\mathrm{d}y
	=\int_{\partial\Omega(t)}x^{s}y^{r}
	(\mathbf v_b\,\cdot\,\mathbf n)\,\mathrm{d}S\xlongequal{v_\perp:=\mathbf v_b\,\cdot\,\mathbf n}\int_{\partial\Omega(t)}x^{s}y^{r}\,v_\perp\,\mathrm{d}S,
\end{equation}
where Reynolds' transport theorem is applied.
Geometric moments produced by discretizing the auxiliary transport equations \eqref{equ:high order gcl} are called \emph{\textit{evolved geometric moments}},
written $\mathcal{M}_{s,r}^{\mathrm{evo}}$. Notably, when \(s=r=0\), equation~\eqref{equ:high order gcl} reduces to the
classical GCL.

To demonstrate that the evolved geometric moments can be consistent with the numerical solution, we consider the trivial equation $u_t=0$ in its integral form 
\begin{equation}\label{equ:ut=0_int_equ}
	\frac{\mathrm{d}}{\mathrm{d}t}\int_{\Omega(t)}u\,\mathrm{d}x\,\mathrm{d}y
	\;=\;
	\int_{\partial\Omega(t)}u\,(\mathbf{v}_b\cdot\mathbf{n})\,\mathrm{d}S,
\end{equation}
which is structurally identical to Eq.~\eqref{equ:high order gcl}. When discretized in a finite‑volume framework, the evolving variable is the total conserved quantity $u^{\mathrm{tot}}
\;=\;
\int u\,\mathrm{d}x\,\mathrm{d}y$ in each cell.
The following theorem demonstrates that, provided Eqs.~\eqref{equ:high order gcl}--\eqref{equ:ut=0_int_equ} employ the same \emph{linear} spatial discretization operator, any moving‑mesh scheme transports polynomials of \emph{arbitrary degree} exactly, even when using a first‑order forward Euler method:

\begin{theorem}[Polynomial exactness with $\mathcal{M}_{s,r}^{\mathrm{evo}}$]\label{thm:poly-exact}
	Let $\{I_i(t)\}_{i=1}^{N}$ be the moving control volumes of a finite‑volume scheme, whose cell boundaries move with a prescribed velocity. Assume the \textbf{same linear} spatial operator $\tilde{\mathcal L}(\,\cdot\,)$ is applied to advance both the transport
	equation \eqref{equ:ut=0_int_equ} and the geometric‑moment equations
	\eqref{equ:high order gcl} with a forward-Euler method
	\[
	(\cdot)^{n+1}
	\;=\;
	(\cdot)^{n} + \Delta t\,\tilde{\mathcal L}\!\bigl((\cdot)^{n}\bigr).
	\]
	If, at time $t^{n}$, the solution is the polynomial
	$u(x,y)=\sum_{s,r} a_{s,r}\,x^{s}y^{r}$, then each cell $I_i(t^{n})$
	satisfies
	\[
	u^{\mathrm{tot},n}_h
	= 
	\sum_{s,r} a_{s,r}\,
	\mathcal M_{s,r}^{\mathrm{evo},n}
	\left(=\int_{I_i(t^n)} \sum_{s,r} a_{s,r}\,x^s y^r \,\mathrm dx\mathrm dy
	\right),
	\]
	and the same identity holds at $t^{n+1}=t^{n}+\Delta t$:
	\begin{equation}\label{equ: exact-poly}
		u^{\mathrm{tot},n+1}_h
		=
		\sum_{s,r} a_{s,r}\,
		\mathcal M_{s,r}^{\mathrm{evo},n+1}
		\left(=\int_{I_i(t^{n+1})} \sum_{s,r} a_{s,r}\,x^s y^r \,\mathrm dx\mathrm dy
		\right).
	\end{equation}
	Hence the scheme transports polynomials of \textbf{arbitrary degree}
	exactly.
\end{theorem}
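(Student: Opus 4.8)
The plan is to argue by induction on the time level $n$, with the whole argument resting on two structural facts: (i) $\tilde{\mathcal L}$ is \emph{linear} in its data argument; and (ii) it is \emph{literally the same} operator---same moving cells $\{I_i(t^n)\}$, same boundary velocity $\mathbf v_b$ (hence the same $v_\perp$), same stencils, same (linear) reconstruction, same numerical-flux convention, and same boundary treatment---that advances both the transport equation~\eqref{equ:ut=0_int_equ} and every moment equation~\eqref{equ:high order gcl}. For the base case I would use the natural initialization $u^{\mathrm{tot},0}_{h,i}:=\int_{I_i(0)}u\,\mathrm dx\mathrm dy$ and $\mathcal M^{\mathrm{evo},0}_{s,r,i}:=\int_{I_i(0)}x^sy^r\,\mathrm dx\mathrm dy$ in every cell, which makes $u^{\mathrm{tot},0}_{h,i}=\sum_{s,r}a_{s,r}\,\mathcal M^{\mathrm{evo},0}_{s,r,i}=\int_{I_i(0)}u$ hold cell by cell, so the claimed identity (together with its parenthetical form) is true at $n=0$.

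For the inductive step, assume $u^{\mathrm{tot},n}_{h,i}=\sum_{s,r}a_{s,r}\,\mathcal M^{\mathrm{evo},n}_{s,r,i}$ in every cell $I_i$; this is precisely what it means for ``the discrete solution to be the polynomial $u$'' on the deformed mesh $\{I_i(t^n)\}$. Writing $\tilde{\mathcal L}_i$ for the $i$-th component of $\tilde{\mathcal L}$, the two forward-Euler updates read
\[
u^{\mathrm{tot},n+1}_{h,i}=u^{\mathrm{tot},n}_{h,i}+\Delta t\,\tilde{\mathcal L}_i\!\bigl(\{u^{\mathrm{tot},n}_{h,j}\}_j\bigr),\qquad
\mathcal M^{\mathrm{evo},n+1}_{s,r,i}=\mathcal M^{\mathrm{evo},n}_{s,r,i}+\Delta t\,\tilde{\mathcal L}_i\!\bigl(\{\mathcal M^{\mathrm{evo},n}_{s,r,j}\}_j\bigr).
\]
Substituting the inductive hypothesis into the left-hand update and invoking linearity of $\tilde{\mathcal L}_i$,
\[
\tilde{\mathcal L}_i\!\bigl(\{u^{\mathrm{tot},n}_{h,j}\}_j\bigr)=\tilde{\mathcal L}_i\!\Bigl(\bigl\{\sum_{s,r}a_{s,r}\mathcal M^{\mathrm{evo},n}_{s,r,j}\bigr\}_j\Bigr)=\sum_{s,r}a_{s,r}\,\tilde{\mathcal L}_i\!\bigl(\{\mathcal M^{\mathrm{evo},n}_{s,r,j}\}_j\bigr),
\]
whence, collecting terms and using the right-hand update,
\[
u^{\mathrm{tot},n+1}_{h,i}=\sum_{s,r}a_{s,r}\Bigl(\mathcal M^{\mathrm{evo},n}_{s,r,i}+\Delta t\,\tilde{\mathcal L}_i\!\bigl(\{\mathcal M^{\mathrm{evo},n}_{s,r,j}\}_j\bigr)\Bigr)=\sum_{s,r}a_{s,r}\,\mathcal M^{\mathrm{evo},n+1}_{s,r,i},
\]
which is exactly~\eqref{equ: exact-poly}. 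The same computation goes through verbatim for an arbitrary finite collection of monomials, so no bound on the polynomial degree is ever needed.

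Two comments on why the result is at once easy and a little subtle. The conceptual crux---the reason ``arbitrary degree'' is legitimate---is that the argument never requires the (possibly low-order) reconstruction buried inside $\tilde{\mathcal L}$ to \emph{reproduce} the monomials $x^sy^r$: a degree-$k$ linear reconstruction may commit whatever approximation error it commits on higher-degree data, but it commits that error \emph{identically} on $\{u^{\mathrm{tot},n}_{h,j}\}$ and on each $\{\mathcal M^{\mathrm{evo},n}_{s,r,j}\}$, and linear superposition takes care of the rest. Second, the parenthetical identities $\sum_{s,r}a_{s,r}\mathcal M^{\mathrm{evo},n}_{s,r,i}=\int_{I_i(t^n)}u$ continue to hold for $n\ge1$ precisely when each $\mathcal M_{s,r}$-update reproduces the exact increment $\int_{I_i(t^{n+1})}x^sy^r\,\mathrm dx\mathrm dy-\int_{I_i(t^n)}x^sy^r\,\mathrm dx\mathrm dy$ over a step---for instance by integrating the geometric boundary term exactly over the swept region, the same device that forces the $s=r=0$ component to satisfy the classical GCL---and then the flow-variable statement inherits exactness automatically, again by linearity and ``same operator''. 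I expect the only part that genuinely demands care in a full write-up is the bookkeeping certifying fact~(ii): one must verify that stencils, mesh velocities, flux choice, boundary handling, and reconstruction coefficients are all shared, so that $\tilde{\mathcal L}_i$ really is a single linear map evaluated on two data vectors rather than two deceptively similar maps.
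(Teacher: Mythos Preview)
Your proof is correct and follows essentially the same approach as the paper: substitute the hypothesis $u^{\mathrm{tot},n}_h=\sum_{s,r}a_{s,r}\mathcal M^{\mathrm{evo},n}_{s,r}$ into the forward-Euler update, invoke linearity of $\tilde{\mathcal L}$ to distribute over the sum, and collect terms to recover the identity at $t^{n+1}$. Your framing as induction on $n$ and the extra commentary on why arbitrary degree is legitimate (and on the parenthetical exact-integral identities) are welcome elaborations, but the core one-step argument is identical to the paper's.
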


\begin{proof}
	Using the forward Euler step on $u^{\mathrm{tot}}_h$, linearity of $\tilde{\mathcal L}$ gives
	\begin{align*}
		u^{\mathrm{tot},n+1}_h
		&= u^{\mathrm{tot},n}_h
		+ \Delta t\,\tilde{\mathcal L}\bigl(u^{\mathrm{tot},n}_h\bigr)\\
		&= \sum_{s,r}a_{s,r}\,\mathcal{M}^{\mathrm{evo}}_{s,r}(t^n)
		+ \Delta t\,\tilde{\mathcal L}\!\Bigl(\sum_{s,r}a_{s,r}\,\mathcal{M}^{\mathrm{evo},n}_{s,r}\Bigr)\\
		&= \sum_{s,r}a_{s,r}\,\mathcal{M}^{\mathrm{evo},n}_{s,r}
		+ \Delta t\,\sum_{s,r}a_{s,r}\,\tilde{\mathcal L}\bigl(\mathcal{M}^{\mathrm{evo},n}_{s,r}\bigr)\\
		&= \sum_{s,r}a_{s,r}\,\mathcal{M}^{\mathrm{evo},n+1}_{s,r},
	\end{align*}
	which completes the proof.
\end{proof}

Theorem \ref{thm:poly-exact} holds not only for the forward Euler method but also for general strong‐stability‐preserving Runge--Kutta (SSPRK) time discretization schemes \cite{shu1989efficient}.

\begin{proposition}\label{prop:ssprk3_poly_exact}
	If, in Theorem \ref{thm:poly-exact}, the forward Euler method is replaced by any SSPRK time discretization scheme, then the conclusion of exact polynomial transport still holds.
\end{proposition}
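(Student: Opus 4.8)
The plan is to exploit the structure of SSPRK methods: in Shu--Osher form, an $L$-stage SSPRK scheme advances the solution through intermediate stages $u^{(0)},u^{(1)},\dots,u^{(L)}$, each of which is an affine combination of forward-Euler substeps,
\[
u^{(0)}=u^{n},\qquad
u^{(\ell)}=\sum_{k=0}^{\ell-1}\Bigl(\alpha_{\ell,k}\,u^{(k)}+\beta_{\ell,k}\,\Delta t\,\tilde{\mathcal L}(u^{(k)})\Bigr),\qquad
u^{n+1}=u^{(L)},
\]
with $\sum_{k}\alpha_{\ell,k}=1$. The key observation is that the evolved geometric moments $\mathcal M_{s,r}^{\mathrm{evo}}$ are advanced by \emph{exactly the same} scheme: the same coefficients $\alpha_{\ell,k},\beta_{\ell,k}$, the same mesh motion at each internal stage, and the same \emph{linear} operator $\tilde{\mathcal L}$. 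Since Theorem~\ref{thm:poly-exact} already establishes the identity $u^{\mathrm{tot}}_h=\sum_{s,r}a_{s,r}\mathcal M_{s,r}^{\mathrm{evo}}$ after a single forward-Euler step, it suffices to show that this identity is inherited by every stage.

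I would prove this by induction on the stage index $\ell$. For $\ell=0$ the claim $u^{(0),\mathrm{tot}}_h=\sum_{s,r}a_{s,r}\mathcal M^{(0)}_{s,r}$ is precisely the hypothesis at time $t^{n}$. For the inductive step, assume $u^{(k),\mathrm{tot}}_h=\sum_{s,r}a_{s,r}\mathcal M^{(k)}_{s,r}$ for all $k<\ell$; substituting this into the $\ell$-th stage formula and using only linearity of $\tilde{\mathcal L}$ together with the interchangeability of the finite sums $\sum_k$ and $\sum_{s,r}$, one pulls $\sum_{s,r}a_{s,r}$ outside and recognizes the bracketed quantity as the $\ell$-th stage update applied to $\mathcal M^{(k)}_{s,r}$, i.e.\ $\mathcal M^{(\ell)}_{s,r}$; hence $u^{(\ell),\mathrm{tot}}_h=\sum_{s,r}a_{s,r}\mathcal M^{(\ell)}_{s,r}$. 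Setting $\ell=L$ yields the identity at $t^{n+1}$, and the geometric interpretation (the parenthetical in Theorem~\ref{thm:poly-exact}) follows as before, since the discrete geometric-moment update is, stagewise, consistent with the corresponding intermediate mesh configuration.

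The computation is the same short manipulation as in the proof of Theorem~\ref{thm:poly-exact}, repeated once per stage, so there is no essential obstacle --- only bookkeeping. Two points deserve emphasis in the writeup. First, the argument never uses the nonnegativity of the $\alpha_{\ell,k}$ or the SSP property itself: it goes through verbatim for \emph{any} Runge--Kutta method expressible in Shu--Osher form with a single linear spatial operator, the SSP structure being relevant only for nonlinear stability, not for polynomial exactness. Second, one must use the \emph{same} mesh-velocity data --- hence the same $\tilde{\mathcal L}$ --- in the moment update at every internal stage and for every sub-time; this is already implicit in the hypothesis of Theorem~\ref{thm:poly-exact} that ``the same linear spatial operator'' is applied to both equations, but it is worth making explicit as a remark, since in practice the mesh positions at the intermediate stages are precisely what the evolved-moment equations \eqref{equ:high order gcl} track.
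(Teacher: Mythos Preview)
Your proposal is correct and follows essentially the same approach as the paper: both invoke the Shu--Osher representation of SSPRK methods as (convex) combinations of forward-Euler substeps and then appeal to Theorem~\ref{thm:poly-exact} stagewise, with your version simply spelling out the induction that the paper leaves implicit. Your remark that the nonnegativity of the $\alpha_{\ell,k}$ is irrelevant here---only linearity of $\tilde{\mathcal L}$ and the common Shu--Osher structure matter---is a valid sharpening that the paper does not make explicit.
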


\begin{proof}
	Since an SSPRK time discretization can be expressed as a convex combination of forward Euler steps \cite{shu1989efficient}, the proof is analogous to that of
	Theorem \ref{thm:poly-exact}.
\end{proof}

Proposition \ref{prop:ssprk3_poly_exact} guarantees that the polynomial information is preserved exactly; with a suitable reconstruction algorithm, the original polynomial can be fully recovered. Note that the validity of Proposition \ref{prop:ssprk3_poly_exact} is independent of mesh quality or the specifics of the spatial discretization. Therefore, by employing the evolved geometric moments, any moving‑mesh scheme ensures consistency between the numerical solution and the conserved variables---even under large mesh deformations.

\subsubsection{Discretization of \textit{evolved geometric moments} in the present cut-cell scheme}\label{subsec:egm-discrete}
We now return to the Euler equations~\eqref{equ:euler_int form}.  
In our moving-interface cut-cell framework, the material interface travels with the local fluid
velocity; consequently the normal speed appearing in
\eqref{equ:high order gcl} satisfies
$v_{\perp}=\mathbf v \cdot \mathbf n$. 	Motivated by Theorem~\ref{thm:poly-exact}, we require that the
\emph{spatial discretization of the geometric moment equation
	\eqref{equ:high order gcl} be consistent with that of Euler equations}~\eqref{equ:euler_int form}.  
Such compatibility guarantees that the evolved geometric moments and the
finite‐volume solution advance coherently, an ingredient that will
prove indispensable for pressure-equilibrium preservation
(see Theorem~\ref{thm:eulerforward pe} in Section \ref{sec:euler_pe}).

\begin{figure}[!thbp]
	\centering
	\captionsetup[subfigure]{labelformat=empty}
	\includegraphics[width=0.6\textwidth]{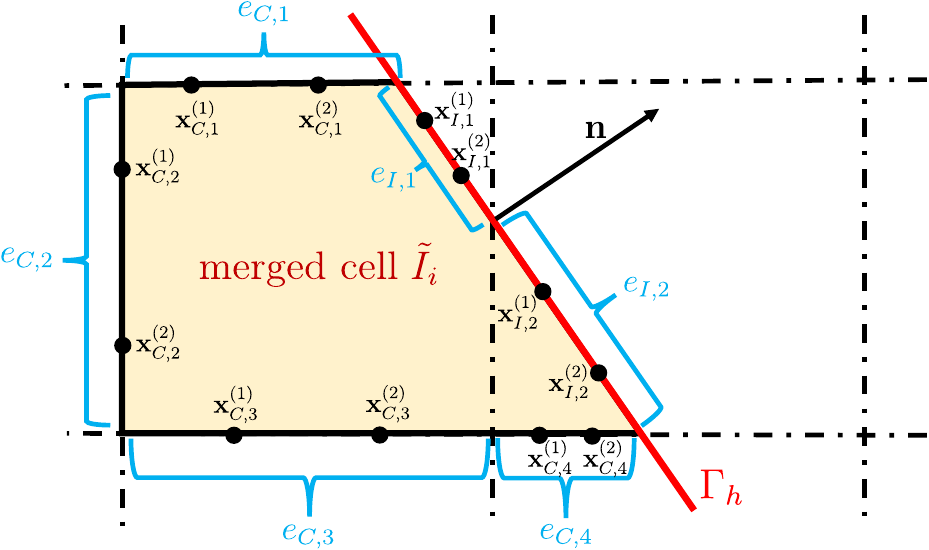}
	\captionsetup{font=small}
	\caption{The boundary integrals for the merged cells are computed using piecewise Gaussian quadrature; the black dots denote the Gauss points along each segment, and the red line represents the interface between the materials.}\label{fig:face_flux}
\end{figure}

We now present our spatial discretization for
equations~\eqref{equ:high order gcl}. As shown in Figure \ref{fig:face_flux},
\[
\mathcal{E}_C(\tilde{I}_i) = \{\,e_{C,1},\dots,e_{C,N_C}\}
\quad\text{and}\quad
\mathcal{E}_I(\tilde{I}_i) = \{\,e_{I,1},\dots,e_{I,N_I}\}
\]
denote the sets of Cartesian‐edge segments and interface‐edge segments of the merged cell $\tilde{I}_i$, respectively. We simply place 2 Gauss (quadrature) points \(\{\,\mathbf{x}_{C,\alpha}^{(1)},\mathbf{x}_{C,\alpha}^{(2)}\}\) on each Cartesian‐edge segment $e_{C,\alpha}\in\mathcal{E}_C$, 
and 2 Gauss points \(\{\,\mathbf{x}_{I,\beta}^{(1)},\mathbf{x}_{I,\beta}^{(2)}\}\) on each interface segment $e_{I,\beta}\in\mathcal{E}_I$. For further details on Gaussian quadrature, see \cite{quarteroni2006numerical}.

In merged cell \(\tilde I_i\), the Cartesian‐edge segments are either remain fixed or move strictly tangentially ($v_\perp=\mathbf v_b\cdot\mathbf n=0$), whereas the interface‐edge segments move with the interface. Consequently, the normal velocity \(v_\perp\) vanishes on every Cartesian‐edge segments, only the interface‐edge segments contribute to the boundary integral in Eq.~\eqref{equ:high order gcl}. Accordingly, we employ the following piecewise Gaussian quadrature to discretize equations \eqref{equ:high order gcl} in space:
\begin{equation}\label{equ:semidisc_M}
	\frac{\mathrm{d}\mathcal{M}_{s,r}^{\mathrm{evo}}}{\mathrm{d}t}
	\;\approx\;
	\mathcal{L}\bigl(\mathcal{M}_{s,r}^{\mathrm{evo}}\bigr)\;:=\;\sum_{\beta=1}^{N_I}
	\sum_{\ell=1}^{2}
	\bigl(x_{I,\beta}^{(\ell)}\bigr)^s \bigl(y_{I,\beta}^{(\ell)}\bigr)^r
	\bigl|e(\mathbf{x}_{I,\beta}^{(\ell)}\bigr)\bigr|\,
	\omega\bigl(\mathbf{x}_{I,\beta}^{(\ell)}\bigr)\,
	v_\perp\bigl(\mathbf{x}_{I,\beta}^{(\ell)}\bigr),
\end{equation}
where:
\begin{itemize}
	\item \(\mathbf{x}_{I,\beta}^{(\ell)} = \bigl(x_{I,\beta}^{(\ell)},\,y_{C,\alpha}^{(\ell)}\bigr)\) (or \(\mathbf{x}_{C,\alpha}^{(\ell)} = \bigl(x_{C,\alpha}^{(\ell)},\,y_{C,\alpha}^{(\ell)}\bigr)\)) denotes the $\ell$-th Gauss point on the interface‐edge segment \(e_{I,\beta}\) (or Cartesian‐edge segment \(e_{C,\alpha}\));
	\item \(\omega(\mathbf{x}_{I,\beta}^{(\ell)})\) (or \(\omega(\mathbf{x}_{C,\alpha}^{(\ell)})\)) is the Gaussian quadrature weight at Gauss point \(\mathbf{x}_{I,\beta}^{(\ell)}\) (or \(\mathbf{x}_{C,\alpha}^{(\ell)}\));  
	\item \(\lvert e(\mathbf{x}_{I,\beta}^{(\ell)}\bigr)\rvert\) (or \(\lvert e(\mathbf{x}_{C,\alpha}^{(\ell)}\bigr)\rvert\)) denotes the length of the segment on which the Gauss point \(\mathbf{x}_{I,\beta}^{(\ell)}\) (or \(\mathbf{x}_{C,\alpha}^{(\ell)}\)) lies; and
	\item \(v_\perp(\mathbf{x}_{I,\beta}^{(\ell)})\) is the normal velocity at the interface‐edge Gauss point \(\mathbf{x}_{I,\beta}^{(\ell)}\). The computation of \(v_\perp(\mathbf{x}_{I,\beta}^{(\ell)})\) is detailed in Section~\ref{subsec:semidisc-num}.
\end{itemize}


The semi‐discrete scheme \eqref{equ:semidisc_M} is compatible with the semi‐discrete scheme of $\mathbf{U}^\text{tot}$ (Eq.~\eqref{equ:semidiscre-totalU} in Section \ref{subsec:semidisc-num}), thus ensuring that $\mathcal{M}_{s,r}^{\text{evo}}$ remain fully consistent with the numerical conserved variables $\mathbf{U}_h^{\mathrm{tot}}$. Theorem \ref{thm:eulerforward pe} demonstrates that reconstruction based on these evolved moments preserves discrete pressure equilibrium. Furthermore, Test \ref{sec:test:smooth_single} (Section \ref{sec:numerical_tests}) confirms that incorporating $\mathcal{M}_{s,r}^{\text{evo}}$ increases the cut‑cell scheme to higher‑order accuracy, underscoring the consistency between $\mathcal{M}_{s,r}^{\text{evo}}$ and $\mathbf{U}_h^{\mathrm{tot}}$.

\begin{remark}
	In practical computations, the geometric moments $\mathcal{M}_{s,r}^{\text{evo}}$ of Cartesian cells outside the vicinity of the interface remain constant over time and do not require calculating Eq.~$\eqref{equ:semidisc_M}$.
\end{remark}

\subsection{EC reconstruction method}\label{subsec: EC1}
With the evolved geometric moments $\mathcal{M}_{s,r}^{\text{evo}}$ available, one can reconstruct cell averages (and point values) from the cells' total conserved quantities $\mathbf{U}^{\mathrm{tot}}$. To preserve pressure-equilibrium, it is natural to require that the reconstruction method satisfy the following equilibrium-compatible (EC) property:
\begin{definition}[Equilibrium-compatible property]\label{def:EC property}
	A reconstruction method is EC if, whenever all the cell-averaged conserved variables $\overline{\mathbf{U}}=\frac{\mathbf{U}^{\text{tot}}}{\mathcal{M}_{0,0}}$ in the
	reconstruction stencil share a uniform velocity $\mathbf{v}^*$ and pressure $p^*$,
	the reconstructed point values reproduce \emph{exactly} \(\mathbf v^*\) and \(p^*\).
\end{definition}

This property is a crucial ingredient for preserving pressure equilibrium. In Section \ref{sec:WENO}, we propose a simple EC modification for general WENO reconstructions. This modification preserves the original high‑order accuracy and non‑oscillatory behavior of WENO while endowing it with the EC property.

\subsection{Semi-discrete scheme for \(\mathbf{U}^{\mathrm{tot}}=\int \mathbf{U}\, \mathrm{d}x\mathrm{d}y\) in each cell}\label{subsec:semidisc-num}
With the evolved geometric moments \(\mathcal{M}_{s,r}^{\mathrm{evo}}\) and the EC reconstruction in hand, we are prepared to spatially discretize the Euler equations \(\eqref{equ:euler_int form}\). We present only the semi‐discrete formulation for merged cells; the formulation for fixed Cartesian cells is formally identical. Adopting the same notations as in Eq.~\(\eqref{equ:semidisc_M}\), the spatial discretization of Eq.~\(\eqref{equ:euler_int form}\) then proceeds as follows:

\begin{description}
	\item[\textbf{Step 1.}] Within the merged cell \(\tilde I_i\), employ an EC reconstruction to calculate the conserved variables $\mathbf{U}$ at all Gauss points
		\[
		\bigl\{\mathbf x_{C,\alpha}^{(k)} \mid 1\le k\le2,\;1\le\alpha\le N_C\bigr\}
		\quad\text{and}\quad
		\bigl\{\mathbf x_{I,\beta}^{(\ell)} \mid 1\le \ell\le2,\;1\le\beta\le N_I\bigr\},
		\]
		using \(\mathbf U^{\mathrm{tot}}_h\) and \(\mathcal M_{s,r}^{\mathrm{evo}}\) from neighboring cells. For instance, in Figure~\ref{fig:face_flux}, we reconstruct values at 
		\(\mathbf x_{C,\alpha}^{(k)}\) for \(\alpha=1,\dots,4\) and \(k=1,2\),
		and at \(\mathbf x_{I,\beta}^{(\ell)}\) for \(\beta=1,2\) and \(\ell=1,2\).
		
		For brevity, let 
		\(\mathbf U^{\mathrm{int}}_{C,\alpha,k}\) and \(\mathbf U^{\mathrm{int}}_{I,\beta,\ell}\)
		denote the values reconstructed at the Cartesian‐edge Gauss point 
		\(\mathbf x_{C,\alpha}^{(k)}\) and the interface‐edge Gauss point 
		\(\mathbf x_{I,\beta}^{(\ell)}\), respectively.  The superscript “\(\mathrm{int}\)” indicates reconstruction from the interior of \(\tilde I_i\).

	\item[\textbf{Step 2.}] Similarly, reconstruct these Gauss point values on the exterior of $\tilde{I}_i$, which we denote by \(\mathbf U^{\mathrm{ext}}_{C,\alpha,k}\) and \(\mathbf U^{\mathrm{ext}}_{I,\beta,\ell}\). The superscript ``ext'' signifies that these values are obtained from the exterior of $\tilde{I}_i$.
	
	\item[\textbf{Step 3.}] The semi-discrete finite volume scheme for Eq.~\eqref{equ:euler_int form} is obtained by applying a Gaussian quadrature rule to each cell boundary segment:
	\begin{equation}\label{equ:semidiscre-totalU}
		\begin{aligned}
			\frac{\mathrm{d} \mathbf{U}^{\text{tot}}}{\mathrm{d} t}\approx\mathcal{L}(\mathbf{U}^{\text{tot}}_h):=&-\sum_{\alpha=1}^{N_C}\sum_{k=1}^{2}|e(\mathbf{x}_{C,\alpha}^{(k)})|\omega(\mathbf{x}_{C,\alpha}^{(k)})\widehat{\mathbf{F}}^{\text{single}}\left(\mathbf U^{\mathrm{int}}_{C,\alpha,k},\mathbf U^{\mathrm{ext}}_{C,\alpha,k}\right)\\
			&-\sum_{\beta=1}^{N_I}\sum_{\ell=1}^{2}|e(\mathbf{x}_{I,\beta}^{(\ell)})|\omega(\mathbf{x}_{I,\beta}^{(\ell)})\widehat{\mathbf{F}}^{\text{multi}}\left(\mathbf U^{\mathrm{int}}_{I,\beta,\ell},\mathbf U^{\mathrm{ext}}_{I,\beta,\ell}\right).
		\end{aligned}
	\end{equation}
	
	Here, \(\widehat{\mathbf F}^{\mathrm{single}}\) denotes the single‐material numerical flux, whereas the multi‐material flux is defined by
	\begin{equation}\label{equ:multimedia flux}
		\widehat{\mathbf F}^{\mathrm{multi}}\bigl(\mathbf U^{\mathrm{int}}_{I,\beta,\ell},\,\mathbf U^{\mathrm{ext}}_{I,\beta,\ell}\bigr)
		= \bigl(0,\;p\mathbf n^\top,\;pv_{\perp}\bigr)^{\!\top},
	\end{equation}
	\(\mathbf n\) is the outward unit normal of $\partial \tilde{I}_i$ at \(\mathbf x_{I,\beta}^{(\ell)}\), and \((p, v_{\perp})\) are the pressure and normal velocity obtained from a multi‐material Riemann solver applied to the initial data $\left(\mathbf U^{\mathrm{int}}_{I,\beta,\ell},\mathbf U^{\mathrm{ext}}_{I,\beta,\ell}\right)$. The flux \eqref{equ:multimedia flux} is derived from the Reynolds transport theorem and has been widely used in the literature \cite{liu2001simulation,qiu2007runge,deng2018simulating,zheng2021high}.
	
	In this work, \(\widehat{\mathbf F}^{\mathrm{single}}\) is taken to be the local Lax--Friedrichs flux
	\begin{equation}\label{equ:LLF flux}
		\widehat{\mathbf F}^{\mathrm{LLF}}\bigl(\mathbf U^{\mathrm{int}}_{C,\alpha,k},\,\mathbf U^{\mathrm{ext}}_{C,\alpha,k}\bigr)
		= \tfrac12\Bigl[
		\mathbf F\bigl(\mathbf U^{\mathrm{int}}_{C,\alpha,k}\bigr)\cdot\mathbf n
		+ \mathbf F\bigl(\mathbf U^{\mathrm{ext}}_{C,\alpha,k}\bigr)\cdot\mathbf n
		- a_{\max}\bigl(\mathbf U^{\mathrm{ext}}_{C,\alpha,k}
		- \mathbf U^{\mathrm{int}}_{C,\alpha,k}\bigr)
		\Bigr],
	\end{equation}
	where \(\mathbf n\) is the outward unit normal at \(\mathbf x_{C,\alpha}^{(k)}\) and \(a_{\max}\) denotes the local maximum spectral radius of the Jacobian \(\partial(\mathbf F\cdot\mathbf n)/\partial\mathbf U\) \cite{cockburn1989tvb}; the multi‐material solver is taken to be the exact Riemann solver \cite{liu2001simulation}.
\end{description}
\begin{remark}\label{rem:multiriemann}
A natural requirement for the multi‐material Riemann solver used in Eq.~\eqref{equ:multimedia flux} is: if both $\mathbf U^{\mathrm{int}}_{I,\beta,\ell}$ and $\mathbf U^{\mathrm{ext}}_{I,\beta,\ell}$ share the same pressure $p^*$ and normal velocity $v^{*}_{\perp}$, then the Riemann solver return a solution with $p=p^*$ and $v_{\perp}=v^{*}_{\perp}$.
\end{remark}

\subsection{Key result: pressure-equilibrium preservation of semi-discrete schemes \eqref{equ:semidisc_M}--\eqref{equ:semidiscre-totalU} under forward Euler method}\label{sec:euler_pe}
For ease of reading, we first state the key theorem for the pressure-equilibrium-preserving property of finite-volume discretizations \eqref{equ:semidisc_M}--\eqref{equ:semidiscre-totalU};
the proof is deferred to Section~\ref{sec:proof of pe}.

\begin{theorem}[Pressure‐equilibrium preservation]\label{thm:eulerforward pe}
Let the spatial semi‐discrete schemes for both (i) the evolved geometric moments $\mathcal{M}_{s,r}^{\mathrm{evo}}$ (Section~\ref{subsec:egm-discrete}) and (ii) the total conserved variables $\mathbf{U}^{\mathrm{tot}}$ (Section~\ref{subsec:semidisc-num}) be advanced by the forward Euler method
\begin{equation}\label{equ:euler forward}
	(\cdot)^{n+1} \;=\; (\cdot)^n \;+\;\Delta t\,\mathcal{L}\bigl(\bigl(\cdot\bigr)^n\bigr).
\end{equation}
If at time $t^n$, every cell average
\[
\overline{\mathbf{U}}^n_h
\;=\;
\frac{\mathbf{U}^{\mathrm{tot},\,n}_h}
{\mathcal{M}_{0,0}^{\mathrm{evo},\,n}}
\;\in\;
\mathcal{G}(\mathbf{v}^*,p^*;\gamma_h, B_h),
\]
then after the forward‐Euler step \eqref{equ:euler forward}, each updated cell average preserves the pressure equilibrium
\[
\overline{\mathbf{U}}_{h}^{\,n+1}
\;=\;
\frac{\mathbf{U}^{\mathrm{tot},\,n+1}_h}
{\mathcal{M}_{0,0}^{\mathrm{evo},\,n+1}}\in\mathcal{G}(\mathbf{v}^*,p^*;\gamma_h, B_h).
\]
\end{theorem}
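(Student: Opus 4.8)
The plan is to reduce the statement to a single structural fact: under the hypothesis, the spatial operator in~\eqref{equ:semidiscre-totalU} respects the affine structure of the equilibrium set, so that a forward-Euler step closes by exactly the termwise-linearity argument already used for Theorem~\ref{thm:poly-exact}. Write $\mathcal G(\mathbf v^*,p^*;\gamma_h,B_h)$ as the half-line $\{\rho\,\mathbf w_1+\mathbf w_0:\rho>0\}$, where $\mathbf w_1=(1,(\mathbf v^*)^\top,\tfrac12|\mathbf v^*|^2)^\top$ and $\mathbf w_0=(0,\mathbf 0^\top,c)^\top$ with $c:=(p^*+\gamma_hB_h)/(\gamma_h-1)$ --- this is exactly the parametrization displayed in Definition~\ref{def:set G}. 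In every (merged or Cartesian) cell $\tilde I_i$ the hypothesis $\overline{\mathbf U}^n_h=\mathbf U^{\mathrm{tot},n}_h/\mathcal M^{\mathrm{evo},n}_{0,0}\in\mathcal G$ is equivalent to $\mathbf U^{\mathrm{tot},n}_h=\rho^{\mathrm{tot},n}_h\,\mathbf w_1+\mathcal M^{\mathrm{evo},n}_{0,0}\,\mathbf w_0$ with $\rho^{\mathrm{tot},n}_h>0$ (the cell keeps its material, so $c$ and $\mathbf w_0$ are fixed over the step). The target is $\mathbf U^{\mathrm{tot},n+1}_h=\rho^{\mathrm{tot},n+1}_h\,\mathbf w_1+\mathcal M^{\mathrm{evo},n+1}_{0,0}\,\mathbf w_0$: dividing by $\mathcal M^{\mathrm{evo},n+1}_{0,0}$ gives $\overline{\mathbf U}^{n+1}_h=\bar\rho^{n+1}\mathbf w_1+\mathbf w_0$, whose internal energy is $\rho e=E/\rho-\tfrac12\rho|\mathbf v^*|^2=c$, hence by~\eqref{equ:EOS} its pressure equals $(\gamma_h-1)c-\gamma_hB_h=p^*$ and its velocity $\mathbf v^*$, i.e.\ $\overline{\mathbf U}^{n+1}_h\in\mathcal G$ as soon as $\bar\rho^{n+1}>0$.

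The heart of the proof is the operator identity $\mathcal L(\mathbf U^{\mathrm{tot}}_h)=\mathcal L(\rho^{\mathrm{tot}}_h)\,\mathbf w_1+\mathcal L(\mathcal M^{\mathrm{evo}}_{0,0})\,\mathbf w_0$. To get it I would first note that, since $\overline{\mathbf U}^n_h\in\mathcal G$ holds in \emph{all} cells, every reconstruction stencil lies in $\mathcal G$, so the EC property (Definition~\ref{def:EC property}) makes every reconstructed Gauss-point state (interior or exterior, on Cartesian or interface edges) of the form $\rho\,\mathbf w_1+\mathbf w_0$ --- with the \emph{same} $\mathbf w_0$ across a Cartesian edge, because material interfaces live only on interface edges, so both sides of a Cartesian edge carry the same material. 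The key elementary fact is that $\mathbf F(\mathbf U)\!\cdot\!\mathbf n=(\mathbf v^*\!\cdot\!\mathbf n)\,\mathbf U+\mathbf g$ for every $\mathbf U\in\mathcal G$, where $\mathbf g:=(0,p^*\mathbf n^\top,p^*(\mathbf v^*\!\cdot\!\mathbf n))^\top$ is independent of $\rho$ and of the material. Plugging two such states into the LLF flux~\eqref{equ:LLF flux}, and using $\mathbf U^{\mathrm{ext}}-\mathbf U^{\mathrm{int}}=(\rho^{\mathrm{ext}}-\rho^{\mathrm{int}})\mathbf w_1$, collapses it to $\widehat{\mathbf F}^{\mathrm{LLF}}=\widehat f^{\rho}\,\mathbf w_1+(\mathbf v^*\!\cdot\!\mathbf n)\,\mathbf w_0+\mathbf g$, with $\widehat f^{\rho}$ the scalar LLF density flux. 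On an interface edge the multi-material Riemann solver is fed two states sharing $(p^*,\mathbf v^*\!\cdot\!\mathbf n)$, so by Remark~\ref{rem:multiriemann} it returns $p=p^*$ and $v_\perp=\mathbf v^*\!\cdot\!\mathbf n$, whence $\widehat{\mathbf F}^{\mathrm{multi}}=\mathbf g$ by~\eqref{equ:multimedia flux} --- and this is precisely the $v_\perp$ driving the moment discretization~\eqref{equ:semidisc_M}.

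Next I would use the exact discrete identity $\sum|e|\,\omega\,\mathbf n=\oint_{\partial\tilde I_i}\mathbf n\,\mathrm{d}S=\mathbf 0$ (the sum running over all boundary Gauss points), exact because each boundary segment is straight, so its outward normal is constant and the two-point Gauss rule integrates constants exactly. Dotting with $\mathbf v^*$ gives $\sum_{\text{Cart}}|e|\,\omega(\mathbf v^*\!\cdot\!\mathbf n)=-\sum_{\text{int}}|e|\,\omega(\mathbf v^*\!\cdot\!\mathbf n)=-\mathcal L(\mathcal M^{\mathrm{evo}}_{0,0})$, and the same identity gives $\sum|e|\,\omega\,\mathbf g=\mathbf 0$. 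Substituting the two flux formulas into~\eqref{equ:semidiscre-totalU}, the $\mathbf g$-pieces cancel around the closed boundary, the Cartesian-edge $(\mathbf v^*\!\cdot\!\mathbf n)\mathbf w_0$ pieces reassemble into $\mathcal L(\mathcal M^{\mathrm{evo}}_{0,0})\mathbf w_0$, and the $\widehat f^{\rho}$-pieces are by definition $\mathcal L(\rho^{\mathrm{tot}}_h)\mathbf w_1$ (mass receives no interface contribution); this is the operator identity. A forward-Euler step applied to $\rho^{\mathrm{tot}}_h$, $(\rho\mathbf v)^{\mathrm{tot}}_h$, $E^{\mathrm{tot}}_h$ and $\mathcal M^{\mathrm{evo}}_{0,0}$ simultaneously then carries the level-$n$ decomposition to level $n+1$. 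Finally, $\rho^{\mathrm{tot},n+1}_h>0$ and $\mathcal M^{\mathrm{evo},n+1}_{0,0}>0$ hold under a CFL restriction (positivity of the LLF/reconstruction mass update; positivity of the evolved cell volume under the GCL) --- routine, and not the crux.

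I expect the real difficulty to lie in getting the energy component to close: one must recognize that after the pressure work $\mathbf g$ cancels around $\partial\tilde I_i$, the residual in the energy update is exactly $c\,\Delta t\,\mathcal L(\mathcal M^{\mathrm{evo}}_{0,0})$, i.e.\ $c$ times the forward-Euler increment of the zeroth \emph{evolved} moment. This matching works only because~\eqref{equ:semidisc_M} and~\eqref{equ:semidiscre-totalU} share the same interface Gauss points, weights, segment lengths and normal velocities $v_\perp$ --- the consistency imposed in Section~\ref{subsec:egm-discrete}; replacing $\mathcal M^{\mathrm{evo}}_{s,r}$ by the reconstructed moments $\mathcal M^{\mathrm{rec}}_{s,r}$ would destroy it, in line with the numerical breakdown observed for $\mathcal M^{\mathrm{rec}}$. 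The remaining items --- the LLF flux collapse and the closed-boundary identity --- are short componentwise computations.
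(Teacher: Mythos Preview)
Your proposal is correct and rests on exactly the same ingredients as the paper's proof: the EC property forcing all reconstructed Gauss-point states into $\mathcal G$, the flux collapse $\mathbf F(\mathbf U)\!\cdot\!\mathbf n=(\mathbf v^*\!\cdot\!\mathbf n)\mathbf U+\mathbf g$ on Cartesian edges and $\widehat{\mathbf F}^{\mathrm{multi}}=\mathbf g$ on interface edges (via Remark~\ref{rem:multiriemann}), the closed-boundary identity $\sum|e|\,\omega\,\mathbf n=\mathbf 0$, and the consistency between the interface quadrature in~\eqref{equ:semidisc_M} and~\eqref{equ:semidiscre-totalU} that turns the residual energy term into $c\,\mathcal L(\mathcal M^{\mathrm{evo}}_{0,0})$. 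The paper packages these componentwise: it first isolates a lemma giving $\mathcal L(m_x^{\mathrm{tot}})=v_x^*\mathcal L(\rho^{\mathrm{tot}})$, $\mathcal L(m_y^{\mathrm{tot}})=v_y^*\mathcal L(\rho^{\mathrm{tot}})$, $\mathcal L(E^{\mathrm{tot}})=\tfrac12|\mathbf v^*|^2\mathcal L(\rho^{\mathrm{tot}})+c\int_{\tilde\Gamma}(\mathbf v^*\!\cdot\!\mathbf n)\,\mathrm dS$ (your operator identity written out), and then deduces $\delta(v_x)=\delta(v_y)=\delta(p)=0$ through an increment-operator product rule $\delta(xy)=x\,\delta y+y\,\delta x+\delta x\,\delta y$ applied to $m_x^{\mathrm{tot}}=\rho^{\mathrm{tot}}v_x$ and to the EOS. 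Your affine parametrization $\mathbf U^{\mathrm{tot}}=\rho^{\mathrm{tot}}\mathbf w_1+\mathcal M^{\mathrm{evo}}_{0,0}\mathbf w_0$ sidesteps that $\delta$-calculus entirely: once the operator identity is in hand, linearity of the forward-Euler map closes the step in one line. This is a genuine streamlining---it makes transparent that only the zeroth evolved moment matters for equilibrium (higher moments enter for accuracy, not here)---though the underlying analysis is the same. One small slip: in your first paragraph, $\rho e=E/\rho-\tfrac12\rho|\mathbf v^*|^2$ should read $\rho e=E-\tfrac12\rho|\mathbf v^*|^2$.
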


\subsection{Complete GPR scheme}\label{subsec:whole GPR}
The forward Euler method \eqref{equ:euler forward} is only
first-order accurate in time. To achieve higher temporal accuracy we employ the SSPRK method.  
Because an SSPRK method is formally a convex combination of forward Euler steps \cite{shu1989efficient}, Theorem~\ref{thm:eulerforward pe} together with
Lemma~\ref{lem:convex} yields the following result.
\begin{proposition}\label{prop:pe_0}
	If, in Theorem \ref{thm:eulerforward pe}, the forward Euler method is replaced by any SSPRK method, then the conclusion of pressure‐equilibrium preservation remains valid.
\end{proposition}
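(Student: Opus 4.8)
The plan is to induct on the stages of the SSPRK method, invoking at each stage the single-step conclusion of Theorem~\ref{thm:eulerforward pe} together with convexity of $\mathcal{G}$ (Lemma~\ref{lem:convex}). I would first write the SSPRK scheme in Shu--Osher form, so that its $i$-th intermediate stage is a convex combination $\sum_{k<i}\alpha_{ik}\bigl[(\cdot)^{(k)}+\tfrac{\beta_{ik}}{\alpha_{ik}}\Delta t\,\mathcal{L}((\cdot)^{(k)})\bigr]$ of forward-Euler sub-updates of the preceding stages, with $\alpha_{ik}\ge0$ and $\sum_{k<i}\alpha_{ik}=1$; the identical update is applied to both the total conserved variables $\mathbf{U}^{\mathrm{tot}}_h$ and the evolved volume $\mathcal{M}^{\mathrm{evo}}_{0,0}$ under their (mutually compatible) semi-discrete operators from Sections~\ref{subsec:egm-discrete} and~\ref{subsec:semidisc-num}. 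The induction hypothesis is that, at every preceding stage $k<i$ and in every cell, the stage cell average $\overline{\mathbf{U}}^{(k)}_h=\mathbf{U}^{\mathrm{tot},(k)}_h/\mathcal{M}^{\mathrm{evo},(k)}_{0,0}$ lies in $\mathcal{G}(\mathbf{v}^*,p^*;\gamma_h,B_h)$; the base case is exactly the hypothesis at $t^n$.

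For the inductive step I would fix $i$ and denote by $\mathbf{W}^{(k)}$ and $m^{(k)}$ the forward-Euler sub-updates (with step size $\tfrac{\beta_{ik}}{\alpha_{ik}}\Delta t$) of $\mathbf{U}^{\mathrm{tot},(k)}_h$ and $\mathcal{M}^{\mathrm{evo},(k)}_{0,0}$ that appear in the Shu--Osher expansion of stage $i$. Since all cell averages at stage $k$ lie in $\mathcal{G}$ by hypothesis, Theorem~\ref{thm:eulerforward pe} applies to each such forward-Euler sub-step and gives $\overline{\mathbf{U}}^{(k)}_{\mathrm{FE}}:=\mathbf{W}^{(k)}/m^{(k)}\in\mathcal{G}$, hence $\mathbf{W}^{(k)}=m^{(k)}\,\overline{\mathbf{U}}^{(k)}_{\mathrm{FE}}$. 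The stage-$i$ cell average is then
\[
\overline{\mathbf{U}}^{(i)}_h=\frac{\sum_{k<i}\alpha_{ik}\,\mathbf{W}^{(k)}}{\sum_{k<i}\alpha_{ik}\,m^{(k)}}=\sum_{k<i}\lambda_k\,\overline{\mathbf{U}}^{(k)}_{\mathrm{FE}},\qquad \lambda_k:=\frac{\alpha_{ik}\,m^{(k)}}{\sum_{j<i}\alpha_{ij}\,m^{(j)}},
\]
and because the $\lambda_k$ are nonnegative and sum to one this is a convex combination of elements of $\mathcal{G}$, hence lies in $\mathcal{G}$ by Lemma~\ref{lem:convex}. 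This closes the induction, and evaluating at the final stage gives $\overline{\mathbf{U}}^{n+1}_h\in\mathcal{G}$; the argument is cell-local once Theorem~\ref{thm:eulerforward pe} is in hand, exactly mirroring the proof of Proposition~\ref{prop:ssprk3_poly_exact}.

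The one subtlety I expect to need care is that the preserved object $\overline{\mathbf{U}}_h=\mathbf{U}^{\mathrm{tot}}_h/\mathcal{M}^{\mathrm{evo}}_{0,0}$ is a \emph{ratio} of the evolved unknowns, so a genuine convex combination of the totals $\mathbf{U}^{\mathrm{tot}}_h$ produces only a \emph{positively weighted} average of the forward-Euler cell averages, with the weights $\lambda_k$ rescaled by the stage volumes $m^{(k)}$. This causes no difficulty provided each $m^{(k)}>0$: the $m^{(k)}$ are forward-Euler updates of cut-cell volumes, which a well-posed cut-cell mesh keeps strictly positive under the usual CFL restriction, so the denominators $\mathcal{M}^{\mathrm{evo},(\cdot)}_{0,0}$ appearing above are positive and the renormalization that turns the positive combination into a convex one is legitimate. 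Beyond this bookkeeping I do not anticipate any further obstacle; the proposition is a direct corollary of Theorem~\ref{thm:eulerforward pe} and Lemma~\ref{lem:convex}.
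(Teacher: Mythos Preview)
Your proposal is correct and follows essentially the same approach as the paper: write the SSPRK scheme as a convex combination of forward-Euler sub-steps, invoke Theorem~\ref{thm:eulerforward pe} on each, and conclude via Lemma~\ref{lem:convex}. Your treatment is in fact more careful than the paper's one-line justification, since you explicitly handle the subtlety that the preserved quantity $\overline{\mathbf{U}}_h=\mathbf{U}^{\mathrm{tot}}_h/\mathcal{M}^{\mathrm{evo}}_{0,0}$ is a ratio, so the convex combination of the totals induces a convex combination of the cell averages only after reweighting by the (positive) stage volumes $m^{(k)}$.
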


We adopt a third-order SSPRK (SSPRK3) method to advance both the geometric moments ${\mathcal{M}_{s,r}^{\text{evo}}}$ and the total conserved variables $\mathbf{U}^\text{tot}_h$ in time:
\begin{equation}\label{equ:ssprk3}
	\begin{cases}
		(\cdot)^{(1)}=(\cdot)^n+\Delta t\mathcal{L}((\cdot)^n),\\
		(\cdot)^{(2)}=\frac34(\cdot)^n+\frac14\left((\cdot)^{(1)}+\Delta t\mathcal{L}((\cdot)^{(1)})\right),\\
		(\cdot)^{n+1}=\frac13(\cdot)^n+\frac23\left((\cdot)^{(2)}+\Delta t\mathcal{L}((\cdot)^{(2)})\right).\\
	\end{cases}
\end{equation}
Here, the superscript $n$ indicates the numerical solution at time level $t^{n}$, $n+1$ denotes the solution at time level $t^{n+1}=t^{n}+\Delta t$. 

After a single SSPRK3 update to $t^{n+1}$, the discrepancy between the evolved moments $\mathcal{M}_{s,r}^{\mathrm{evo}}$ and the reconstructed moments \(\mathcal{M}_{s,r}^{\mathrm{rec}}\) increases, creating a mismatch between the flow solution and the interface geometry $\Gamma_h$. To restore consistency---and to prepare for the next cell-merging operation---the total conserved quantity $\mathbf U^{\mathrm{tot}}_h$ stored in each merged cell needs to be redistributed to its constituent sub‑cells, and the evolved moments $\mathcal{M}_{s,r}^{\mathrm{evo}}$ should be reset to $\mathcal{M}_{s,r}^{\mathrm{rec}}$. In this paper, we employ an EC polynomial reconstruction (detailed in Section \ref{sec:WENO}) to perform this redistribution, thereby achieving high‑order accuracy without disrupting pressure‑equilibrium preservation:

\noindent\textbf{Redistribution example.}  
Consider the configuration in Figure \ref{fig:face_rec}.  
The merged cell $\tilde I_1 = I_{i,j}^{1} \cup I_{i+1,j}^{1}$ has a known total conserved quantity $\mathbf U^{\text{tot}}_h(\tilde I_1)$ that must be redistributed to its two cut-cell children,
$I_{i,j}^{1}$ and $I_{i+1,j}^{1}$.
Using the neighboring cells' evolved geometric moments
$\mathcal M^{\text{evo}}_{s,r}$ and their total conserved quantities
$\mathbf U^{\text{tot}}_h$, the EC polynomial reconstruction produces a $N_p$-th-degree vector polynomial
\begin{equation}\label{equ:red_p}
	\mathbf U(x,y)
	\;\approx\;
	\mathbf p(x,y)
	\;=\;
	\sum_{s+r\le N_{p}}\mathbf a_{s,r}\,x^{s}y^{r},
	\qquad
	\mathbf a_{s,r}\in\mathbb R^{4},
\end{equation}
which subject to the conservation constraint
\begin{equation}\label{equ:red_p_con}
	\int_{\tilde I_{1}}\mathbf p(x,y)\,\mathrm{d}x\,\mathrm{d}y
	=\sum_{s+r\le N_p} \mathbf a_{s,r}
	\int_{\tilde I_{1}} x^{s}y^{r}\,\mathrm{d}x\,\mathrm{d}y=\sum_{s+r\le N_p} \mathbf a_{s,r}\,
	\mathcal M_{s,r}^{\mathrm{evo}}(\tilde I_{1})=\mathbf U^{\text{tot}}_h(\tilde I_{1}).
\end{equation}
The total conserved quantity assigned to cut cell $I_{i,j}^{1}$ can then be approximated by
\begin{equation}\label{equ:redistribute_1}
	\mathbf U^{\text{tot}}_h(I_{i,j}^{1})
	=
	\int_{I_{i,j}^{1}} \mathbf p(x,y)\,\mathrm{d}x\,\mathrm{d}y
	=
	\sum_{s+r\le N_p} \mathbf a_{s,r}
	\int_{I_{i,j}^{1}}\! x^{s}y^{r}\,\mathrm{d}x\,\mathrm{d}y
	=
	\sum_{s+r\le N_p} \mathbf a_{s,r}\,
	\mathcal M_{s,r}^{\mathrm{rec}}(I_{i,j}^{1}),
\end{equation}
where $\mathcal M_{s,r}^{\mathrm{rec}}(I_{i,j}^{1})$ denotes the reconstructed geometric moment of $I_{i,j}^{1}$.
An analogous approximation holds for the neighboring cut cell
$I_{i+1,j}^{1}$:
\begin{equation}\label{equ:redistribute_2}
	\mathbf U^{\text{tot}}_h(I_{i+1,j}^{1})
	\;=\;
	\sum_{s+r\le N_p} \mathbf a_{s,r}\,
	\mathcal M_{s,r}^{\mathrm{rec}}(I_{i+1,j}^{1}).
\end{equation}

\begin{lemma}\label{lem:pe_2}
	The EC polynomial redistribution procedure illustrated in Eqs.~\eqref{equ:redistribute_1}--\eqref{equ:redistribute_2}
	preserves pressure equilibrium of the cell average $\overline{\mathbf{U}}_h=\frac{\mathbf{U}^{\mathrm{tot}}_h}
	{\mathcal{M}_{0,0}^{\mathrm{rec}}}$ exactly.
\end{lemma}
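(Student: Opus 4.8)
The plan is to combine the affine geometry of the pressure-equilibrium set with the EC property of the reconstruction. By Definition~\ref{def:set G}, $\mathcal G(\mathbf v^*,p^*;\gamma_h,B_h)$ is the ray $\{\rho\,\mathbf c_1+\mathbf c_0:\rho>0\}$, where $\mathbf c_1=(1,\mathbf v^{*\top},\tfrac12|\mathbf v^*|^2)^\top$ and $\mathbf c_0=(0,\mathbf 0^\top,\tfrac{p^*+\gamma_h B_h}{\gamma_h-1})^\top$ are fixed. First I would note that the redistribution reconstruction for $\tilde I_1$ draws on a one-sided (single-material) stencil, so that---using the standing assumption that pressure equilibrium holds before redistribution, which in context is guaranteed by Proposition~\ref{prop:pe_0}---every stencil cell average $\mathbf U^{\text{tot}}_h/\mathcal M^{\mathrm{evo}}_{0,0}$ lies in the \emph{same} set $\mathcal G(\mathbf v^*,p^*;\gamma_h,B_h)$, with $(\gamma_h,B_h)$ the EOS parameters of material~1.

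Second, I would promote the pointwise EC property (Definition~\ref{def:EC property}) to a structural statement about the reconstructed polynomial \eqref{equ:red_p}. Writing $\mathbf p=(p^\rho,p^{\rho v_1},p^{\rho v_2},p^{E})^\top$, the EC property forces velocity $\mathbf v^*$ and pressure $p^*$ at every evaluation point; hence $p^{\rho v_1}\equiv v_1^*\,p^\rho$ and $p^{\rho v_2}\equiv v_2^*\,p^\rho$ as polynomial identities, and feeding these through the stiffened EOS \eqref{equ:EOS} pins down $p^{E}\equiv\tfrac{p^*+\gamma_h B_h}{\gamma_h-1}+\tfrac12\,p^\rho|\mathbf v^*|^2$. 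Equivalently, $\mathbf p(x,y)=q(x,y)\,\mathbf c_1+\mathbf c_0$ for the scalar polynomial $q(x,y)=p^\rho(x,y)=\sum_{s+r\le N_p}a^\rho_{s,r}x^sy^r$.

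Third, the conclusion follows by integrating this representation over each cut-cell child. From \eqref{equ:redistribute_1},
\[
\mathbf U^{\text{tot}}_h(I^1_{i,j})=\int_{I^1_{i,j}}\mathbf p\,\mathrm dx\,\mathrm dy
=\Bigl(\textstyle\sum_{s+r\le N_p}a^\rho_{s,r}\,\mathcal M^{\mathrm{rec}}_{s,r}(I^1_{i,j})\Bigr)\mathbf c_1
+\mathcal M^{\mathrm{rec}}_{0,0}(I^1_{i,j})\,\mathbf c_0,
\]
so dividing by $\mathcal M^{\mathrm{rec}}_{0,0}(I^1_{i,j})>0$ yields $\overline{\mathbf U}_h(I^1_{i,j})=\bar\rho\,\mathbf c_1+\mathbf c_0$ with $\bar\rho=\bigl(\int_{I^1_{i,j}}q\,\mathrm dx\,\mathrm dy\bigr)/\mathcal M^{\mathrm{rec}}_{0,0}(I^1_{i,j})$; by Definition~\ref{def:set G} this lies in $\mathcal G(\mathbf v^*,p^*;\gamma_h,B_h)$ as soon as $\bar\rho>0$. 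The identical computation via \eqref{equ:redistribute_2} handles $I^1_{i+1,j}$, which completes the argument.

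The algebra is routine; the two points to handle with care are the hypotheses feeding it. First, one must make sure the redistribution reconstruction stencil is genuinely single-material, so that a single pair $(\gamma_h,B_h)$ governs all stencil averages and $\mathcal G$ is a single affine ray---if the stencil straddled both materials, the collapse of $\mathbf p$ to the form $q\,\mathbf c_1+\mathbf c_0$ would fail. Second, strict membership in $\mathcal G$ needs $\bar\rho>0$, which I would obtain from positivity of the reconstructed density (equivalently, a positivity safeguard on the EC reconstruction) together with $\mathcal M^{\mathrm{rec}}_{0,0}>0$. I would also remark that the conservation constraint \eqref{equ:red_p_con} is irrelevant to pressure-equilibrium preservation per se: it only fixes the remaining degrees of freedom so that the children's conserved quantities sum back to $\mathbf U^{\text{tot}}_h(\tilde I_1)$.
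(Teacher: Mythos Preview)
Your proposal is correct and follows essentially the same route as the paper's proof: both use the EC property to conclude that the reconstructed polynomial satisfies $\mathbf p(x,y)\in\mathcal G(\mathbf v^*,p^*;\gamma_h,B_h)$ pointwise, then exploit the affine structure of $\mathcal G$ (your $q\,\mathbf c_1+\mathbf c_0$ parametrization versus the paper's splitting into $\mathbf a_{0,0}\in\mathcal G$ and higher-order coefficients in the direction subspace $\mathcal H$) so that the integrated cell average lands back in $\mathcal G$. Your explicit flagging of the single-material stencil requirement and of the need for $\bar\rho>0$ is actually more careful than the paper, which leaves these points implicit.
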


For better readability, the proof of Lemma~\ref{lem:pe_2} is deferred to
Section~\ref{sec:red_pe}.

By assembling all of the preceding steps, we obtain the fully discrete GPR cut-cell scheme. For the reader's convenience, the detailed computational procedure of the complete GPR cut-cell scheme is presented in Figure \ref{fig:precise}. As proved in the following Theorem, this fully discrete scheme is pressure-equilibrium-preserving and GPR.

\begin{figure}[htbp]
	\centering
	\fcolorbox{black}{blue!10}{
		\parbox{.95\linewidth}{
			\small{\bf \begin{center}
					Precis of GPR cut-cell scheme for two-material flows with stiffened EOS \eqref{equ:EOS}
			\end{center}}
			\smallskip
			\begin{description}
				\item[\textbf{Step 1.}]  At time $t=0$, use the level set function $\phi$ to partition all Cartesian cells intersected by the interface into cut cells, and compute \(\mathcal{M}_{s,r}^{\mathrm{rec}}\) for each cut cell.
				
				\item[\textbf{Step 2.}] Apply a cell‐merging algorithm to mitigate the small‐cell problem. For each merged cell \(\tilde{I}\), obtain its total conserved variables \(\mathbf{U}^{\mathrm{tot}}_h\) and reconstructed geometric moments \(\mathcal{M}_{s,r}^{\mathrm{rec}}\) by summing those of its subcells, then initialize the evolved geometric moments \(\mathcal{M}_{s,r}^{\mathrm{evo}}\) as \(\mathcal{M}_{s,r}^{\mathrm{rec}}\).
				
				\item[\textbf{Step 3.}]  Advance \(\mathbf{U}^{\mathrm{tot}}_h\) and \(\mathcal{M}_{s,r}^{\mathrm{evo}}\) from $t^n$ to $t^{n+1}$ with the SSPRK3 method, using the semi-discrete operators in
				Eqs.~\eqref{equ:semidisc_M} and \eqref{equ:semidiscre-totalU}. Simultaneously, update the interface by solving the level set equation \eqref{equ:levelset}.
				
				\item[\textbf{Step 4.}] At time $t=t^{n+1},$ repartition the Cartesian cells intersected by the updated interface into cut cells, and
				recompute \(\mathcal{M}_{s,r}^{\mathrm{rec}}\) for each new cut cell.
				
				\item[\textbf{Step 5.}] Use an EC reconstruction to redistribute the
				conserved variables \(\mathbf{U}^{\mathrm{tot}}_h\) from each merged cell back to its subcells.
				Return to Step 2.
			\end{description}
		}
	}
	\captionsetup{font=small}
	\caption{Precise of the GPR cut-cell scheme for two-material flows.}
	\label{fig:precise}
\end{figure}

\begin{theorem}[Geometric-perturbation-robust]\label{thm:GPR is GPR}
	The GPR cut‐cell scheme is pressure-equilibrium-preserving and GPR.
\end{theorem}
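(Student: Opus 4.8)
The plan is to prove the two claims in turn. For pressure‐equilibrium preservation I would argue by induction on the time level, carrying the invariant: \emph{at the start of each time step every cut cell $K$ satisfies $\mathbf U^{\mathrm{tot}}_h(K)/\mathcal M^{\mathrm{rec}}_{0,0}(K)\in\mathcal G(\mathbf v^\ast,p^\ast;\gamma_h,B_h)$.} The base case is the hypothesis of Definition~\ref{def:pe_scheme} combined with the convexity Lemma~\ref{lem:convex} (so that subdividing a Cartesian cell into cut cells, and integrating the initial data over them, keeps every cut‐cell average in $\mathcal G$). All reconstructions appearing in Steps~1, 3 and 5 of Figure~\ref{fig:precise} are EC, which is exactly what the cited results below presuppose.

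For the inductive step I would walk through Steps~2--5 of Figure~\ref{fig:precise} and verify that each keeps the relevant cell average in $\mathcal G$. Step~2 (merging) replaces same‐material cut‐cell averages by a volume‐weighted average of them, hence still in $\mathcal G$ by Lemma~\ref{lem:convex}; since $\mathcal M^{\mathrm{evo}}_{s,r}$ is reset to $\mathcal M^{\mathrm{rec}}_{s,r}$ here, the merged‐cell average normalized by $\mathcal M^{\mathrm{evo}}_{0,0}$ agrees with that normalized by $\mathcal M^{\mathrm{rec}}_{0,0}$, supplying the hypothesis of Theorem~\ref{thm:eulerforward pe}. Step~3 (the SSPRK3 advance of both $\mathbf U^{\mathrm{tot}}_h$ and $\mathcal M^{\mathrm{evo}}_{s,r}$ via \eqref{equ:semidisc_M}--\eqref{equ:semidiscre-totalU}) preserves the equilibrium of $\mathbf U^{\mathrm{tot}}_h/\mathcal M^{\mathrm{evo}}_{0,0}$ by Proposition~\ref{prop:pe_0}; the simultaneous level‐set update alters neither $\mathbf U^{\mathrm{tot}}_h$ nor the moments and is therefore irrelevant. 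Step~4 (repartitioning and recomputing $\mathcal M^{\mathrm{rec}}_{s,r}$) is purely geometric and leaves $\mathbf U^{\mathrm{tot}}_h$ untouched. Step~5 (EC polynomial redistribution) takes merged‐cell data whose averages $\mathbf U^{\mathrm{tot}}_h/\mathcal M^{\mathrm{evo}}_{0,0}$ lie in $\mathcal G$ and, by Lemma~\ref{lem:pe_2}, returns new cut‐cell data whose averages $\mathbf U^{\mathrm{tot}}_h/\mathcal M^{\mathrm{rec}}_{0,0}$ are again in $\mathcal G$, restoring the invariant at $t^{n+1}$. Composing the four steps closes the induction. The only real care needed is bookkeeping: tracking with respect to which moment ($\mathcal M^{\mathrm{evo}}$ or $\mathcal M^{\mathrm{rec}}$) each cell average is normalized at each stage, so that the hypotheses of the invoked results match exactly.

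For the GPR claim I would observe that the entire argument above never uses the position of $\Gamma_h$. It uses only: (i) the built‐in consistency between the semi‐discretizations \eqref{equ:semidisc_M} of $\mathcal M^{\mathrm{evo}}_{s,r}$ and \eqref{equ:semidiscre-totalU} of $\mathbf U^{\mathrm{tot}}_h$ (same interface Gauss points, same $v_\perp$); (ii) the EC property of the reconstruction, which holds for any admissible stencil geometry; (iii) convexity of $\mathcal G$; and (iv) admissibility of the cut‐cell geometry (positive cut‐cell volumes, maintained under the CFL restriction over a step; well‐defined merging; valid, solvable reconstruction stencils; positive reconstructed densities). Properties (i)--(iii) are intrinsic to the scheme and independent of where the interface lies, so the pressure‐equilibrium proof applies verbatim to \emph{any} interface meeting (iv). Since (iv) is an open condition on the interface configuration and the captured $\Gamma_h$ satisfies it along the (finite‐time) run, a continuity/compactness argument over the finite time horizon yields $\varepsilon>0$ such that every $\tilde\Gamma_h$ with $d_H(\Gamma_h,\tilde\Gamma_h)<\varepsilon$ stays admissible throughout; for any such $\tilde\Gamma_h$ the scheme remains pressure‐equilibrium‐preserving, which is precisely Definition~\ref{def:GPR}.

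I expect the step a careful reader will scrutinize to be the key structural claim underlying the GPR part: that \emph{no} step of the scheme pins the discrete interface to move exactly with the fluid velocity. This is where the contrast with Theorem~\ref{thm:connotGPR} lives—there, the mass/energy balance rigidly forces the displacement to equal $v^\ast\Delta t$ (Eq.~\eqref{equ: gprnotconserv_equ}), so a perturbation is incompatible with equilibrium; here the evolved moments $\mathcal M^{\mathrm{evo}}_{s,r}$ merely record whatever geometry the deformed cut cells actually have, and the redistribution in Step~5 is deliberately non‐conservative, so Proposition~\ref{prop:connotGPR_relax} is not contradicted. I would therefore devote most of the write‐up to making (i) above fully explicit—exhibiting that both the pairing $(\mathcal M^{\mathrm{evo}}_{s,r},\mathbf U^{\mathrm{tot}}_h)$ used in Step~3 and the pairing used in Step~5 reference only the cut cells' own (possibly perturbed) geometry and never the ``true'' interface—after which the openness of (iv) is a routine argument on a fixed mesh.
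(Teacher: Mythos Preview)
Your proposal is correct and follows essentially the same approach as the paper: walk through Steps~2--5 of Figure~\ref{fig:precise}, invoking Lemma~\ref{lem:convex} for merging, Proposition~\ref{prop:pe_0} for the SSPRK3 advance, and Lemma~\ref{lem:pe_2} for redistribution, then observe that none of these arguments depends on the exact interface location. The paper's proof is considerably terser and, for the GPR part, names the specific geometric obstruction (the interface crossing a Cartesian vertex of a merged cell) rather than your more abstract open ``admissibility'' condition~(iv); your careful bookkeeping of $\mathcal M^{\mathrm{evo}}$ versus $\mathcal M^{\mathrm{rec}}$ normalization is a welcome addition that the paper leaves implicit.
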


\begin{proof}
	Proposition \ref{prop:pe_0} and Lemma~\ref{lem:pe_2} demonstrate that the manipulations performed in Steps 3, 4 and 5 each preserve pressure equilibrium. Moreover, Lemma~\ref{lem:convex} guarantees that the cell-merging procedure in Step 2 likewise preserves pressure equilibrium. Therefore, the GPR‑CC scheme is pressure‑equilibrium‑preserving.
	
	Finally, the proofs of Proposition~\ref{prop:pe_0} and Lemma~\ref{lem:pe_2} rely solely on the geometric assumption that the material interface never crosses a Cartesian vertex of any merged cell. Hence, any sufficiently small perturbation of the interface---provided it does not force the interface to cross a cell vertex---leaves every step of the foregoing argument intact, thus the scheme is GPR.
\end{proof}

Within the GPR framework outlined in Figure \ref{fig:precise},  
employing a \emph{first‐order} EC reconstruction
(i.e., using the cell average as the reconstructed value)
and replacing the SSPRK3 with a forward Euler method yields a first-order GPR-LLF scheme.
As presented in Figure \ref{fig:pureI_randPer}\,(right), even when the interface position is slightly perturbed, the solution structure obtained by GPR-LLF remains unchanged, confirming the GPR property of the proposed framework.

\subsection{Conservation error}\label{subsec:conserv-error}

Theorem~\ref{thm:connotGPR} reveals an intrinsic trade-off: one cannot simultaneously enforce strict conservation and GPR.
Within the proposed GPR cut-cell framework, the \emph{only}
non-conservative operation is the redistribution
(Eqs.~\eqref{equ:redistribute_1}--\eqref{equ:redistribute_2},
Step 5 in Figure \ref{fig:precise}).

\paragraph{Cell-wise error}
Adding Eqs.~\eqref{equ:redistribute_1} and \eqref{equ:redistribute_2} shows
that, after redistribution, the total conserved quantity of the merged cell
\(\tilde I_1=I_{i,j}^{1}\cup I_{i+1,j}^{1}\)
becomes
\[
\widehat{\mathbf U}^{\text{tot}}_h(\tilde I_1)
=\sum_{s+r\le N_p}\mathbf a_{s,r}\,
\bigl(
\mathcal M_{s,r}^{\mathrm{rec}}(I_{i,j}^{1})
+\mathcal M_{s,r}^{\mathrm{rec}}(I_{i+1,j}^{1})
\bigr).
\]
Because the reconstructed geometric moments satisfy 
\(
\mathcal M_{s,r}^{\mathrm{rec}}(I_{i,j}^{1})
+\mathcal M_{s,r}^{\mathrm{rec}}(I_{i+1,j}^{1})
=\mathcal M_{s,r}^{\mathrm{rec}}(I_{i,j}^{1}\cup I_{i+1,j}^{1})=\mathcal M_{s,r}^{\mathrm{rec}}(\tilde I_1)
\),
by the conservation constraint \eqref{equ:red_p_con},
the cell-wise conservation error is
\[
\bigl|
\widehat{\mathbf U}^{\text{tot}}_h(\tilde I_1)
-\mathbf U^{\text{tot}}_h(\tilde I_1)
\bigr|
=\Bigl|
\sum_{s+r\le N_p}\mathbf a_{s,r}\,
\bigl(
\mathcal M_{s,r}^{\mathrm{rec}}(\tilde I_1)
-\mathcal M_{s,r}^{\mathrm{evo}}(\tilde I_1)
\bigr)
\Bigr|,
\]
where both $\mathcal M_{s,r}^{\mathrm{evo}}$ and $\mathcal M_{s,r}^{\mathrm{rec}}$ are evaluated at the end of the SSPRK3 method ($t=t^{n+1}$). Thus, we have the following proposition immediately.

\begin{proposition}[Species-wise conservation error]
	\label{prop:species-mass-error}
	For any merged cell $\tilde I_i$, if at $t^{n+1}$ the equality
	\(\mathcal M_{s,r}^{\mathrm{rec}}(\tilde I_i)
	=\mathcal M_{s,r}^{\mathrm{evo}}(\tilde I_i)\)
	holds for all $s,r\in \mathbb{N}$,
	then the redistribution step introduces \emph{no} conservation error,
	including species (material-wise) mass errors.
\end{proposition}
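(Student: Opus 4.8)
The plan is to read the result off the cell-wise conservation-error identity established in the paragraph immediately preceding the statement, and then to upgrade ``total'' conservation to ``species-wise'' conservation using the single-material structure of merged cells.

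First I would note that the redistribution step (Step~5 of Figure~\ref{fig:precise}) is purely local: within each merged cell $\tilde I_i$ it only reapportions the stored total $\mathbf U^{\text{tot}}_h(\tilde I_i)$ among the cut-cell children of $\tilde I_i$, while every single-material Cartesian cell is left untouched. Hence the only conceivable conservation error is a mismatch between the pre-redistribution value $\mathbf U^{\text{tot}}_h(\tilde I_i)$ and the post-redistribution sum $\widehat{\mathbf U}^{\text{tot}}_h(\tilde I_i)$ over the children. Additivity of the reconstructed moments over disjoint pieces, $\mathcal M^{\mathrm{rec}}_{s,r}(A\cup B)=\mathcal M^{\mathrm{rec}}_{s,r}(A)+\mathcal M^{\mathrm{rec}}_{s,r}(B)$, combined with the conservation constraint~\eqref{equ:red_p_con} that fixes the coefficients $\mathbf a_{s,r}$ of the redistribution polynomial~\eqref{equ:red_p}, yields exactly the identity
\[
\widehat{\mathbf U}^{\text{tot}}_h(\tilde I_i)-\mathbf U^{\text{tot}}_h(\tilde I_i)
=\sum_{s+r\le N_p}\mathbf a_{s,r}\bigl(\mathcal M^{\mathrm{rec}}_{s,r}(\tilde I_i)-\mathcal M^{\mathrm{evo}}_{s,r}(\tilde I_i)\bigr)
\]
already displayed above. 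Under the hypothesis $\mathcal M^{\mathrm{rec}}_{s,r}(\tilde I_i)=\mathcal M^{\mathrm{evo}}_{s,r}(\tilde I_i)$ for all $s,r$---in particular for the finitely many $(s,r)$ with $s+r\le N_p$ that actually appear---every summand vanishes, so $\widehat{\mathbf U}^{\text{tot}}_h(\tilde I_i)=\mathbf U^{\text{tot}}_h(\tilde I_i)$. Summing over all merged cells and adding the untouched Cartesian cells shows that total mass, total momentum, and total energy are preserved exactly by the redistribution.

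To sharpen this from ``total'' to ``species-wise'', I would invoke the defining property of the cut-cell partition: each merged cell $\tilde I_i$ is a union of cut cells belonging to one and the same material, and each single-material Cartesian cell carries exactly one material. Consequently the mass of material $m$ after redistribution equals the sum of the density components of the $\mathbf U^{\text{tot}}_h$'s over precisely the merged and Cartesian cells assigned to material $m$; since each such total is individually unchanged, the per-material mass---and, by the identical argument, the per-material momentum and energy---is preserved. There is no genuine obstacle here; the one point needing care is simply to record that the hypothesis ``for all $s,r$'' is stronger than necessary, since only the moments with $s+r\le N_p$ enter the error formula, so the restricted equality $\mathcal M^{\mathrm{rec}}_{s,r}=\mathcal M^{\mathrm{evo}}_{s,r}$ on $s+r\le N_p$ would already suffice.
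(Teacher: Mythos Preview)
Your proposal is correct and follows exactly the paper's own route: the paper derives the cell-wise error identity
\(\widehat{\mathbf U}^{\text{tot}}_h(\tilde I_i)-\mathbf U^{\text{tot}}_h(\tilde I_i)=\sum_{s+r\le N_p}\mathbf a_{s,r}\bigl(\mathcal M^{\mathrm{rec}}_{s,r}(\tilde I_i)-\mathcal M^{\mathrm{evo}}_{s,r}(\tilde I_i)\bigr)\)
immediately before the proposition and then simply states that the result follows ``immediately''. Your explicit handling of the species-wise upgrade via the single-material structure of merged cells, and your remark that only the moments with $s+r\le N_p$ are actually needed, are helpful elaborations that the paper leaves implicit.
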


\paragraph{Global error in a uniform state}
Figure \ref{fig:face_rec} and Eq.~\eqref{equ:semidisc_M} imply
\[
\sum_i\mathcal M_{s,r}^{\mathrm{evo}}(\tilde I_i)
=\sum_i\mathcal M_{s,r}^{\mathrm{rec}}(\tilde I_i)\qquad(\forall s,r\in \mathbb{N}).
\]
If every merged cell adjacent to the interface shares the same average value $\mathbf U^\ast$ 
(i.e., \(\mathbf U^{\text{tot}}_h/\mathcal M_{0,0}^{\text{evo}}=\mathbf U^\ast\)),
the polynomial in Eq.~\eqref{equ:red_p} degenerates to
\(\mathbf p(x,y)=\mathbf U^\ast\). Therefore,
\[
\sum_i\widehat{\mathbf U}^{\text{tot}}_h(\tilde I_i)
=\mathbf U^\ast\sum_i\mathcal M_{0,0}^{\text{rec}}(\tilde I_i)
=\mathbf U^\ast\sum_i\mathcal M_{0,0}^{\text{evo}}(\tilde I_i)
=\sum_i\mathbf U^{\text{tot}}_h(\tilde I_i).
\]

Hence, we obtain the proposition below.
\begin{proposition}[Global conservation in a uniform state]
	\label{prop:total-conserv-error}
	If all merged cells near the interface have the same average value $\mathbf U^\ast$, the redistribution step preserves the total mass,
	momentum, and energy; under these conditions the GPR cut-cell scheme is globally conservative.
\end{proposition}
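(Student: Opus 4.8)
The plan is to localize the only operation that can break conservation and then show it is harmless in a uniform state. As already observed at the start of this subsection, every other ingredient is exactly conservative: the SSPRK3 update of $\mathbf{U}^{\mathrm{tot}}_h$ uses the conservative fluxes of \eqref{equ:semidiscre-totalU} (the LLF flux is a single numerical flux shared by adjacent Cartesian cells, while the multi-material interface flux \eqref{equ:multimedia flux} carries a common pressure and contact speed on the two sides of $\Gamma_h$ with opposite outward normals, so its mass, momentum, and energy contributions cancel pairwise), the cell-merging of Step~2 is a pure summation of subcell quantities, and the repartitioning of Step~4 merely subdivides a region. Hence it suffices to prove that, when every merged cell $\tilde{I}_i$ near $\Gamma_h$ satisfies $\mathbf{U}^{\mathrm{tot}}_h(\tilde{I}_i)=\mathbf{U}^{\ast}\,\mathcal{M}^{\mathrm{evo}}_{0,0}(\tilde{I}_i)$, the EC redistribution \eqref{equ:redistribute_1}--\eqref{equ:redistribute_2} leaves $\sum_i\mathbf{U}^{\mathrm{tot}}_h(\tilde{I}_i)$ unchanged.

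First I would invoke the EC property (Definition~\ref{def:EC property}): the stencil used to build the reconstruction polynomial \eqref{equ:red_p} consists entirely of cells with the common average $\mathbf{U}^{\ast}$, so the EC reconstruction returns the constant field $\mathbf{p}(x,y)\equiv\mathbf{U}^{\ast}$; this is consistent with the conservation constraint \eqref{equ:red_p_con} precisely because $\mathbf{U}^{\mathrm{tot}}_h(\tilde{I}_i)=\mathbf{U}^{\ast}\,\mathcal{M}^{\mathrm{evo}}_{0,0}(\tilde{I}_i)$. Substituting $\mathbf{p}\equiv\mathbf{U}^{\ast}$ into \eqref{equ:redistribute_1}--\eqref{equ:redistribute_2} gives $\widehat{\mathbf{U}}^{\mathrm{tot}}_h(c)=\mathbf{U}^{\ast}\,\mathcal{M}^{\mathrm{rec}}_{0,0}(c)$ for each child cut cell $c$ of $\tilde{I}_i$, and additivity of $\mathcal{M}^{\mathrm{rec}}_{0,0}$ over the children yields $\widehat{\mathbf{U}}^{\mathrm{tot}}_h(\tilde{I}_i)=\mathbf{U}^{\ast}\,\mathcal{M}^{\mathrm{rec}}_{0,0}(\tilde{I}_i)$.

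Next I would prove the $s=r=0$ case of the moment balance $\sum_i\mathcal{M}^{\mathrm{rec}}_{0,0}(\tilde{I}_i)=\sum_i\mathcal{M}^{\mathrm{evo}}_{0,0}(\tilde{I}_i)$ at $t^{n+1}$, i.e.\ the classical-GCL identity between total reconstructed and total evolved area. Each interface segment $e_{I,\beta}$ bounds exactly two merged cells, one per material, on which the Gauss points, weights, and segment lengths coincide but the outward normals --- hence the normal speeds $v_{\perp}$ --- are exact opposites, while the Cartesian-edge terms in \eqref{equ:semidisc_M} vanish since $v_{\perp}=0$ there. Summing the operator $\mathcal{L}(\mathcal{M}^{\mathrm{evo}}_{0,0})$ over all merged cells therefore cancels stage by stage in the SSPRK3 march, so $\sum_i\mathcal{M}^{\mathrm{evo}}_{0,0}(\tilde{I}_i)$ is time-invariant; since the union of all merged cells equals the fixed union of the cut Cartesian cells, whose area does not change, this invariant equals $\sum_i\mathcal{M}^{\mathrm{rec}}_{0,0}(\tilde{I}_i)$ at every time level.

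Combining the two steps, $\sum_i\widehat{\mathbf{U}}^{\mathrm{tot}}_h(\tilde{I}_i)=\mathbf{U}^{\ast}\sum_i\mathcal{M}^{\mathrm{rec}}_{0,0}(\tilde{I}_i)=\mathbf{U}^{\ast}\sum_i\mathcal{M}^{\mathrm{evo}}_{0,0}(\tilde{I}_i)=\sum_i\mathbf{U}^{\mathrm{tot}}_h(\tilde{I}_i)$, so the redistribution preserves total mass, momentum, and energy; together with the conservativity of all remaining steps this makes the scheme globally conservative in this regime. The hard part will be the geometric bookkeeping in the third step: rigorously identifying the union of merged cells with a fixed collection of whole Cartesian cells over a time step (correctly accounting for merged cells that straddle two Cartesian cells as in Figure~\ref{fig:face_rec}, for cut cells left unmerged, and for any change in which cells are cut), and verifying that the two sides of every interface segment share identical quadrature data with opposite outward normal. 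Once that is in place, the remainder is merely the specialization of \eqref{equ:redistribute_1}--\eqref{equ:redistribute_2} to the degenerate polynomial $\mathbf{p}\equiv\mathbf{U}^{\ast}$.
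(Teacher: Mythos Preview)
Your proposal is correct and follows the same approach as the paper: degenerate the reconstruction to $\mathbf p\equiv\mathbf U^{\ast}$, invoke the global moment identity $\sum_i\mathcal M_{0,0}^{\mathrm{evo}}(\tilde I_i)=\sum_i\mathcal M_{0,0}^{\mathrm{rec}}(\tilde I_i)$ (which the paper asserts for all $s,r$ via exactly the pairwise cancellation of interface contributions in \eqref{equ:semidisc_M} that you spell out), and chain the equalities. One minor point of rigor: the degeneration $\mathbf p\equiv\mathbf U^{\ast}$ follows from constant-preservation of the reconstruction (Lemma~\ref{lem:recon_linear} together with $\sum_k\omega_k=1$), not from the EC property of Definition~\ref{def:EC property}, which by itself only pins down velocity and pressure.
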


Propositions \ref{prop:species-mass-error}–\ref{prop:total-conserv-error} offer the following practical insights:
\begin{enumerate}\setlength{\itemsep}{2pt}
	\item The cell-wise error is proportional to
	\(\lvert
	\mathcal M_{s,r}^{\text{rec}}
	-\mathcal M_{s,r}^{\text{evo}}
	\rvert\).
	Employing a more accurate interface-reconstruction technique and a time discretization scheme of order higher than SSPRK3 will reduce this discrepancy.
	In principle, enforcing $\mathcal{M}_{s,r}^{\text{evo}}=\mathcal{M}_{s,r}^{\text{rec}}$
	would deliver a fully conservative, pressure‑equilibrium‑preserving
	scheme. Such full conservation is expected to sacrifice the GPR property and would place stringent
	accuracy demands on the interface‑capturing algorithm.
	
	\item The global error diminishes as the local solution near the interface becomes smoother. When the solution is a (single-material) constant state, the conservation errors of total mass, momentum, and energy remain at machine precision.
\end{enumerate}

Although the present GPR scheme is not strictly conservative, the conservation error introduced in the redistribution procedure is high-order vanishing and numerically negligible (See Test \ref{sec:test:smooth_single} in Section \ref{sec:numerical_tests}).

\subsection{Proof of Theorem~\ref{thm:eulerforward pe}}\label{sec:proof of pe} We begin with the following trivial lemma.
\begin{lemma}[\cite{quarteroni2006numerical}]\label{lem:gauss_exact}
	Gauss quadrature can integrate constants exactly.
\end{lemma}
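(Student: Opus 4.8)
The plan is to reduce the claim to the single defining property of Gaussian quadrature, namely its \emph{degree of exactness}. Recall that an $n$-point Gauss rule on an interval is constructed precisely so that it integrates every polynomial of degree at most $2n-1$ exactly; see \cite{quarteroni2006numerical}. A constant function is a polynomial of degree zero, and $2n-1\ge 1\ge 0$ for every $n\ge 1$, so exactness for constants is an immediate special case. This one-line argument already suffices, but for completeness I would also record the equivalent, more elementary justification that does not appeal to the full $2n-1$ degree bound.

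The elementary route uses the partition-of-unity property of the underlying interpolation basis. Writing the Gauss weights on a segment $e$ as $\omega_k = \int_e L_k(s)\,\mathrm{d}s$, where $\{L_k\}$ are the Lagrange basis polynomials associated with the Gauss nodes $\{x_k\}$, a constant $f\equiv c$ is interpolated exactly by itself, whence
\[
\sum_k \omega_k\, f(x_k)
= c\sum_k \int_e L_k(s)\,\mathrm{d}s
= c\int_e \Bigl(\sum_k L_k(s)\Bigr)\mathrm{d}s
= c\int_e 1\,\mathrm{d}s
= c\,|e|,
\]
which equals $\int_e c\,\mathrm{d}s$ exactly. The only fact invoked here is the identity $\sum_k L_k(s)\equiv 1$, a standard property of any Lagrange interpolation basis.

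There is essentially no genuine obstacle to overcome; the only point requiring care is the bookkeeping of the weight normalization as it appears in the paper's quadrature formula~\eqref{equ:semidisc_M}. In that notation the discrete rule multiplies a reference weight $\omega(\mathbf{x}^{(\ell)})$ by the segment length $|e(\mathbf{x}^{(\ell)})|$, so the operative content of the lemma is precisely that the reference weights on each segment sum to unity, $\sum_{\ell}\omega(\mathbf{x}^{(\ell)})=1$. Confirming this normalization shows that the quadrature reproduces $\int_e c\,\mathrm{d}s = c\,|e|$ segment by segment, which is exactly the form in which the lemma is subsequently invoked in the pressure-equilibrium argument of Theorem~\ref{thm:eulerforward pe}.
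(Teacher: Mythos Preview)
Your argument is correct. Note, however, that the paper does not actually supply a proof of this lemma: it is stated as ``trivial'' and simply cited to \cite{quarteroni2006numerical}. Your write-up therefore goes well beyond what the paper does; the degree-of-exactness observation in your first paragraph already matches the spirit of the paper's treatment, and the Lagrange-basis computation and the remark on the weight normalization in~\eqref{equ:semidisc_M} are additional (correct) elaborations that the paper omits.
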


For clarity, let 
$\mathbf{U}^{\text{tot}}_h=(\rho^{\text{tot}},m_x^{\text{tot}},m_y^{\text{tot}},E^{\text{tot}})^\top=\int_{I} \mathbf{U}\, \mathrm{d}x\mathrm{d}y$
denote the total (i.e., cell-integrated) density $\rho$, $x$-momentum $\rho v_x$, $y$-momentum $\rho v_y$, and energy $E$ in a cell \(I\). Furthermore, let 
\begin{equation}\label{equ:ave def}
	\overline{\mathbf{U}}_h^n=(\overline{\rho},\overline{m}_x,\overline{m}_y,\overline{E})^\top=\frac{\mathbf{U}^{\text{tot},n}}{\mathcal{M}^{\text{evo},n}_{0,0}}
\end{equation} denote the cell-averaged conserved variables at $t=t^n$. Consider a merged cell $\tilde{I}$, and let $\tilde{\Gamma}=\bigcup_{\beta=1}^{N_I}e_{I,\beta}$ denote the interface edges of $\tilde{I}$. Then, we have the following lemma:

\begin{lemma}\label{lem:L(U)}
	If the cell-averaged conserved variables in the neighboring cells of the merged cell $\tilde{I}$ exhibit a constant velocity $\mathbf{v}^*=(v_x^*,v_y^*)$ and a constant pressure $p^*$, then the following relations hold:
	\begin{equation}\label{equ:L(U)}
		\begin{cases}
			\mathcal{L}(m_x^{\text{tot}})=v_x^*\mathcal{L}(\rho^{\text{tot}}),\\
			\mathcal{L}(m_y^{\text{tot}})=v_y^*\mathcal{L}(\rho^{\text{tot}}),\\
			\mathcal{L}(E^{\text{tot}})=\frac{|\mathbf{v}|^2}{2}\mathcal{L}(\rho^{\text{tot}})+\frac{p^*+\gamma_h B_h}{\gamma_h-1}\int_{\tilde{\Gamma}}(\mathbf{v^*}\cdot\mathbf{n})\mathrm{d}S.
		\end{cases}
	\end{equation}
	Here, $\mathcal{L}(\cdot)$ denote the spatial discretization operator defined in Eq.~\eqref{equ:semidiscre-totalU}, $\gamma_h$ and $B_h$ are the numerical EOS parameters for the merged cell $\tilde{I}$, and ``neighboring cells'' refers to all cells used in computing the Gauss point fluxes for $\tilde{I}$.
\end{lemma}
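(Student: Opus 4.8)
The plan is to combine the \textbf{EC} property of the reconstruction with Lemma~\ref{lem:gauss_exact} (exact quadrature of constants). Abbreviate $C:=\frac{p^{*}+\gamma_h B_h}{\gamma_h-1}$, so that every state in $\mathcal G(\mathbf v^{*},p^{*};\gamma_h,B_h)$ has the form $\mathbf U(\rho)=\bigl(\rho,(\rho\mathbf v^{*})^{\top},C+\tfrac12\rho|\mathbf v^{*}|^{2}\bigr)^{\top}$, $\rho>0$. By hypothesis every cell average feeding the Gauss-point reconstructions lies in this set; hence, by the EC property (Definition~\ref{def:EC property}), \emph{every} reconstructed trace $\mathbf U^{\mathrm{int}}_{C,\alpha,k},\mathbf U^{\mathrm{ext}}_{C,\alpha,k},\mathbf U^{\mathrm{int}}_{I,\beta,\ell},\mathbf U^{\mathrm{ext}}_{I,\beta,\ell}$ is again of the form $\mathbf U(\rho)$ — only the scalar densities $\rho^{\mathrm{int}},\rho^{\mathrm{ext}}$ may differ from point to point.

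First I would evaluate the two numerical fluxes under this structure. On each Cartesian segment, substituting $\mathbf U(\rho^{\mathrm{int}}),\mathbf U(\rho^{\mathrm{ext}})$ into the LLF flux~\eqref{equ:LLF flux} and writing the scalar mass flux $\widehat F^{\rho}:=\tfrac12\bigl[(\rho^{\mathrm{int}}+\rho^{\mathrm{ext}})(\mathbf v^{*}\!\cdot\mathbf n)-a_{\max}(\rho^{\mathrm{ext}}-\rho^{\mathrm{int}})\bigr]$, a short computation collapses the four components to $\widehat{\mathbf F}^{\mathrm{LLF}}=\bigl(\widehat F^{\rho},\,(\mathbf v^{*}\widehat F^{\rho}+p^{*}\mathbf n)^{\top},\,\tfrac12|\mathbf v^{*}|^{2}\widehat F^{\rho}+(C+p^{*})(\mathbf v^{*}\!\cdot\mathbf n)\bigr)^{\top}$; the key point is that the $a_{\max}$-dissipation multiplies only $\rho^{\mathrm{ext}}-\rho^{\mathrm{int}}$ (the constant $C$ cancels from $E^{\mathrm{ext}}-E^{\mathrm{int}}$), so in the momentum and energy rows it factors through $\widehat F^{\rho}$. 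On each interface segment, Remark~\ref{rem:multiriemann} forces the multi-material Riemann solver to return $p=p^{*}$ and $v_{\perp}=\mathbf v^{*}\!\cdot\mathbf n$, so by~\eqref{equ:multimedia flux} the interface flux equals $\bigl(0,\,p^{*}\mathbf n^{\top},\,p^{*}(\mathbf v^{*}\!\cdot\mathbf n)\bigr)^{\top}$ exactly.

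Next I would assemble $\mathcal L(\mathbf U^{\mathrm{tot}}_h)$ from~\eqref{equ:semidiscre-totalU}. The mass row gives $\mathcal L(\rho^{\mathrm{tot}})=-\sum_{\alpha,k}|e|\,\omega\,\widehat F^{\rho}$ (interface terms vanish). In each momentum row, the $\widehat F^{\rho}$-part sums to $v_x^{*}\mathcal L(\rho^{\mathrm{tot}})$ (resp.\ $v_y^{*}\mathcal L(\rho^{\mathrm{tot}})$) and the only remaining contribution is $-p^{*}\sum|e|\,\omega\,\mathbf n$ over all Cartesian and interface Gauss points; since $\mathbf n$ is constant on each segment, Lemma~\ref{lem:gauss_exact} collapses this to $-p^{*}\!\int_{\partial\tilde I}\mathbf n\,\mathrm{d}S=\mathbf 0$, so $\mathcal L(m_x^{\mathrm{tot}})=v_x^{*}\mathcal L(\rho^{\mathrm{tot}})$ and $\mathcal L(m_y^{\mathrm{tot}})=v_y^{*}\mathcal L(\rho^{\mathrm{tot}})$. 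For the energy row, the $\widehat F^{\rho}$-terms reproduce $\tfrac12|\mathbf v^{*}|^{2}\mathcal L(\rho^{\mathrm{tot}})$ and leave $-(C+p^{*})\!\int_{\mathcal E_C}(\mathbf v^{*}\!\cdot\mathbf n)\,\mathrm{d}S-p^{*}\!\int_{\tilde\Gamma}(\mathbf v^{*}\!\cdot\mathbf n)\,\mathrm{d}S$ (again by Lemma~\ref{lem:gauss_exact}, as $\mathbf v^{*}\!\cdot\mathbf n$ is piecewise constant). Since $\int_{\mathcal E_C}(\mathbf v^{*}\!\cdot\mathbf n)\,\mathrm{d}S=-\int_{\tilde\Gamma}(\mathbf v^{*}\!\cdot\mathbf n)\,\mathrm{d}S$ (divergence theorem with $\nabla\!\cdot\mathbf v^{*}=0$), the $p^{*}$-pieces cancel and only $C\!\int_{\tilde\Gamma}(\mathbf v^{*}\!\cdot\mathbf n)\,\mathrm{d}S$ survives — precisely the claimed third identity.

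The main obstacle is bookkeeping rather than ideas: one must carefully keep the vanishing boundary velocity $\mathbf v_b\!\cdot\mathbf n=0$ on Cartesian segments (which is why~\eqref{equ:LLF flux} carries no ALE correction) distinct from the generally nonzero fluid normal velocity $\mathbf v^{*}\!\cdot\mathbf n$ that still appears inside the physical flux there, and must check in the energy row that splitting the Cartesian-edge flux into its $C$- and $p^{*}$-parts is exactly what lets it recombine with the interface flux into a single integral over $\tilde\Gamma$. Everything else is elementary once the flux evaluations above are in hand.
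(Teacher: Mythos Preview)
Your proposal is correct and follows essentially the same route as the paper: invoke the EC property (plus Remark~\ref{rem:multiriemann} on interface edges) to reduce every Gauss-point flux to a $\widehat F^{\rho}$-part plus constant multiples of $\mathbf n$ or $\mathbf v^{*}\!\cdot\mathbf n$, then use Lemma~\ref{lem:gauss_exact} and the divergence theorem to kill the latter. The only cosmetic slip is that $\mathbf U^{\mathrm{ext}}_{I,\beta,\ell}$ lives in $\mathcal G(\mathbf v^{*},p^{*};\gamma',B')$ with the \emph{other} material's constants, not with your $C$; this is harmless because, as you do, the interface flux~\eqref{equ:multimedia flux} only consumes $p$ and $v_\perp$ from that trace.
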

\begin{proof}
	From equations \eqref{equ:semidiscre-totalU}, \eqref{equ:multimedia flux} and \eqref{equ:LLF flux}, we immediately obtain
	$$\mathcal{L}(\rho^\text{tot})\!=\!-\sum_{\alpha=1}^{N_C}\sum_{k=1}^{2}|e(\mathbf{x}_{C,\alpha}^{(k)})|\frac{\omega(\mathbf{x}_{C,\alpha}^{(k)})}{2}\left(\left(\rho^{\mathrm{int}}_{C,\alpha,k}+\rho^{\mathrm{ext}}_{C,\alpha,k}\right)\left(\mathbf{v^*}\!\cdot\!\mathbf{n}\right)\!-\!a_{\max}\left(\rho^{\mathrm{ext}}_{C,\alpha,k}\!-\!\rho^{\mathrm{int}}_{C,\alpha,k}\right)\right).$$
	
	\begin{description}
		\item[(a)] To show that $\mathcal{L}(m_x^{\text{tot}})=v_x\mathcal{L}(\rho^{\text{tot}})$ and $\mathcal{L}(m_y^{\text{tot}})=v_y\mathcal{L}(\rho^{\text{tot}})$, we note from equations \eqref{equ:semidiscre-totalU}, \eqref{equ:multimedia flux} and \eqref{equ:LLF flux} that 
		\begin{equation*}
			\begin{aligned}
				\mathcal{L}(m_x^\text{tot})=&-\sum_{\alpha=1}^{N_C}\sum_{k=1}^{2}|e(\mathbf{x}_{C,\alpha}^{(k)})|\frac{\omega(\mathbf{x}_{C,\alpha}^{(k)})}{2}\left(\left(\rho^{\mathrm{int}}_{C,\alpha,k}+\rho^{\mathrm{ext}}_{C,\alpha,k}\right)(\mathbf{v^*}\cdot\mathbf{n})v_x^*\right.\\
				&\left.-a_{\max} v_x^*\left(\rho^{\mathrm{ext}}_{C,\alpha,k}-\rho^{\mathrm{int}}_{C,\alpha,k}\right)+2p^*n_x\right)-\sum_{\beta=1}^{N_I}\sum_{\ell=1}^{2}|e(\mathbf{x}_{I,\beta}^{(\ell)})|\omega(\mathbf{x}_{I,\beta}^{(\ell)})p^*n_x\\
				=&-p^*\left(\sum_{\alpha=1}^{N_C}\sum_{k=1}^{2}|e(\mathbf{x}_{C,\alpha}^{(k)})|\omega(\mathbf{x}_{C,\alpha}^{(k)})n_x+\sum_{\beta=1}^{N_I}\sum_{\ell=1}^{2}|e(\mathbf{x}_{I,\beta}^{(\ell)})|\omega(\mathbf{x}_{I,\beta}^{(\ell)})n_x\right)\\
				&+v_x^*\mathcal{L}(\rho^{\text{tot}}),
			\end{aligned}
		\end{equation*}
		where the first equality follows from the EC property (see Definition \ref{def:EC property}) of the reconstruction algorithm and from the property of the Riemann solver illustrated in Remark \ref{rem:multiriemann}. Moreover, by Lemma \ref{lem:gauss_exact}, we have
		\begin{equation*}
			\begin{aligned}
				&\left(\sum_{\alpha=1}^{N_C}\sum_{k=1}^{2}|e(\mathbf{x}_{C,\alpha}^{(k)})|\omega(\mathbf{x}_{C,\alpha}^{(k)})n_x+\sum_{\beta=1}^{N_I}\sum_{\ell=1}^{2}|e(\mathbf{x}_{I,\beta}^{(\ell)})|\omega(\mathbf{x}_{I,\beta}^{(\ell)})n_x\right)\\
				=&\int_{\partial{\tilde{I}}}n_x\mathrm{d}S=\int_{\tilde{I}}\nabla\cdot(1,0)\mathrm{d}x\mathrm{d}y=0,
			\end{aligned}
		\end{equation*}
		hence $\mathcal{L}(m_x^\text{tot})=v_x^*\mathcal{L}(\rho^{\text{tot}})$. A similar argument establishes that $\mathcal{L}(m_y^\text{tot})=v_y^*\mathcal{L}(\rho^{\text{tot}}).$
		\item[(b)] To prove $\mathcal{L}(E^{\text{tot}})=\frac{|\mathbf{v}|^2}{2}\mathcal{L}(\rho^{\text{tot}})+\frac{p^*+\gamma_h B_h}{\gamma_h-1}\int_{\tilde{\Gamma}}(\mathbf{v^*}\cdot\mathbf{n})\mathrm{d}S,$ we again use equations \eqref{equ:semidiscre-totalU}, \eqref{equ:multimedia flux} and \eqref{equ:LLF flux} to obtain 
		\begin{equation}
			\begin{aligned}
				\mathcal{L}(E^\text{tot})
				=&-\sum_{\alpha=1}^{N_C}\sum_{k=1}^{2}|e(\mathbf{x}_{C,\alpha}^{(k)})|\frac{\omega(\mathbf{x}_{C,\alpha}^{(k)})}{2}\left(\left(E^{\mathrm{int}}_{C,\alpha,k}+E^{\mathrm{ext}}_{C,\alpha,k}\right)(\mathbf{v^*}\!\cdot\!\mathbf{n})\right.\\
				&\left.-a_{\max}\left(E^{\mathrm{ext}}_{C,\alpha,k}-E^{\mathrm{int}}_{C,\alpha,k}\right)+2p^*(\mathbf{v^*}\!\cdot\!\mathbf{n})\right)-\sum_{\beta=1}^{N_I}\sum_{\ell=1}^{2}|e(\mathbf{x}_{I,\beta}^{(\ell)})|\omega(\mathbf{x}_{I,\beta}^{(\ell)})p^*(\mathbf{v^*}\!\cdot\!\mathbf{n})\\
				=&-p^*\left(\sum_{\alpha=1}^{N_C}\sum_{k=1}^{2}|e(\mathbf{x}_{C,\alpha}^{(k)})|\omega(\mathbf{x}_{C,\alpha}^{(k)})(\mathbf{v^*}\cdot\mathbf{n})+\sum_{\beta=1}^{N_I}\sum_{\ell=1}^{2}|e(\mathbf{x}_{I,\beta}^{(\ell)})|\omega(\mathbf{x}_{I,\beta}^{(\ell)})(\mathbf{v^*}\cdot\mathbf{n})\right)\\
				&\!-\!\sum_{\alpha=1}^{N_C}\!\sum_{k=1}^{2}|e(\mathbf{x}_{C\!,\alpha}^{(k)})|\frac{\omega(\mathbf{x}_{C\!,\alpha}^{(k)})}{2}\!\left(\left(E^{\mathrm{int}}_{C\!,\alpha,k}\!+\!E^{\mathrm{ext}}_{C\!,\alpha,k}\right)\!(\mathbf{v^*}\!\cdot\!\mathbf{n})\!-\!a_{\max}\!\left(E^{\mathrm{ext}}_{C\!,\alpha,k}\!-\!E^{\mathrm{int}}_{C\!,\alpha\!,k}\right)\right)\\
				=&-p^*\int_{\partial{\tilde{I}}}(\mathbf{v^*}\!\cdot\!\mathbf{n})\mathrm{d}S+\frac{|\mathbf{v}^*|^2}{2}\mathcal{L}(\rho^\text{tot})-\frac{p^*+\gamma_h B_h}{\gamma_h-1}\int_{\partial{\tilde{I}}-\tilde{\Gamma}}(\mathbf{v^*}\!\cdot\!\mathbf{n})\mathrm{d}S.
			\end{aligned}
		\end{equation}
		Since $$\int_{\partial{\tilde{I}}}(\mathbf{v^*}\!\cdot\!\mathbf{n})\mathrm{d}S=\int_{\tilde{I}_i}\nabla\cdot(v_x,v_y)\mathrm{d}x\mathrm{d}y=0,$$
		we have $\mathcal{L}(E^\text{tot})=\frac{|\mathbf{v}^*|^2}{2}\mathcal{L}(\rho^\text{tot})+\frac{p^*+\gamma_h B_h}{\gamma_h-1}\int_{\tilde{\Gamma}}(\mathbf{v^*}\!\cdot\!\mathbf{n})\mathrm{d}S$. This completes the proof of the lemma.
	\end{description}
\end{proof}

Next, we introduce the increment operator $\delta(\cdot)$ to denote the change of a quantity (in a given cell) from time $t^n$ to $t^{n+1}$. For instance, $\delta(\rho^{\text{tot}})=\rho^{\text{tot}}(t^{n+1})-\rho^{\text{tot}}(t^{n})$. With this definition, we can prove the following lemma:
\begin{lemma}\label{lem:delta}
	$\delta(xy)=x\delta(y)+y\delta(x)+\delta(x)\delta(y).$
\end{lemma}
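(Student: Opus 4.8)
The plan is to unwind the definition of the increment operator $\delta(\cdot)$ and expand a product. By definition, for any scalar quantity $q$ attached to a fixed cell, $\delta(q) = q(t^{n+1}) - q(t^{n})$, equivalently $q(t^{n+1}) = q(t^{n}) + \delta(q)$. In the claimed identity the undifferenced factors $x$ and $y$ on the right-hand side are read as the old-time values $x(t^{n})$ and $y(t^{n})$, while $\delta(xy)$ denotes $\bigl(xy\bigr)(t^{n+1}) - \bigl(xy\bigr)(t^{n})$.

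First I would write the product at the new time level using the substitutions above,
\[
(xy)(t^{n+1}) = \bigl(x(t^{n}) + \delta(x)\bigr)\bigl(y(t^{n}) + \delta(y)\bigr)
= x(t^{n})y(t^{n}) + x(t^{n})\,\delta(y) + y(t^{n})\,\delta(x) + \delta(x)\,\delta(y),
\]
and then subtract $(xy)(t^{n}) = x(t^{n})y(t^{n})$ from both sides. What remains is exactly $\delta(xy) = x\,\delta(y) + y\,\delta(x) + \delta(x)\,\delta(y)$, which is the assertion.

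There is essentially no obstacle here: the lemma is just the discrete Leibniz (product) rule, and the only point requiring care is the bookkeeping convention that the two undifferenced factors on the right are frozen at $t^{n}$ (one could equally symmetrize, but the asymmetric form stated is the one that will be convenient in the subsequent manipulations). The identity is recorded in this abstract two-factor form so that it can later be invoked with $x$ and $y$ specialized to geometric moments $\mathcal M^{\mathrm{evo}}_{s,r}$, components of $\mathbf U^{\mathrm{tot}}_h$, or reconstructed point values, without repeating the computation each time.
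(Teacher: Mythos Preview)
Your proof is correct and is essentially identical to the paper's: both expand $(x+\delta x)(y+\delta y)-xy$ directly. Your added remark clarifying that the undifferenced factors are evaluated at $t^{n}$ is a helpful elaboration, but the underlying argument is the same one-line computation.
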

\begin{proof}
	$\delta(xy)=(x+\delta x)(y+\delta y)-xy=x\delta(y)+y\delta(x)+\delta(x)\delta(y)$.
\end{proof}

We now have sufficient machinery to prove Theorem \ref{thm:eulerforward pe}.

\begin{proof}[Proof of Theorem~\ref{thm:eulerforward pe}]
	Fixed Cartesian cells can be viewed simply as a special ``merged cell", hence it is sufficient to analyze only the general merged-cell case in the proof of Theorem~\ref{thm:eulerforward pe}. Assume that at $t=t^n$, the cell-averaged conserved variables in the neighboring cells of a merged cell $\tilde{I}$ exhibit a constant velocity $\mathbf{v}^*=(v_x^*,v_y^*)$ and a constant pressure $p^*$. Then, after one time step, the cell volume becomes 
	\begin{equation}\label{equ:L(M) euler}
		\mathcal{M}^{\text{evo},n+1}_{0,0}=\mathcal{M}^{\text{evo},n}_{0,0}+\Delta t\mathcal{L}(\mathcal{M}^{\text{evo},n}_{0,0}),
	\end{equation} and the conserved variables become 
	\begin{equation}\label{equ:L(U) euler}
		\mathbf{U}^{\text{tot},n+1}_h=\mathbf{U}^{\text{tot},n}_h+\Delta t\mathcal{L}(\mathbf{U}^{\text{tot},n}_h).
	\end{equation}
	Here, the spatial discretization operator $\mathcal{L}$ is defined in \eqref{equ:semidisc_M} and \eqref{equ:semidiscre-totalU}. We now prove that, $\overline{\mathbf{U}}^{n+1}_h$ in $\tilde{I}_i$ remains at pressure $p^*$ and velocity $\mathbf{v}^*$.
	\begin{description}
		\item[(1)] First, we prove that the velocity remains $\mathbf{v}^*$. Since $m_x^{\text{tot}}=\mathcal{M}_{0,0}^{\text{evo},n}\cdot(\overline{\rho}v_x)=\rho^{\text{tot}}v_x$, by applying Lemma \ref{lem:delta} and \eqref{equ:L(U) euler}, we have 
		\begin{equation}
			\delta(v_x)=\frac{\delta{m_x^{\text{tot}}}-v_x\delta{\rho^{\text{tot}}}}{\rho^{\text{tot}}+\delta{\rho^{\text{tot}}}}
			=\Delta t \frac{\mathcal{L}({m_x^{\text{tot}}})-v_x\mathcal{L}({\rho^{\text{tot}}})}{\rho^{\text{tot}}+\mathcal{L}({\rho^{\text{tot}}})}.
		\end{equation} 
		Combined with equation \eqref{equ:L(U)} in Lemma \ref{lem:L(U)}, we obtain $\delta(v_x)=0$. Similarly, $\delta(v_y)=0$, so the velocity remains unchanged after one time step.
		
		\item[(2)] Next, we prove that the pressure remains $p^*$. From equations \eqref{equ:Ep} and \eqref{equ:EOS}, we deduce that 
		\begin{equation}
			\begin{aligned}
				\delta(p)&=\delta\left((\gamma_h-1)\left(\overline{E}-\frac12\overline{\rho}|\mathbf{v}^*|^2\right)-\gamma_h B_h\right)\\
				&\xlongequal{\delta{v_x}=\delta{v_y}=0}(\gamma_h-1)\left(\delta{\overline{E}}-\frac{|\mathbf{v}^*|^2}{2}\delta{\overline{\rho}}\right)\\
				&\xlongequal{\text{Lemma \ref{lem:delta}}}\frac{\gamma_h-1}{\mathcal{M}_{0,0}^{\text{evo},n}+\delta(\mathcal{M}_{0,0}^{\text{evo},n})}\left(\delta{E^{\text{tot}}}-\overline{E}\delta(\mathcal{M}_{0,0}^{\text{evo},n})\right.\\
				&~~~~~~~~~~~~~~~~\left.-\frac{|\mathbf{v}^*|^2}{2}\delta{\rho^{\text{tot}}}+\frac{|\mathbf{v}^*|^2}{2}\overline{\rho}\delta(\mathcal{M}_{0,0}^{\text{evo},n})\right)\\
				&\xlongequal{\text{Lemma \ref{lem:L(U)}}}\frac{\gamma_h-1}{\mathcal{M}_{0,0}^{\text{evo},n}+\delta(\mathcal{M}_{0,0}^{\text{evo},n})}\left(\Delta t\left(\frac{p^*+\gamma_h B_h}{\gamma_h-1}\int_{\tilde{\Gamma}}(\mathbf{v}^*\!\cdot\!{\mathbf{n}})\mathrm{d}S\right)\right.\\
				&~~~~~~~~~~~~~~~~\left.-\left(\overline{E}-\frac{|\mathbf{v}^*|^2}{2}\overline{\rho}\right)\delta(\mathcal{M}_{0,0}^{\text{evo},n})\right)\\
				&\xlongequal{\text{EOS \eqref{equ:EOS}}}\frac{p^*+\gamma_h B_h}{\mathcal{M}_{0,0}^{\text{evo},n}+\delta(\mathcal{M}_{0,0}^{\text{evo},n})}\left(\Delta t\int_{\tilde{\Gamma}}(\mathbf{v}^*\!\cdot\!{\mathbf{n}})\mathrm{d}S-\delta(\mathcal{M}_{0,0}^{\text{evo},n})\right)\\
				&=\frac{(p^*+\gamma_h B_h)\Delta t}{\mathcal{M}_{0,0}^{\text{evo},n}+\delta(\mathcal{M}_{0,0}^{\text{evo},n})}\left(\int_{\tilde{\Gamma}}(\mathbf{v}^*\!\cdot\!{\mathbf{n}})\mathrm{d}S-\mathcal{L}({\mathcal{M}_{0,0}^{\text{evo},n}})\right).
			\end{aligned}
		\end{equation} 
		By utilizing Eq.~\eqref{equ:semidisc_M} and Lemma \ref{lem:gauss_exact}, we have 
		\begin{equation*}
			\begin{aligned}
				\mathcal{L}({\mathcal{M}_{0,0}^{\text{evo},n}})=&\sum_{\alpha=1}^{N_C}\sum_{k=1}^{2}|e(\mathbf{x}_{C,\alpha}^{(k)})|\omega(\mathbf{x}_{C,\alpha}^{(k)})(\mathbf{v}^*\!\cdot\!{\mathbf{n}})+\sum_{\beta=1}^{N_I}\sum_{\ell=1}^{2}|e(\mathbf{x}_{I,\beta}^{(\ell)})|\omega(\mathbf{x}_{I,\beta}^{(\ell)})(\mathbf{v}^*\!\cdot\!{\mathbf{n}})\\
				=&\int_{\tilde{\Gamma}}(\mathbf{v}^*\!\cdot\!{\mathbf{n}})\mathrm{d}S.
			\end{aligned}
		\end{equation*}
		Hence, $\delta(p)=0$, the pressure remains $p^*$.
	\end{description}
	Since both the velocity and the pressure remain unchanged after one time step, $
	\overline{\mathbf{U}}_{h}^{\,n+1}\in\mathcal{G}(\mathbf{v}^*,p^*;\gamma_h, B_h)$, the proof is complete.
\end{proof}

\begin{remark}
	The proof idea of Theorem~\ref{thm:eulerforward pe} extends	directly to far more general configurations---be they unstructured (but stationary) meshes, or more general moving-mesh	methods. In particular, it follows immediately that fixed unstructured-mesh schemes solving Eq.~\eqref{equ:euler}, when utilizing an EC reconstruction, will naturally preserve pressure equilibrium.
\end{remark}

\subsection{Proof of Lemma~\ref{lem:pe_2}}
\label{sec:red_pe}
	Consider the configuration in Figure \ref{fig:face_rec}, assume the
	merged cell
	\(
	\tilde I_{1}=I_{i,j}^{1}\cup I_{i+1,j}^{1}
	\)
	is surrounded by neighbors that share the uniform velocity
	\(\mathbf v^\ast\) and pressure \(p^\ast\).
	The EC reconstruction yields the vector polynomial
	\[
	\mathbf U(x,y)\;\approx\;\mathbf p(x,y)
	\;=\;
	\sum_{s+r\le N_p}\mathbf a_{s,r}\,x^{s}y^{r},
	\qquad
	\mathbf a_{s,r}\in\mathbb R^{4}.
	\]
	Because the reconstruction is EC,
	\(
	\mathbf p(x,y)\in\mathcal G(p^\ast,\mathbf v^\ast;\gamma_h,B_h)
	\)
	for all \((x,y)\). Hence
	\(
	\mathbf a_{0,0}\in\mathcal G(p^\ast,\mathbf v^\ast;\gamma_h,B_h)
	\)
	and, for every \(s+r>0\),
	\begin{equation*}
		\mathbf a_{s,r}\in\mathcal H(p^\ast,\mathbf v^\ast):=\;
		\left\{\,\left(\rho,(\rho\mathbf v^\ast)^\top,\tfrac12\rho|\mathbf v^\ast|^2\right)^\top
		\;\middle|\;\rho>0\right\}.
	\end{equation*}
	Dividing Eq.~\eqref{equ:redistribute_1} by the reconstructed volume
	\(
	\mathcal M_{0,0}^{\mathrm{rec}}(I_{i,j}^{1})
	\)
	gives the post-redistribution cell average
	\begin{equation*}
		\overline{\mathbf U}_h(I_{i,j}^{1})
		=\frac{\mathbf U^{\text{tot}}_h(I_{i,j}^{1})}
		{\mathcal M_{0,0}^{\mathrm{rec}}(I_{i,j}^{1})}
		=\mathbf a_{0,0}
		+\sum_{s+r>0}
		\frac{\mathcal M_{s,r}^{\mathrm{rec}}(I_{i,j}^{1})}
		{\mathcal M_{0,0}^{\mathrm{rec}}(I_{i,j}^{1})}\,
		\mathbf a_{s,r}.
	\end{equation*}
	Since \(\mathbf a_{0,0}\in\mathcal G(p^\ast,\mathbf v^\ast;\gamma_h,B_h)\), it follows that
	\(
	\overline{\mathbf U}(I_{i,j}^{1})\in
	\mathcal G(p^\ast,\mathbf v^\ast;\gamma_h,B_h).
	\)
	The same argument applies to $I_{i+1,j}^{1}$.  Hence redistribution preserves pressure equilibrium for both cut cells $I_{i,j}^{1}$ and $I_{i+1,j}^{1}$. The proof for a general merged‐cell geometry proceeds identically.
\section{EC reconstruction: Third-order EC-MRWENO reconstruction method for cut-cell meshes}\label{sec:WENO}
As illustrated in Subsection~\ref{subsec: EC1}, an equilibrium-compatible (EC) reconstruction is crucial for both GPR property and pressure‐equilibrium preservation. In this section, we analyze the
polynomial reconstruction for the conserved variables and establish a
general theoretical framework for its EC property.  Building on this
foundation, we introduce a simple modification that endows general WENO reconstructions---including component-wise variants
\cite{MARTI20141,LI2023127583}---with the EC property. Finally, we apply
this modification to the third-order multi-resolution WENO (MRWENO)
reconstruction \cite{ZHU201919}, yielding a third‐order
EC‐MRWENO reconstruction tailored to cut‐cell meshes.

\subsection{Direct polynomial reconstruction naturally satisfies the EC Property}\label{sec:direct_poly_EC}
In this subsection, we prove that direct polynomial reconstruction inherently satisfies the EC property.

For notational simplicity, we hereafter assume that every interface cell is in its merged (but not yet redistributed) state and make no distinction between a merged cell \(\tilde I_i\) and a standard Cartesian cell \(I_{i,j}\): we uniformly index all cells as
\(
I_i \quad (1 \le i \le N),
\)
where \(N\) is the total number of Cartesian cells and merged cells.  Let
\(\mathcal{T}(t^n) = \{I_i\}_{i=1}^N\)
denote the partition of the computational domain at $t=t^n$. Note that, as the interface position evolves over time, both the number of cells and their geometric configurations may change from one time step to the next. We denote by $\overline{U}_{i,l}:=(\overline{\mathbf{U}}_i)_l$ the $l$th component of the cell-averaged conserved variables $\overline{\mathbf{U}}_i=\frac{\mathbf{U}^\text{tot}_h(I_i)}{\mathcal{M}_{0,0}(I_i)}$ in cell $I_i$. Given a stencil $S\subset\mathcal{T}(t^n)$ and a polynomial space $\mathbb{P}_k$ (the space of polynomials of total degree at most $k$), a direct polynomial reconstruction in cell $I_{i_0}$ is an operator that maps the data $\left\{\overline{U}_{i,l}:~i\in S\right\}$ to a polynomial $p_l\in\mathbb{P}_k$ satisfying the following conditions:

\begin{enumerate}
	\item {\textbf{Exact conservation on a designated subset.}} There exists a subset $S_0\subset S$ satisfying $i_0\in S_0$ such that
	\begin{equation}\label{equ: recon_conserv}
		\frac{1}{\mathcal{M}_{0,0}(I_i)}\int_{I_i}p_l(x,y)\mathrm{d}x\mathrm{d}y=\overline{U}_{i,l}\text{  for every  }i\in S_0,
	\end{equation}
	where $\mathcal{M}_{0,0}(I_i)=|I_i|$ is the volume cell $I_i$ (See Eq.~\eqref{def:geo moment M}).
	
	\item {\textbf{Optimal fit over the stencil.}} 
	When the number of cells in $S$ exceeds the number of degrees of freedom in $\mathbb{P}_k$ (i.e., the system is overdetermined), the polynomial $p_l(x,y)$ is determined by minimizing
	\begin{equation}\label{equ: recon_mini}
		\sum_{i\in S \backslash S_0}\alpha_i\left|\frac{1}{\mathcal{M}_{0,0}(I_i)}\int_{I_i}p_l(x,y)\mathrm{d}x\mathrm{d}y-\overline{U}_{i,l}\right|^q,
	\end{equation}
	where $\alpha_j>0$ are user-defined weights and $q\geq 1$ (commonly $q=2$).
\end{enumerate}

Direct polynomial reconstruction satisfies the following property:

\begin{lemma}\label{lem:recon_linear}
	Suppose $p_l(x,y)$ is the polynomial directly reconstructed from the cell-averaged data $\left\{\overline{U}_{i,l}:~j\in S\right\}$. Then, for any constants $c_1$ and $c_2$, $c_1p_l(x,y)+c_2$ is the reconstruction polynomial corresponding to the scaled and shifted data $\left\{c_1\overline{U}_{i,l}+c_2:~i\in S\right\}$.
\end{lemma}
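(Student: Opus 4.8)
The plan is to read off the statement from two elementary structural facts about direct polynomial reconstruction: the reconstruction space $\mathbb{P}_k$ contains the constant functions, and the averaging functional $\ell_i(p):=\frac{1}{\mathcal{M}_{0,0}(I_i)}\int_{I_i}p\,\mathrm{d}x\mathrm{d}y$ is \emph{linear} in $p$ and \emph{reproduces constants}, i.e.\ $\ell_i(c_1p+c_2)=c_1\ell_i(p)+c_2$ (using $\mathcal{M}_{0,0}(I_i)=|I_i|$, so $\ell_i(1)=1$). Crucially, the designated subset $S_0\subset S$, the weights $\alpha_i>0$, and the exponent $q\ge 1$ defining the reconstruction in \eqref{equ: recon_conserv}--\eqref{equ: recon_mini} are all fixed, \emph{independent of the data}. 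So the whole argument amounts to checking that the candidate polynomial $c_1p_l(x,y)+c_2$ — which lies in $\mathbb{P}_k$ — satisfies the two defining conditions of the reconstruction operator applied to the transformed data $\{c_1\overline{U}_{i,l}+c_2:i\in S\}$, and hence equals it.

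First I would handle the generic case $c_1\ne 0$. For feasibility: for every $i\in S_0$, $\ell_i(c_1p_l+c_2)=c_1\ell_i(p_l)+c_2=c_1\overline{U}_{i,l}+c_2$, which is exactly the transformed datum, so $c_1p_l+c_2$ satisfies the exact-conservation condition \eqref{equ: recon_conserv} for the new problem. For optimality: for each $i\in S\setminus S_0$,
\[
\Bigl|\ell_i(c_1p_l+c_2)-\bigl(c_1\overline{U}_{i,l}+c_2\bigr)\Bigr|^{q}
=|c_1|^{q}\,\bigl|\ell_i(p_l)-\overline{U}_{i,l}\bigr|^{q},
\]
so the least-squares objective \eqref{equ: recon_mini} of the transformed problem evaluated at $c_1p_l+c_2$ equals the fixed positive multiple $|c_1|^{q}$ of the original objective evaluated at $p_l$. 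Since $p\mapsto c_1p+c_2$ is an affine bijection of $\mathbb{P}_k$ that maps the feasible set of the original problem onto that of the transformed problem, and since multiplying the objective by the positive constant $|c_1|^{q}$ does not alter its minimizers, the minimizer $p_l$ of the original problem is carried to a minimizer $c_1p_l+c_2$ of the transformed one; that is, $c_1p_l+c_2$ is the reconstruction polynomial for $\{c_1\overline{U}_{i,l}+c_2\}$.

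The degenerate case $c_1=0$ I would dispatch directly: the transformed data are all equal to the constant $c_2$, the constant polynomial $p\equiv c_2\in\mathbb{P}_k$ satisfies \eqref{equ: recon_conserv} on $S_0$ and makes the objective \eqref{equ: recon_mini} vanish (its global minimum), so it is the reconstruction — and this is precisely $0\cdot p_l+c_2$. The only genuinely delicate point, and the ``hard part,'' is that the lemma refers to \emph{the} reconstruction polynomial, which presupposes that \eqref{equ: recon_mini} selects a single polynomial. If multiple minimizers exist, the argument above establishes only that the \emph{set} of minimizers transforms equivariantly under $p\mapsto c_1p+c_2$; to obtain a single-valued operator one must invoke (as is standard for these WENO stencils, and as is implicit in the lemma's wording) either uniqueness of the constrained least-squares minimizer — e.g.\ when $q=2$ and the associated normal equations are nonsingular — or a tie-breaking rule that is itself affine-equivariant. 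Under that hypothesis, equivariance of the minimizer set immediately upgrades to equivariance of the reconstruction operator, which is the claim.
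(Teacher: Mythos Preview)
Your proof is correct and follows essentially the same approach as the paper's: both argue that the affine map $p\mapsto c_1p+c_2$ preserves the constraint \eqref{equ: recon_conserv} (because $\ell_i(1)=1$) and rescales the objective \eqref{equ: recon_mini} by a nonnegative constant, hence carries minimizers to minimizers. Your version is more detailed---explicitly separating the case $c_1=0$ and flagging the uniqueness caveat---while the paper's proof is a two-sentence sketch of the same idea.
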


\begin{proof}
	Since $p_l(x,y)$ satisfies the condition \eqref{equ: recon_conserv} and minimizes \eqref{equ: recon_mini}, and noting that $\int_{I_i}\mathrm{d}x\mathrm{d}y=\mathcal{M}_{0,0}^i,$
	applying the linear transformation $c_1(\cdot)+c_2$ uniformly to both $p_l(x,y)$ and the data $\left\{\overline{U}_{i,l}:~i\in S\right\}$ leaves the condition \eqref{equ: recon_conserv} and the minimization in \eqref{equ: recon_mini} unchanged. Thus, $c_1p_l(x,y)+c_2$ is the reconstruction corresponding to $\left\{c_1\overline{U}_{i,l}+c_2:~i\in S\right\}$.
\end{proof}

The following lemma establishes the EC property of direct polynomial reconstruction.

\begin{lemma}\label{lem:directpoly_EC}
	Suppose that the cell-averaged conserved variables $\overline{\mathbf{U}}$ over the reconstruction stencil $S$ all share the same pressure $p^*$, velocity $\mathbf{v}^*$, and material (i.e., \textbf{identical} parameters $\gamma$ and $B$ in the EOS \eqref{equ:EOS}). Then, the direct polynomial reconstruction of the conserved variable vector $\mathbf{U}$ yields a vector polynomial $\mathbf{p}\in \mathcal{G}^{\gamma,B}(\mathbf{v}^*,p^*)$. Consequently, the direct polynomial reconstruction of $\mathbf{U}$ possesses EC property. Here, the vector polynomial $\mathbf{p}$ is obtained by performing the polynomial reconstruction separately on each component of $\mathbf{U}$.
\end{lemma}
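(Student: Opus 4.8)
The plan is to reduce the claim to the affine‑equivariance of the direct polynomial reconstruction recorded in Lemma~\ref{lem:recon_linear}. First I would unpack the hypotheses: because every cell in the stencil $S$ shares the same velocity $\mathbf v^\ast=(v_x^\ast,v_y^\ast)$, pressure $p^\ast$, and EOS parameters $(\gamma,B)$, the four cell‑averaged conserved components are, cell by cell, affine functions of the density component. Writing $\overline\rho_i:=\overline U_{i,1}$, Definition~\ref{def:set G} gives $\overline U_{i,2}=v_x^\ast\overline\rho_i$, $\overline U_{i,3}=v_y^\ast\overline\rho_i$, and $\overline U_{i,4}=\tfrac{p^\ast+\gamma B}{\gamma-1}+\tfrac12|\mathbf v^\ast|^2\,\overline\rho_i$ for every $i\in S$. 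Hence the data feeding the component‑wise reconstruction of components $2,3,4$ are exactly the images of the density data $\{\overline\rho_i\}_{i\in S}$ under the affine maps $t\mapsto v_x^\ast t$, $t\mapsto v_y^\ast t$, and $t\mapsto\tfrac12|\mathbf v^\ast|^2 t+\tfrac{p^\ast+\gamma B}{\gamma-1}$.

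Next I would apply Lemma~\ref{lem:recon_linear}. Let $p_1$ be the polynomial reconstructed from $\{\overline\rho_i\}_{i\in S}$. Applying the lemma with $(c_1,c_2)=(v_x^\ast,0)$, $(v_y^\ast,0)$, and $(\tfrac12|\mathbf v^\ast|^2,\tfrac{p^\ast+\gamma B}{\gamma-1})$, and using that the direct reconstruction is a single‑valued operator on the data, identifies the reconstructions of the remaining components as $p_2=v_x^\ast p_1$, $p_3=v_y^\ast p_1$, and $p_4=\tfrac12|\mathbf v^\ast|^2 p_1+\tfrac{p^\ast+\gamma B}{\gamma-1}$. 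Assembling, the reconstructed vector polynomial is
\[
\mathbf p(x,y)=\Bigl(p_1,\;v_x^\ast p_1,\;v_y^\ast p_1,\;\tfrac{p^\ast+\gamma B}{\gamma-1}+\tfrac12|\mathbf v^\ast|^2 p_1\Bigr)^{\!\top}(x,y),
\]
which is precisely the parametric form of a point of $\mathcal G(\mathbf v^\ast,p^\ast;\gamma,B)$ from Definition~\ref{def:set G}, with ``density'' $p_1(x,y)$.

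Finally I would read off the conclusion. From the displayed form, the velocity recovered from $\mathbf p(x,y)$ is $\mathbf v^\ast$ (wherever $p_1(x,y)\neq0$), and the pressure recovered through the stiffened EOS \eqref{equ:EOS} is $(\gamma-1)\bigl[\bigl(\tfrac{p^\ast+\gamma B}{\gamma-1}+\tfrac12|\mathbf v^\ast|^2 p_1\bigr)-\tfrac12 p_1|\mathbf v^\ast|^2\bigr]-\gamma B=p^\ast$, identically; in particular $\mathbf p(x,y)\in\mathcal G(\mathbf v^\ast,p^\ast;\gamma,B)$ at every point where the reconstructed density $p_1$ is positive, which is exactly the EC property of Definition~\ref{def:EC property}. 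Since the reconstruction of $\mathbf U$ is defined component‑wise, nothing further is needed. There is no real obstacle here: the one point to state carefully is that Lemma~\ref{lem:recon_linear} pins down $p_2,p_3,p_4$ only because the direct reconstruction is a well‑defined map on cell‑average data, and that membership in $\mathcal G$ at a point tacitly assumes positivity of the reconstructed density there (which is in any case required wherever the reconstructed point values are used --- cf.\ the redistribution in Lemma~\ref{lem:pe_2} and the flux evaluation of Section~\ref{subsec:semidisc-num}).
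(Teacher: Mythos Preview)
Your proposal is correct and follows essentially the same approach as the paper: both arguments observe that membership in $\mathcal G(\mathbf v^\ast,p^\ast;\gamma,B)$ makes each conserved component an affine image of the density, and then invoke the affine equivariance of direct polynomial reconstruction (Lemma~\ref{lem:recon_linear}) to conclude that the reconstructed vector polynomial inherits the same structure. The paper's version is merely more terse, stating the linearity of the map $\rho\mapsto(\rho,\rho\mathbf v^\ast,\tfrac{p^\ast+\gamma B}{\gamma-1}+\tfrac12\rho|\mathbf v^\ast|^2)^\top$ and citing Lemma~\ref{lem:recon_linear} without writing out the three componentwise applications as you do.
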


\begin{proof}
	By Definition \ref{def:set G}, all cell-averaged values $\overline{\mathbf{U}}$ in the stencil $S$ lie in $\mathcal{G}^{\gamma,B}(\mathbf{v}^*,p^*)=\left\{\mathbf{U}=\left(\rho,\rho \mathbf{v}^*,\frac{p^*+\gamma B}{\gamma-1}+\frac12\rho|\mathbf{v}^*|\right)^\top\middle|\rho>0\right\}$. Since the mapping 
	\vspace{-2.5mm}$$\rho\mapsto\left(\rho,\rho \mathbf{v}^*,\frac{p^*+\gamma B}{\gamma-1}+\frac12\rho|\mathbf{v}^*|^2\right)^\top$$
	is linear, it follows from the linearity of this mapping and Lemma \ref{lem:recon_linear} that the reconstructed polynomial $\mathbf{p}\in\mathcal{G}^{\gamma,B}(\mathbf{v}^*,p^*)$. This completes the proof.
\end{proof}

\subsection{A simple EC modification for WENO reconstructions}\label{sec:EC_WENO_modi}
WENO reconstructions constitute one of the most widely used families of
high-order reconstructions for hyperbolic conservation laws, valued for their shock-capturing capability, high resolution, and high-order accuracy \cite{shu2020essentially}.
Over the past three decades, numerous WENO variants have been proposed, each optimized for particular grids, smoothness indicators, or nonlinear‐weight formulations. Unfortunately, although characteristic-wise WENO reconstructions do possess the EC property \cite{johnsen2011treatment}, many WENO reconstructions do not. In what follows we briefly review the WENO reconstruction procedure and then present a minimal modification that equips any WENO reconstruction---including component-wise versions---with the EC property while preserving its original accuracy and non-oscillatory behavior.

A high-order WENO reconstruction for a scalar quantity \(u(x,y)\) at the point \((x_0,y_0)\) in the target cell \(I_{i_0}\) proceeds as follows.
\begin{enumerate}
	\item \textbf{Candidate polynomials.}
	Choose \(r\) stencils
	\(S_1,\dots,S_r\subset\mathcal{T}(t^n)\) (all containing \(I_{i_0}\)),
	and on each \(S_k~(k\geq1)\) construct a direct reconstruction
	polynomial \(q_k(x,y)\) (see Section \ref{sec:direct_poly_EC}).
	
	\item \textbf{Linear weights.}
	Select (possibly negative) constants \(\{\lambda_k\}_{k=1}^r\) with \(\sum_k\lambda_k=1\) so that
	\[
	q_{\mathrm{lin}}(x_0,y_0)
	=\sum_{k=1}^{r}\lambda_k\,q_k(x_0,y_0)
	\]
	exactly reproduces the desired high-order polynomial approximation at the target point \((x_0,y_0)\).
	
	\item \textbf{Nonlinear weights.}
	Compute a smoothness indicator \(\beta_k\) for each reconstruction \(q_k\), and then define nonlinear weights
	\(
	\omega_k(\beta_1,\cdots,\beta_r;\lambda_1,\cdots,\lambda_r)
	\)
	subject to the normalization
	\(\sum_{k}\omega_k=1\).
	These nonlinear weights are designed so that \(\omega_k\) rapidly approaches zero if the stencil \(S_k\) crosses a discontinuity, whereas in smooth regions the difference $\bigl|\omega_k - \lambda_k\bigr|$ vanishes at the designed order of accuracy.
	
	\item \textbf{Final value.}
	The WENO reconstruction at \((x_0,y_0)\) is
	\[
	q^{\mathrm{WENO}}(x_0,y_0)
	=\sum_{k=1}^{r}\omega_k\,q_k(x_0,y_0).
	\]
\end{enumerate}This reconstruction suppresses spurious oscillations near discontinuities while maintaining high‐order accuracy in smooth regions.  For the conserved-variable vector \(\mathbf U\), WENO can be
performed in two principal ways---\emph{characteristic‐wise} or
\emph{component‐wise} \cite{chi1997essentially}.  The former is
inherently EC \cite{johnsen2011treatment,
	qin4621573modified}, whereas in the component-wise variant the nonlinear
weights \(\omega_{k}^{(j)}\) generally differ across components;
consequently, the EC property of direct polynomial reconstruction is
lost for some component‐wise WENO reconstructions.

\medskip
\noindent\textbf{EC modification for WENO schemes via unified nonlinear weights.}
To equip a component-wise WENO reconstruction with the EC property, we adopt a unified set of nonlinear weights
for \emph{all} components of \(\mathbf{U}\):
\begin{enumerate}
	\item Compute \(\omega_k^{(j)}\) for every component \(j\).
	\item Identify
	\(\displaystyle
	j^\star=\min\left(\arg\max_{j}\max_{1\le k\le r}
	\bigl|\omega_k^{(j)}-\lambda_k\bigr|\right)
	\)
	(the index of component whose nonlinear weights deviate most from the linear
	weights).
	\item Set the unified nonlinear weights
	\(\omega_k := \omega_k^{(j^\star)}\) for
	\(k=1,\dots,r\),
	and apply them in the WENO reconstruction for every component of \(\mathbf{U}\).
\end{enumerate}
With these unified weights, the reconstruction vector can be written as
\begin{equation}\label{equ:WENO poly}
	\mathbf{q}^{\mathrm{WENO}}(x_0,y_0)
	=\sum_{k=1}^{r}\omega_k\,\mathbf{q}_k(x_0,y_0),
\end{equation}
where \(\mathbf{q}_k(x,y)\) is the direct polynomial reconstruction of
\(\mathbf{U}\) on stencil \(S_k\). Assume that every cell in the combined stencil $\bigcup_{k=1}^{r} S_k$ belongs to the \emph{same} material---hence shares the identical parameters $\gamma$ and $B$---and that all those cells have the same velocity $\mathbf{v}^*$ and pressure $p^*$. By Lemmas \ref{lem:convex} and \ref{lem:directpoly_EC} it then follows that \(\mathbf{q}^{\mathrm{WENO}}(x_0,y_0)\in\mathcal{G}^{\gamma,B}(\mathbf{v}^*,p^*)\). Consequently, the unified-weight modification equips general WENO reconstructions with the EC property.

\begin{remark}
	If the nonlinear weights are exactly affine-invariant \cite{WANG2022630}, the reconstruction is also EC. However, many widely used WENO reconstructions---including the classical WENO-JS reconstructions \cite{JIANG1996202}---lack this property. 
\end{remark}

\begin{remark}
	For a fixed point $(x_0,y_0)$, characteristic decomposition is a linear operation \cite{chi1997essentially}. Moreover, if $q=2$ in \eqref{equ: recon_mini}, the polynomial coefficients are obtained via a least‐squares fit---another linear operation. Consequently, when $q=2$, characteristic decomposition does not disturb the EC property of the direct polynomial reconstruction proved in Lemma \ref{lem:directpoly_EC}, and therefore does not compromise the EC property of the EC-modified WENO reconstruction introduced in this subsection.
\end{remark}

\subsection{EC-MRWENO reconstruction on asymmetric cut-cell meshes near material interfaces}\label{sec:EC-MRWENO}
As shown in the previous subsection, a WENO reconstruction attains EC as long as all the reconstruction stencils contain data from a \emph{single} material, and the same unified set of nonlinear weights is applied to every conserved component.
Consequently, for the cells near the material interface $\Gamma_h$, the reconstruction stencil must be restricted to cells lying on the \emph{same} side of $\Gamma_h$.

As an example, in Figure \ref{fig:face_rec} the merged cell $\tilde{I}_1$ should build its WENO stencils using \emph{only} the yellow cells; blue cells across $\Gamma_h$ are excluded. Because the merged cells skirting $\Gamma_h$ are unstructured, we employ the third-order MRWENO reconstruction \cite{ZHU201919}---widely used on unstructured grids---for all cells: both the unstructured merged cells adjacent to the interface and the Cartesian cells farther away. Implementation details of MRWENO reconstruction \cite{ZHU201919} are not repeated here, as our focus is on pressure-equilibrium preservation rather than the particulars of a WENO variant. The unified-weight modification \eqref{equ:WENO poly} is applied, yielding the EC-MRWENO reconstruction. As detailed
in Subsection~\ref{subsec:semidisc-num}, any EC reconstruction
could be substituted; here we choose EC‐MRWENO for its superior accuracy in smooth regions.

\begin{remark}
	In the spatial discretization \eqref{equ:semidiscre-totalU}, we employ a
	characteristic‐wise MRWENO reconstruction to reduce spurious oscillations. In the redistribution step
	\eqref{equ:red_p_con}--\eqref{equ:redistribute_1}, we apply a
	component‐wise MRWENO reconstruction---as suggested in \cite{zhang2025remapping}. Consequently, to ensure that
	the redistribution operation preserves pressure equilibrium, the EC modification is essential. Numerical tests
	in Test~\ref{sec:test:smooth_single} of Section~\ref{sec:numerical_tests} confirm the necessity of the EC modification.
\end{remark}

\section{Numerical Experiments}\label{sec:numerical_tests}
This section presents several numerical experiments designed to validate the
high‐order accuracy, pressure‐equilibrium preservation, and GPR property of the GPR cut‐cell scheme. For brevity, this method is hereafter referred to as ``GPR‐CC''. To demonstrate the critical role of evolved geometric moments $\mathcal M_{s,r}^\text{evo}$ in the GPR‐CC scheme, we compare it with two variants built on the same framework:
\begin{itemize}
	\item \textbf{GCL‐CC}, which evolves only the cell volume $\mathcal M_{0,0}^\text{evo}$ via
	\eqref{equ:semidisc_M}---thereby satisfying the GCL---while reconstructing all higher‐order moments
	\(\mathcal{M}_{s,r}^{\mathrm{rec}}\) for \(s+r>0\).
	\item \textbf{Con‐CC}, which uses only the reconstructed moments
	\(\mathcal{M}_{s,r}^{\mathrm{rec}}\), resulting in a purely
	conservative scheme.
\end{itemize}
To demonstrate the superior interface resolution of the GPR-CC scheme over diffusive‐interface methods, we also test the RKDG scheme from \cite{luo2021quasi} (denoted DIM‐DG) on selected problems.  For a fair comparison, the DIM‐DG employs the same LLF flux \eqref{equ:LLF flux} as used in the GPR‐CC scheme. Owing to the GPR property, level set reinitialization need only be performed periodically; by default we perform the reinitialization \cite{zheng2021high} in the GPR-CC scheme every 30 time steps. In all cut-cell methods, the CFL number is set to 0.6.

\subsection{Pure interface problem with a discontinuity of density}\label{sec:test:smooth_single}
To evaluate accuracy, pressure-equilibrium preservation and conservation, we first consider a two-material problem with initial
data
\[
(\rho,u,v,p,\gamma,B)
=
\begin{cases}
	\bigl(2+0.2\sin[\pi(x+y)],1,1,1,1.4,0\bigr),  
	& (x-0.7)^2 + (y-0.7)^2<0.3^2,\\
	\bigl(1+0.2\sin[\pi(x+y)],1,1,1,4,1\bigr), 	& (x-0.7)^2 + (y-0.7)^2>0.3^2.
\end{cases}
\]
Here the density jumps by \(1\) across the circle of radius \(0.3\). The computational domain is \([0,2]\times[0,2]\) with periodic boundary conditions, and the solution is advanced to \(T=0.3\). To avoid accuracy degradation caused by the singularity of the level‐set function \(\phi\) at the center of the circle, we initialize \(\phi\) as $\phi_0(x,y) = \frac{5}{3}\bigl(0.09 - (x-0.7)^2 - (y-0.7)^2\bigr),$
and impose exact boundary conditions for $\phi$. This choice produces a smooth, non‐singular initialization without requiring any reinitialization steps.
\subsubsection{Accuracy}
Since the numerically reconstructed interface $\Gamma_h$ does not coincide exactly
with the exact one, we measure errors in the \emph{analytic-continuation} sense:
for each material, we extend the exact solution to the entire domain and
compute the discrepancy with the numerical solution. Table \ref{tab: t1-1} summarizes the \(L^1\) and \(L^\infty\) density errors
for the Con‐CC, GCL‐CC, GPR‐CC, and GPR‐CC (non‐EC) schemes, where ``non‑EC'' indicates that the MRWENO reconstruction does not employ the EC modification proposed in Section~\ref{sec:EC_WENO_modi}. The results demonstrate that, even \emph{in the presence of a discontinuity}, the GPR-CC and GPR-CC (non-EC) schemes both achieve second‐order convergence in the vicinity of the material interface and third‐order accuracy in regions away from the interface. In contrast, both the Con-CC and GCL-CC schemes suffer severe order degradation. The second-order \(L^\infty\) accuracy of the GPR-CC scheme near the interface is limited by the straight-line interface reconstruction (Eq. \eqref{equ:lsm recon}), a restriction that has been widely reported \cite{cheng2008third}.

\begin{table}[!t]
	\centering
	\captionsetup{font=small}
	\caption{Density errors (in the analytic-continuation sense) for the various schemes in the pure interface problem (Test~\ref{sec:test:smooth_single}).}
	
	\begingroup
	\setlength{\tabcolsep}{2.6666pt} 
	\renewcommand{\arraystretch}{1.2} 
	\centering
	\footnotesize
	\label{tab: t1-1}
	\begin{tabular}{ccccccccc} 
		\bottomrule[1.0pt]
		\multicolumn{2}{c}{$N_{x}\times N_{y}$} & $80\times 80$ & $120\times 120$& $160\times 160$& $200\times 200$&$240\times 240$ & $280\times 280$ & $320\times 320$\\
		\hline
		\multirow{4}{*}{Con-CC}&$L^{1}$ error&6.30E-04&3.51E-04&2.47E-04&2.02E-04&1.67E-04&1.49E-04&1.29E-04\\
		&Order&---&1.45&1.22&0.91&1.03&0.76&1.05\\
		&$L^{\infty}$ error&1.21E-01&1.37E-01&1.07E-01&1.18E-01&1.04E-01&1.10E-01&9.81E-02\\
		&Order&---&-0.30&0.86&-0.44&0.72&-0.40&0.87\\
		\hline
		\multirow{4}{*}{GCL-CC}&$L^{1}$ error&1.97E-04&6.96E-05&3.70E-05&2.45E-05&1.74E-05&1.35E-05&1.08E-05\\
		&Order&---&2.56&2.20&1.84&1.89&1.62&1.71\\
		&$L^{\infty}$ error&3.35E-03&3.11E-03&2.39E-03&2.26E-03&2.11E-03&2.22E-03&2.21E-03\\
		&Order&---&0.19&0.91&0.26&0.36&-0.31&0.03\\
		\hline
		\multirow{4}{*}{GPR-CC}&$L^{1}$ error&1.83E-04&5.45E-05&2.33E-05&1.20E-05&7.03E-06&4.48E-06&3.01E-06\\
		&Order&---&2.99&2.95&2.96&2.95&2.93&2. 98\\
		&$L^{\infty}$ error&1.53E-03&7.24E-04&4.15E-04&2.68E-04&1.86E-04&1.48E-04&1.07E-04\\
		&Order&---&1.85&1.93&1.97&2.00&1.49&2.41\\
		\hline
		\multirow{4}{*}{\shortstack{GPR‑CC\\(non‑EC)}}&$L^{1}$ error&1.85E-04&5.50E-05&2.35E-05&1.22E-05&7.10E-06&4.53E-06&3.04E-06\\
		&Order&---&3.00&2.95&2.96&2.95&2.93&2.97\\
		&$L^{\infty}$ error&1.64E-03&7.69E-04&4.42E-04&2.85E-04&1.98E-04&1.56E-04&1.13E-04\\
		&Order&---&1.87&1.93&1.96&2.00&1.54&2.44\\
		\toprule[1.0pt]
	\end{tabular}
	\endgroup
\end{table}

Figure~\ref{fig:t1-1} compares cross‑sectional density profiles: the Con‑CC scheme exhibits noticeable errors at the interface, whereas the GCL‑CC, GPR‑CC, and GPR‑CC (non‑EC) schemes show much smaller deviations near the material interface. Figure~\ref{fig:t1-2} illustrates the interface location captured by the GPR‑CC scheme on a coarse $40\times40$ grid; even at this low resolution, the GPR‑CC scheme accurately locates the interface.

The results in Table~\ref{tab: t1-1} and Figure~\ref{fig:t1} underscore the essential role of high‑order evolved geometric moments $\mathcal{M}_{s,r}^{\mathrm{evo}}$ in achieving genuinely high‑order accuracy in cut‑cell methods.

\begin{figure}[!htb]
	\centering
	\subfloat[Zoomed-in cross-section of $\rho$]{\label{fig:t1-1}\includegraphics[height=0.2\textheight]{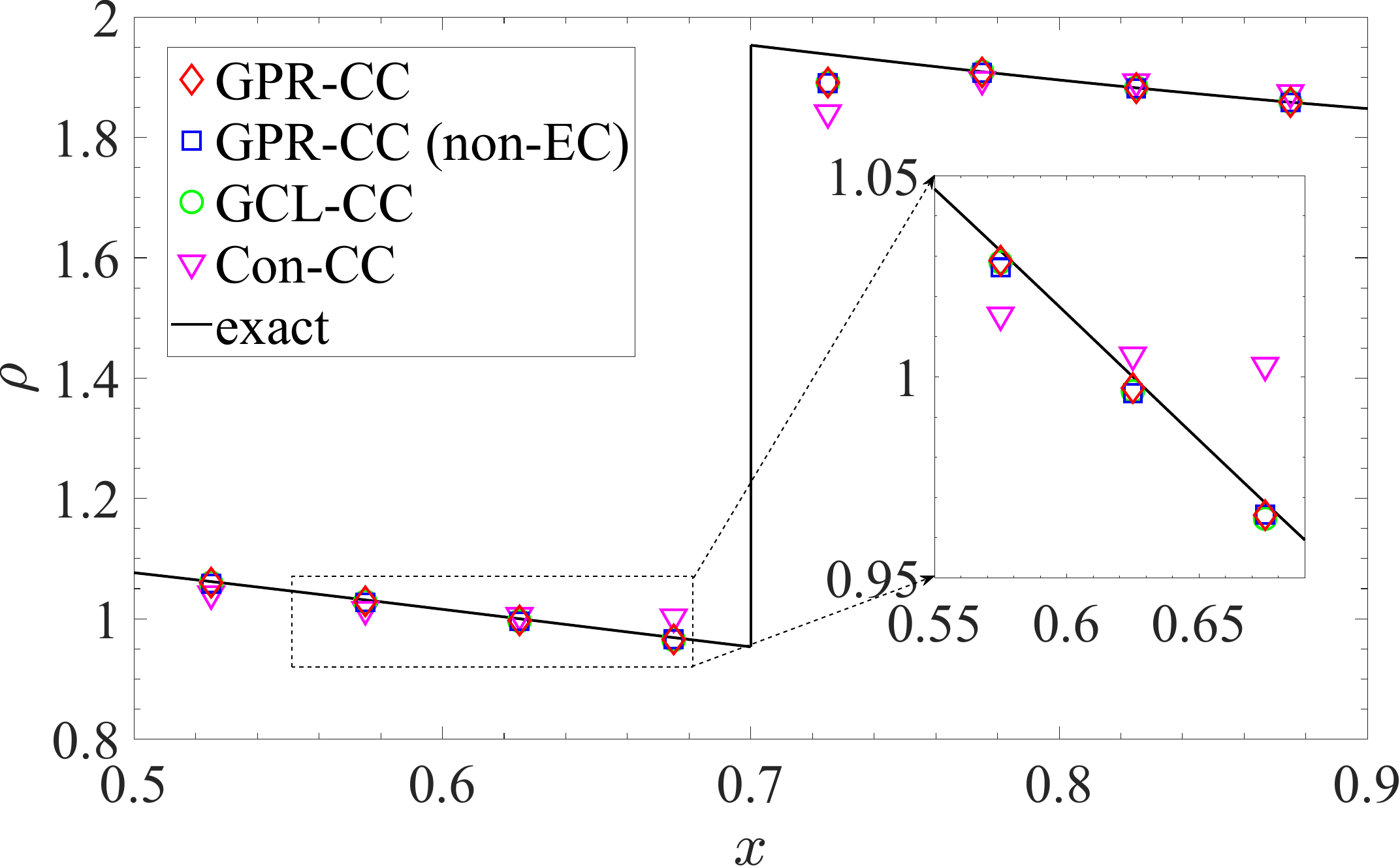}}
	\subfloat[Interface obtained by GPR-CC]{\label{fig:t1-2}\includegraphics[height=0.2\textheight]{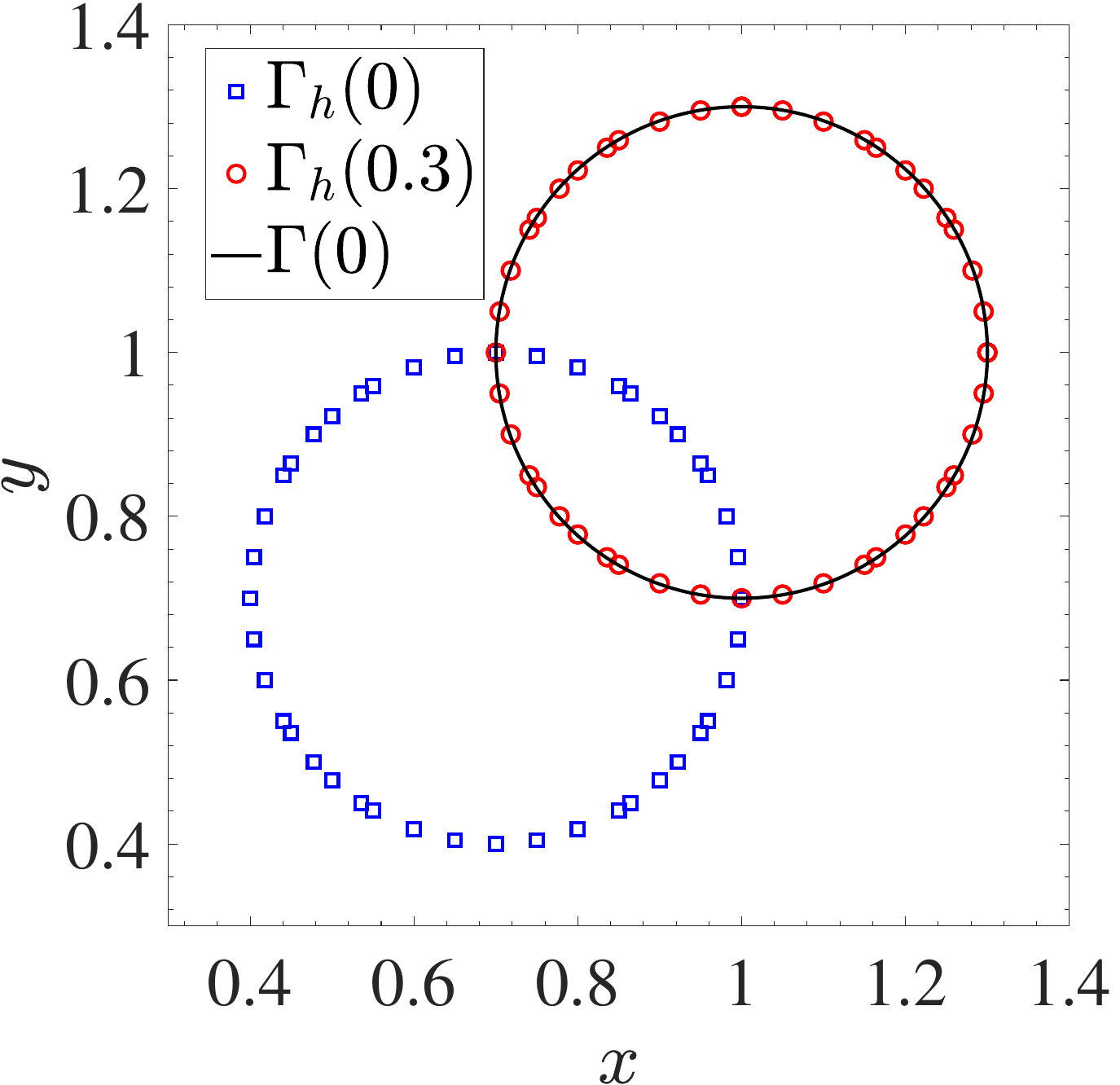}}
	\captionsetup{font=small}
	\caption{Results for Test~\ref{sec:test:smooth_single}, the pure interface problem. Left: zoomed-in cross-section of the density \(\rho\) at \(y=0.5\),
		computed by each scheme on a \(40\times40\) grid. Right: numerical
		interface \(\Gamma_h(t)\) obtained with the GPR-CC scheme versus the exact interface \(\Gamma(t)\).}
	\label{fig:t1}
\end{figure}
\subsubsection{Pressure-equilibrium-preserving}
Figure \ref{fig:t2} reports the pressure errors: pronounced oscillations occur
at the interface for the Con‐CC and GPR‐CC (non‐EC) schemes, whereas the GCL‐CC and GPR‐CC schemes exhibit pressure errors at the level of machine precision. Therefore, both the use of evolved volume \(\mathcal{M}_{0,0}^{\mathrm{evo}}\) and the EC reconstruction are indispensable for preserving pressure equilibrium near the interface.

\begin{figure}[!htb]
	\centering
	\subfloat[Con-CC]{\label{sfig:t2-1}\includegraphics[height=0.12\textheight]{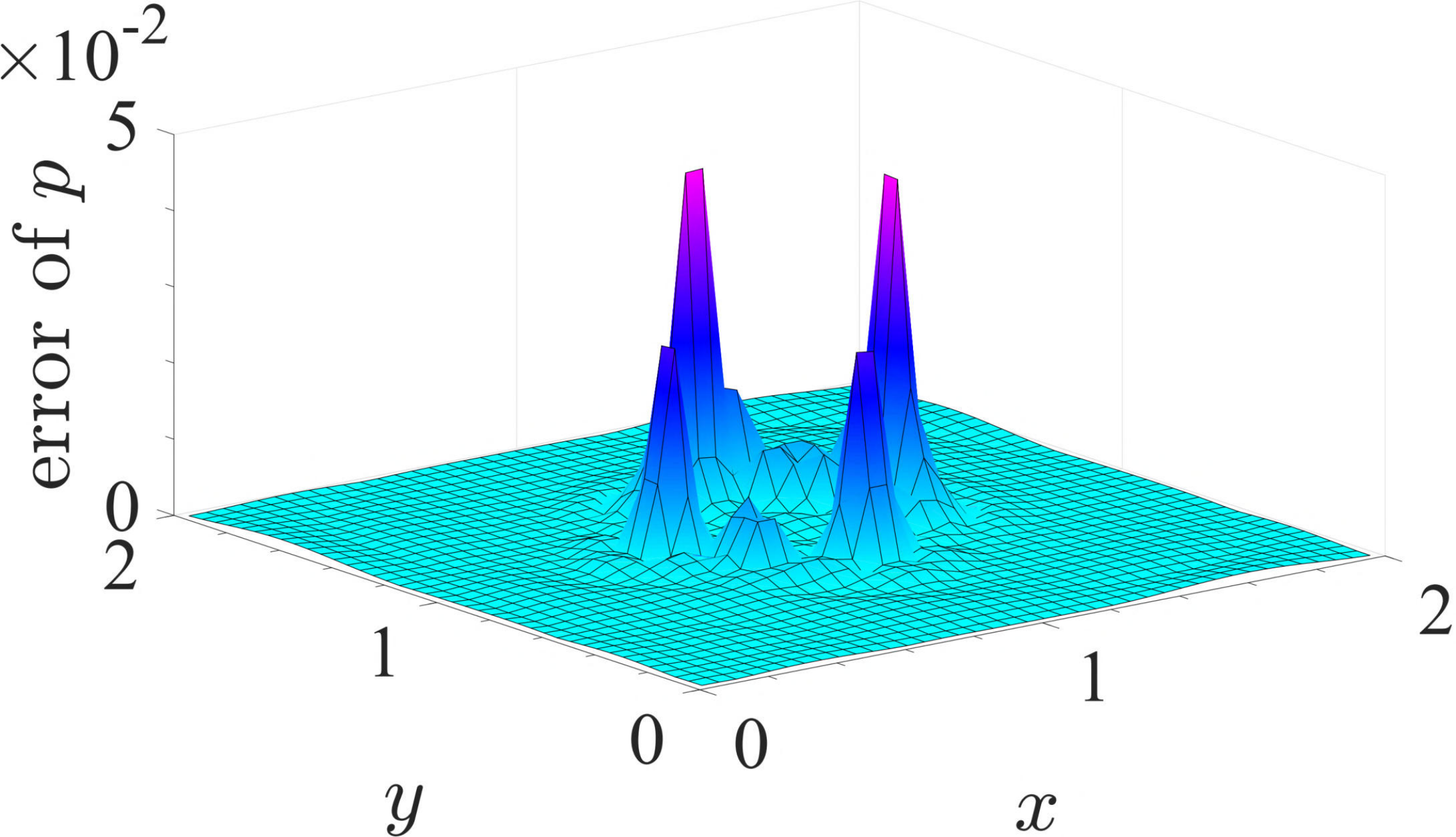}}
	\hspace{0.05\textwidth}
	\subfloat[GCL-CC]{\label{sfig:t2-2}\includegraphics[height=0.12\textheight]{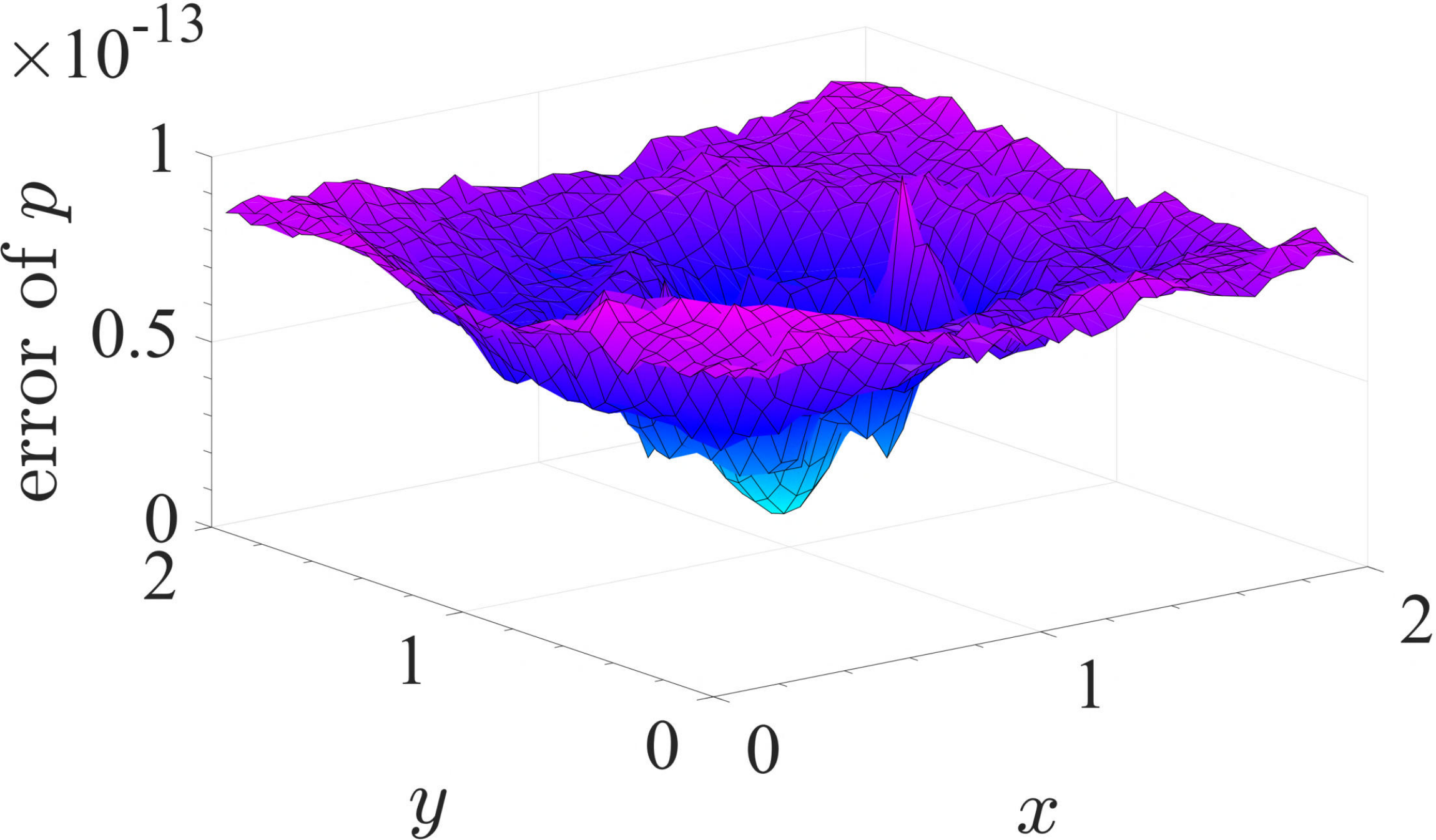}}
	\hfill
	\subfloat[GPR-CC]{\label{sfig:t2-3}\includegraphics[height=0.12\textheight]{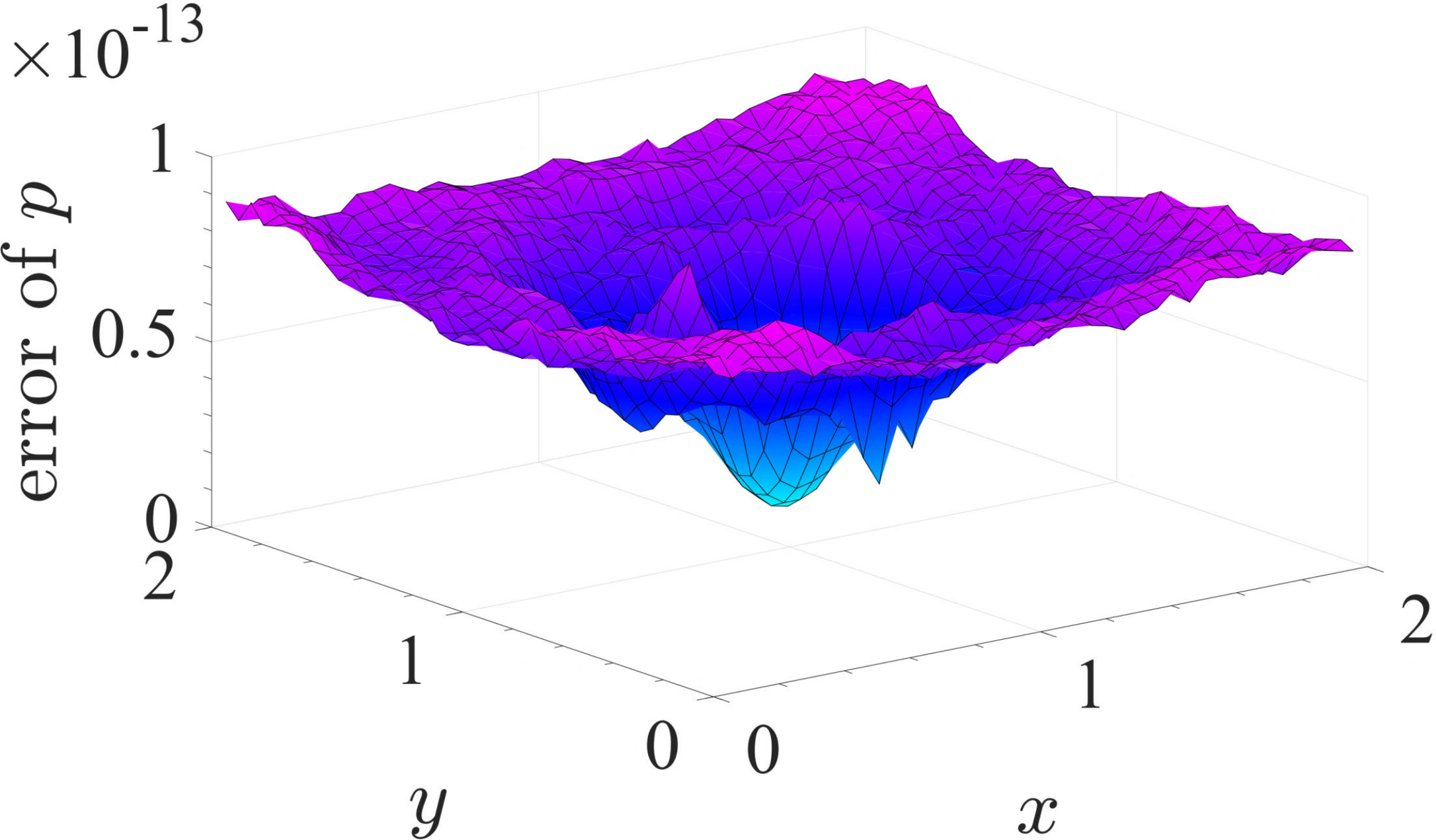}}
	\hspace{0.05\textwidth}
	\subfloat[GPR-CC (non-EC)]{\label{fig:t1-6}\includegraphics[height=0.13\textheight]{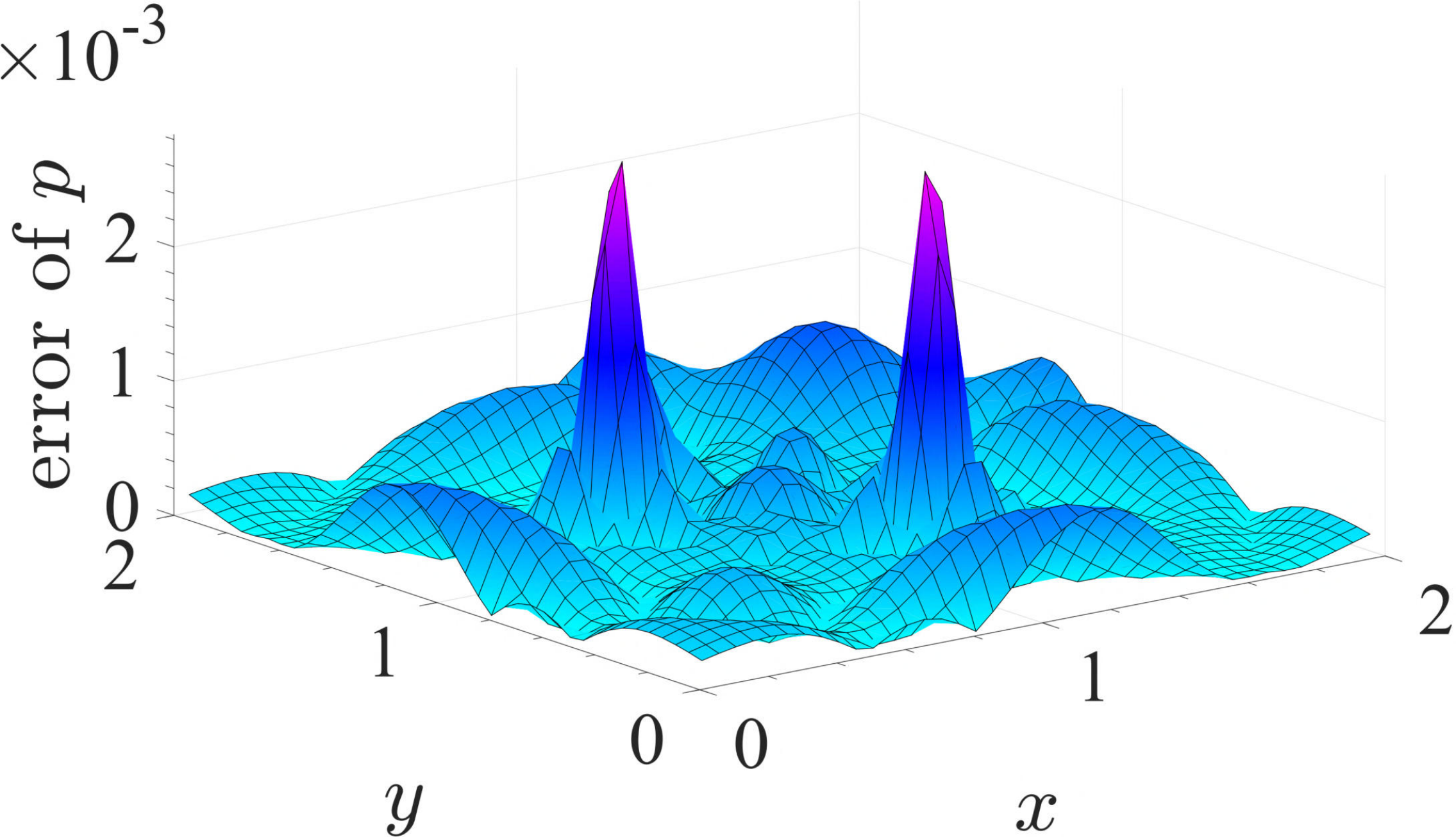}}
	\captionsetup{font=small}
	\caption{Pressure errors for each scheme in the pure interface problem (Test~\ref{sec:test:smooth_single}). \(40\times40\) grid.}
	\label{fig:t2}
\end{figure}
\subsubsection{Conservation error}
Finally, we assess the per-material mass conservation error of the GPR‑CC scheme in this test. Table \ref{tab: t1-2} shows that, on average, the per‑material mass conservation error decreases at fourth‑order as the mesh is refined, reaching negligible levels.

\begin{table}[!t]
	\vspace{5mm}
	\centering
	\captionsetup{font=small}
	\caption{Per-material mass conservation errors of the GPR‐CC scheme for the pure interface problem (Test~\ref{sec:test:smooth_single}).}

	\begingroup
	\setlength{\tabcolsep}{2.6666pt} 
	\renewcommand{\arraystretch}{1.2} 
	\centering
	\footnotesize
	\label{tab: t1-2}
	\begin{tabular}{ccccccccc} 
		\bottomrule[1.0pt]
		\multicolumn{2}{c}{$N_{x}\times N_{y}$} & $80\times 80$ & $120\times 120$& $160\times 160$& $200\times 200$ & $240\times 240$&$280\times 280$&$320\times 320$\\
		\hline
		\multirow{2}{*}{Material 1 ($\phi>0$)}& Error&3.04E-08&6.20E-09&4.31E-09&9.26E-10&3.68E-10&4.23E-10&1.44E-10\\
		&Order&---&3.92&1.27&6.89&4.20&-0.91&8.04\\
		\hline
		\multirow{2}{*}{Material 2 ($\phi<0$)}& Error&-1.51E-07&-2.55E-08&-1.31E-08&-3.00E-09&-1.84E-09&-4.10E-10&-5.48E-10\\
		&Order&---&4.38&2.31&6.62&2.68&9.75&-2.17\\
		\toprule[1.0pt]
	\end{tabular}
	\endgroup
\end{table}

\subsection{Pure interface problem with a strong density discontinuity}
\label{MGBIW}
We simulate a bubble of light gas moving in water, adapted from
Example 6.1 of \cite{lin2017simulation} on the stationary
bubble problem. The initial conditions are
\[
(\rho,u,v,p,\gamma,B)
=
\begin{cases}
	(1.2,\,1,\,1,\,1,\,1.4,\,0), \quad & \phi_0(x,y)=\sqrt{x^2+y^2}-1<0,\\
	(1000,\,1,\,1,\,1,\,4.4,\,6000),\quad & \phi_0(x,y)=\sqrt{x^2+y^2}-1>0.
\end{cases}
\]
The computational domain is \([-2,2]\times[-2,2]\) with periodic boundary
conditions, and the solution is advanced to \(T=0.1\).

\begin{figure}[!htb]
	\centering
	\subfloat{\label{sfig:t2-1-1}\includegraphics[height=0.12\textheight]{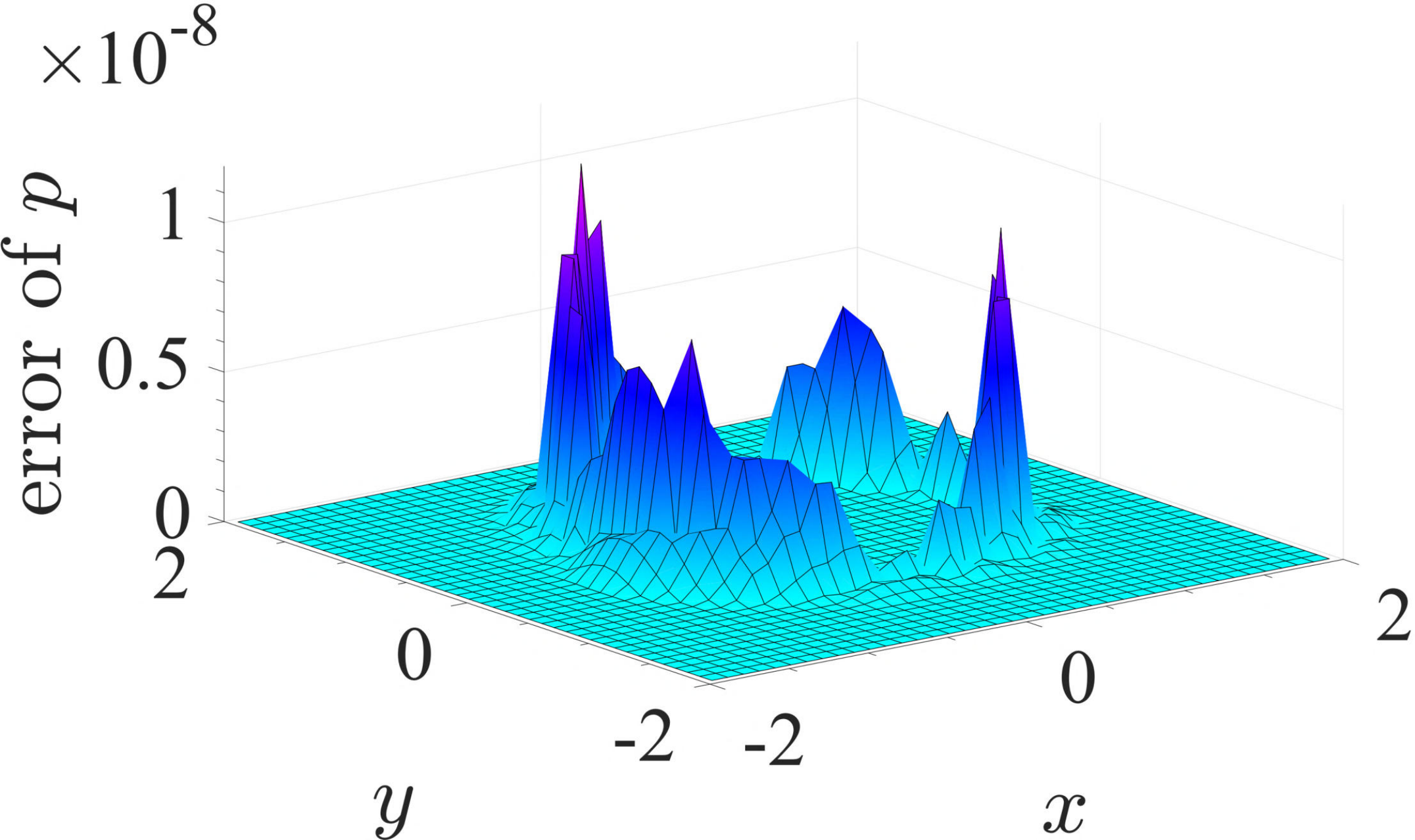}}
	\subfloat{\label{sfig:t2-1-2}\includegraphics[height=0.12\textheight]{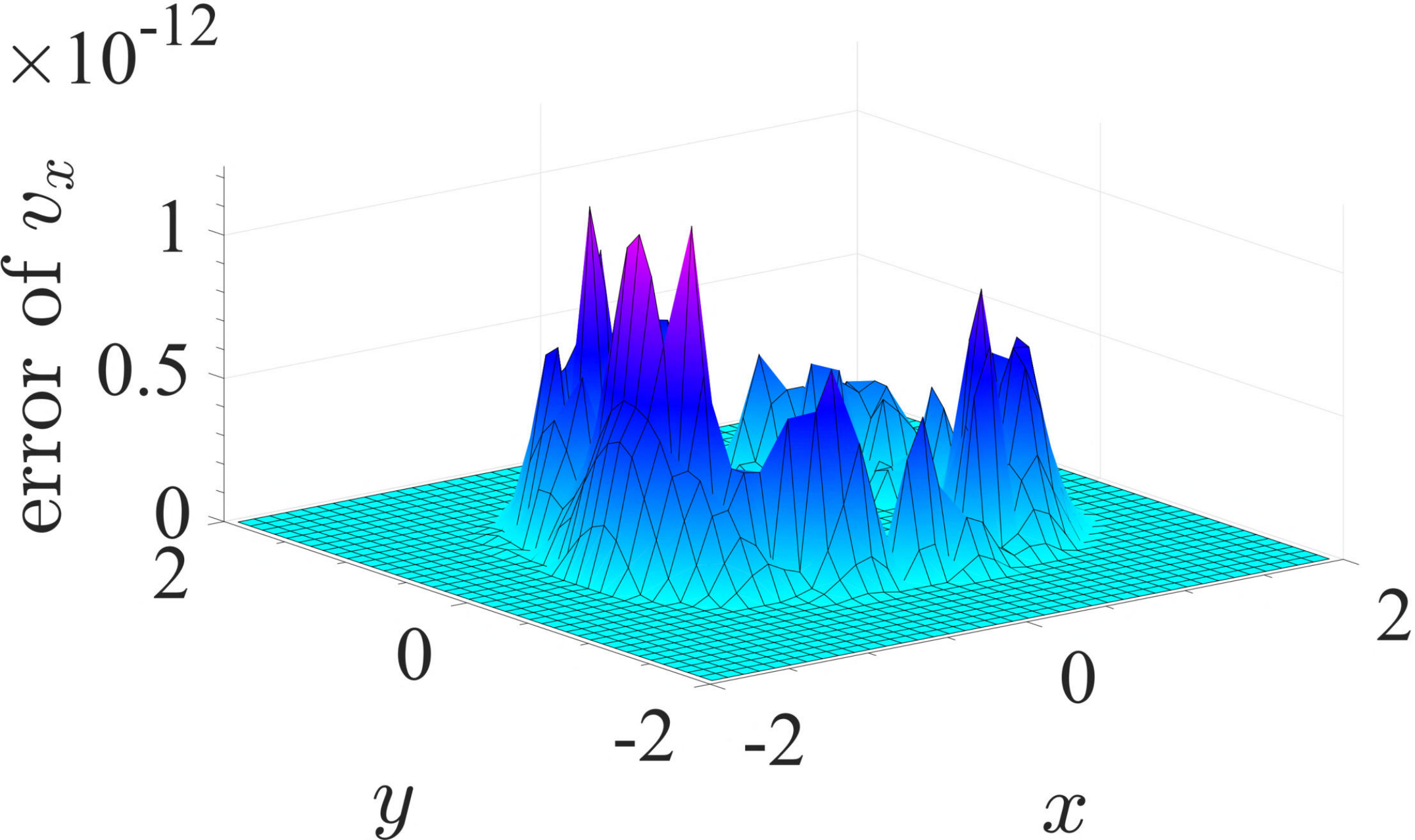}}
	\subfloat{\label{sfig:t2-1-3}\includegraphics[height=0.12\textheight]{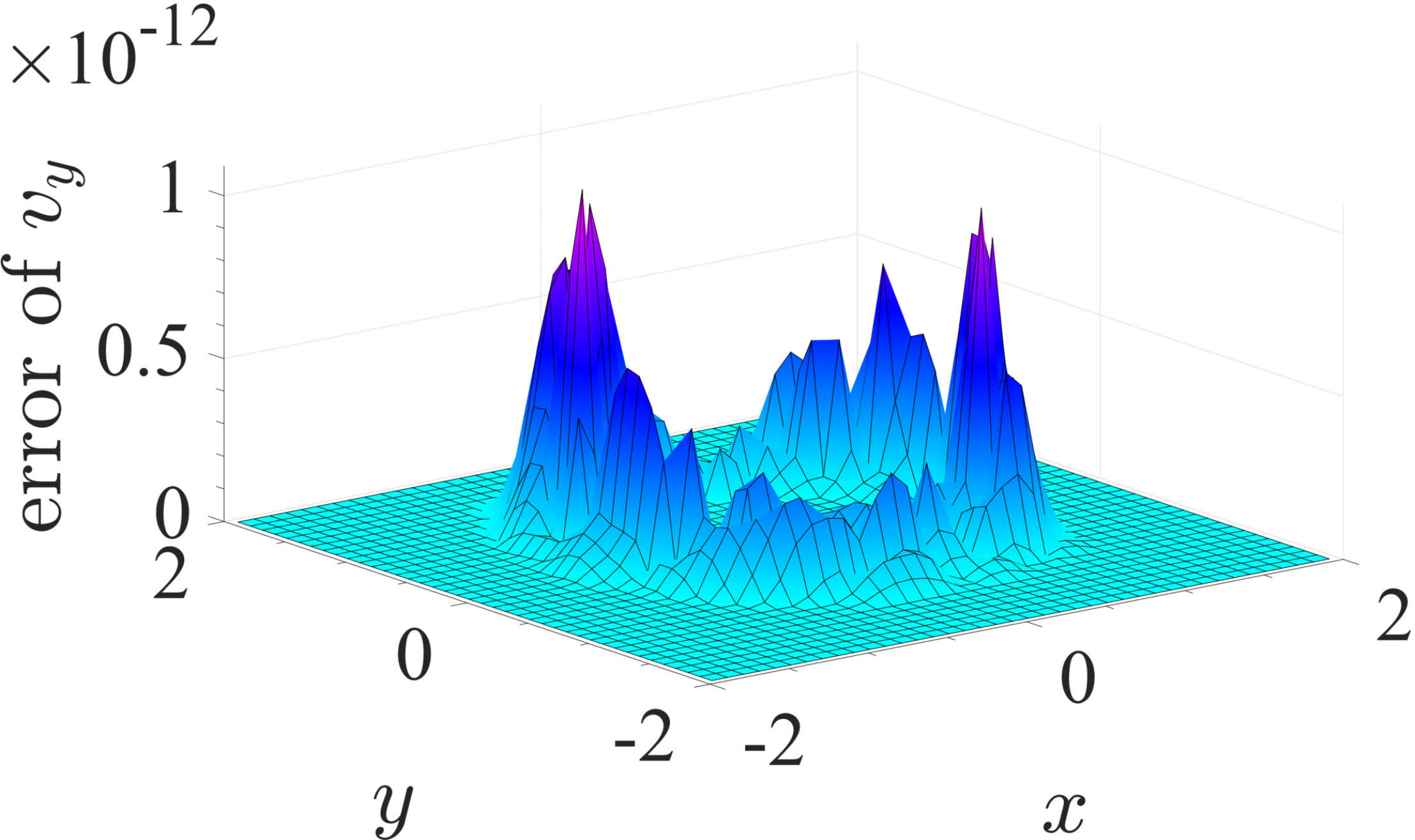}}
	\captionsetup{font=small}
	\caption{Pure‐interface test with a strong density discontinuity (Problem~\ref{MGBIW}). Errors obtained with the GPR-CC scheme. \(N_x\times N_y=40\times40\).}
	\label{fig: t2-1}
\end{figure}

Due to the strong density discontinuity, preserving pressure equilibrium in this cut‑cell test is particularly challenging. Figure~\ref{fig: t2-1} plots the errors of the GPR‐CC scheme. The GPR‐CC scheme preserves pressure equilibrium accurately, in agreement with Theorem~\ref{thm:GPR is GPR}.

\subsection{Geometric‐perturbation robustness tests: Riemann problems}

To further evaluate the GPR property of the GPR-CC scheme, we
consider two quasi‐1D Riemann problems.

\subsubsection{Weak dependence on reinitialization frequency}
\label{sec:AHST}

We first examine the air-helium shock‐tube problem \cite{qiu2007runge} with initial data
\[
(\rho,u,v,p,\gamma,B)=
\begin{cases}
	(1,\,0,\,0,\,1\times10^5,\,1.4,\,0), \quad & x<0.5,\\
	(0.125,\,0,\,0,\,1\times10^4,\,1.2,\,0), \quad & x>0.5.
\end{cases}
\]
The domain \([0,1.2]\times[-0.015,0.015]\) is discretized with \(200\times5\) cells,
and the final time is \(T=7\times10^{-4}\). Outflow boundary conditions are applied.
To assess the impact of reinitialization frequency, we compare the GPR-CC scheme against the Con-CC scheme when the level‐set is reinitialized every time
step (RN=1) versus every 200 steps (RN=200). Figure \ref{fig: t4-1} shows cross‐sectional
profiles at \(y=0\) obtained with the GPR-CC and Con-CC schemes. Because reinitialization serves to reduce interface‐reconstruction error, the GPR-CC scheme---owing to its GPR property—remains stable and accurately captures the interface structure even when reinitialization is infrequent. In contrast, Con-CC is highly sensitive to geometric errors, and its stability deteriorates rapidly under delayed reinitialization.

\begin{figure}[!htb]
	\centering
	\subfloat[Density $\rho$]{\label{sfig: t4-1}\includegraphics[height=0.162\textheight]{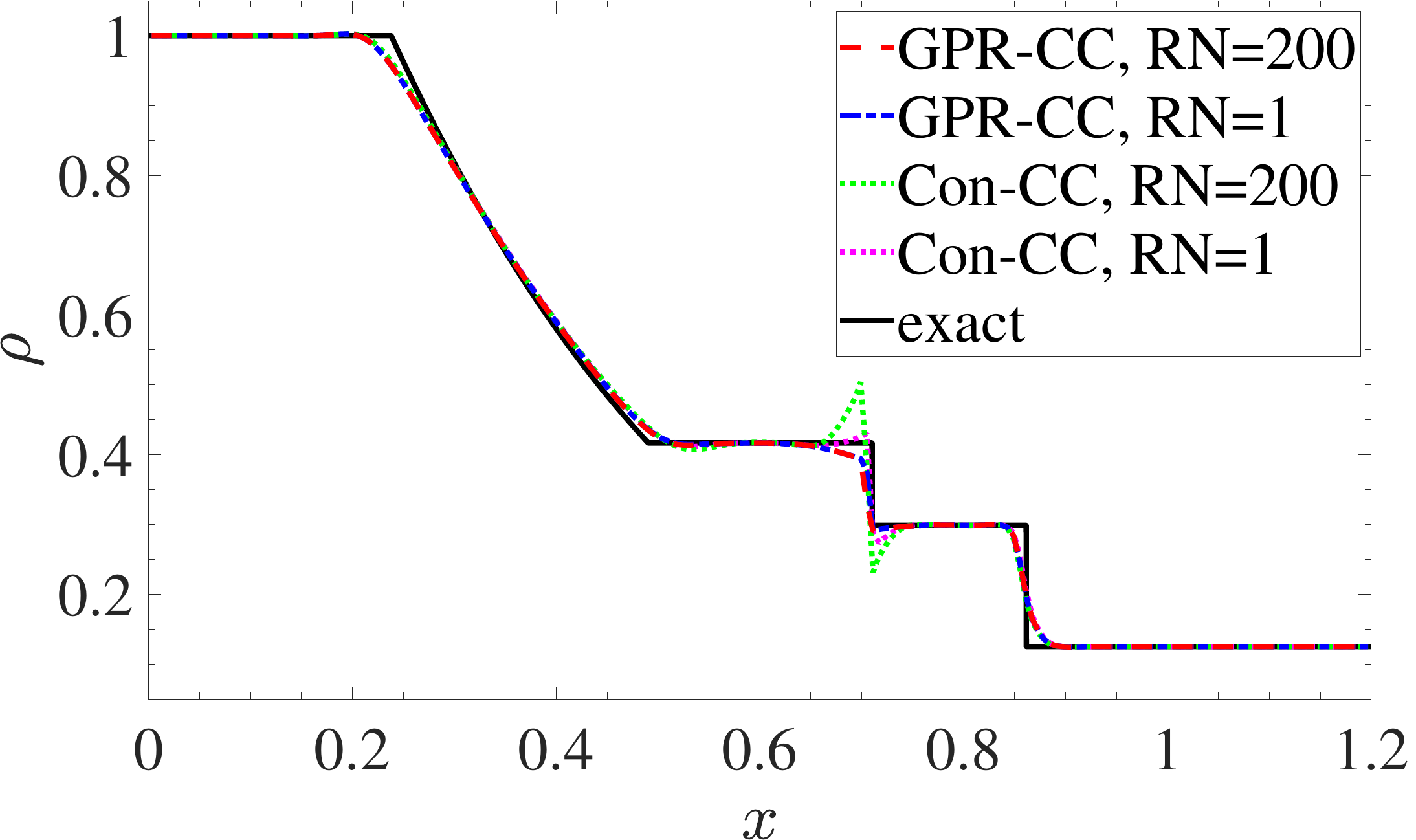}}
	\hspace{0.05\textwidth}
	\subfloat[Density $\rho$ (zoomed‐in)]{\label{sfig: t4-2}\includegraphics[height=0.165\textheight]{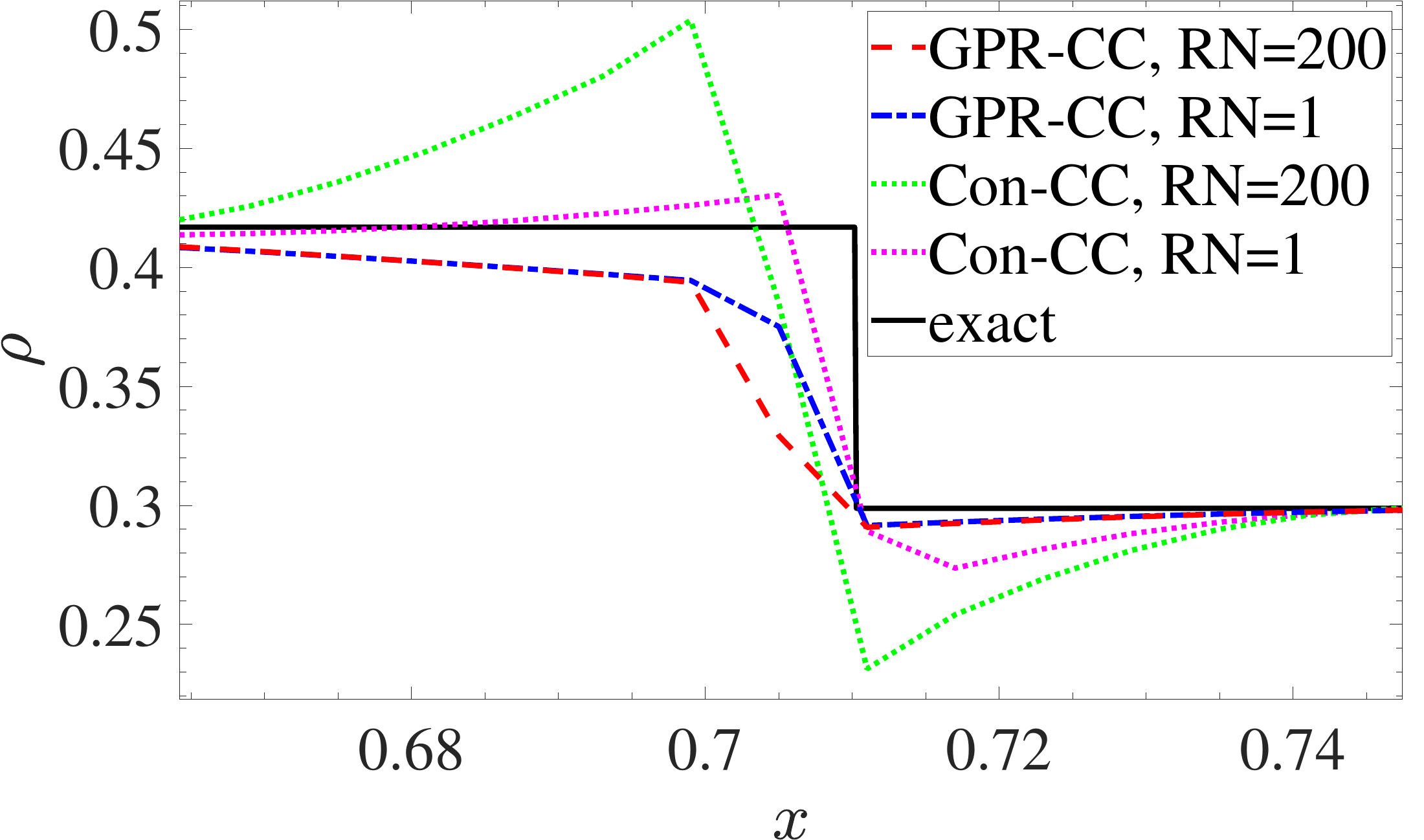}}
	\hfill
	\subfloat[Pressure $p$]{\label{sfig: t4-3}\includegraphics[height=0.17\textheight]{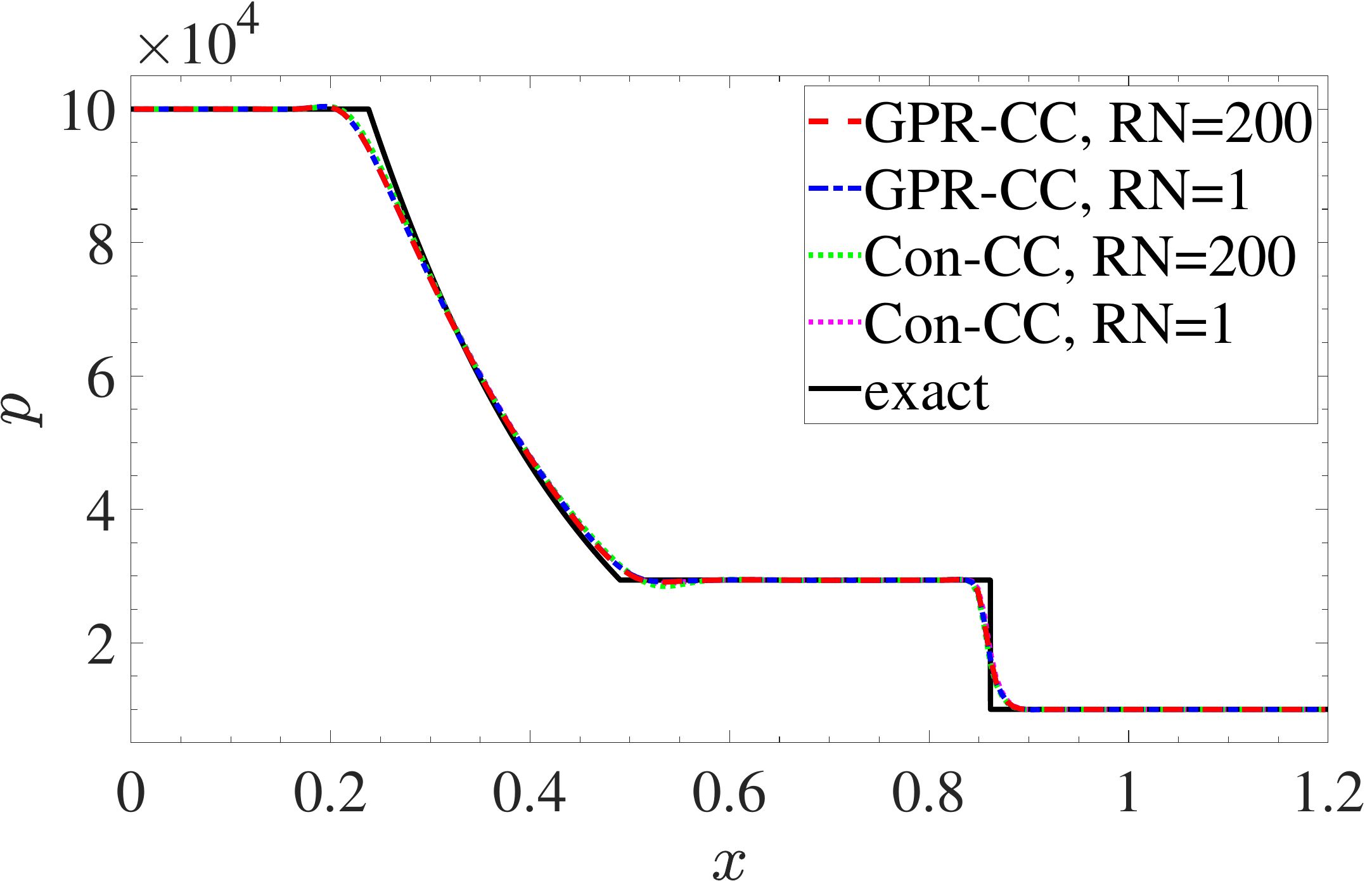}}
	\hspace{0.05\textwidth}
	\subfloat[Pressure $p$ (zoomed‐in)]{\label{sfig: t4-4}\includegraphics[height=0.17\textheight]{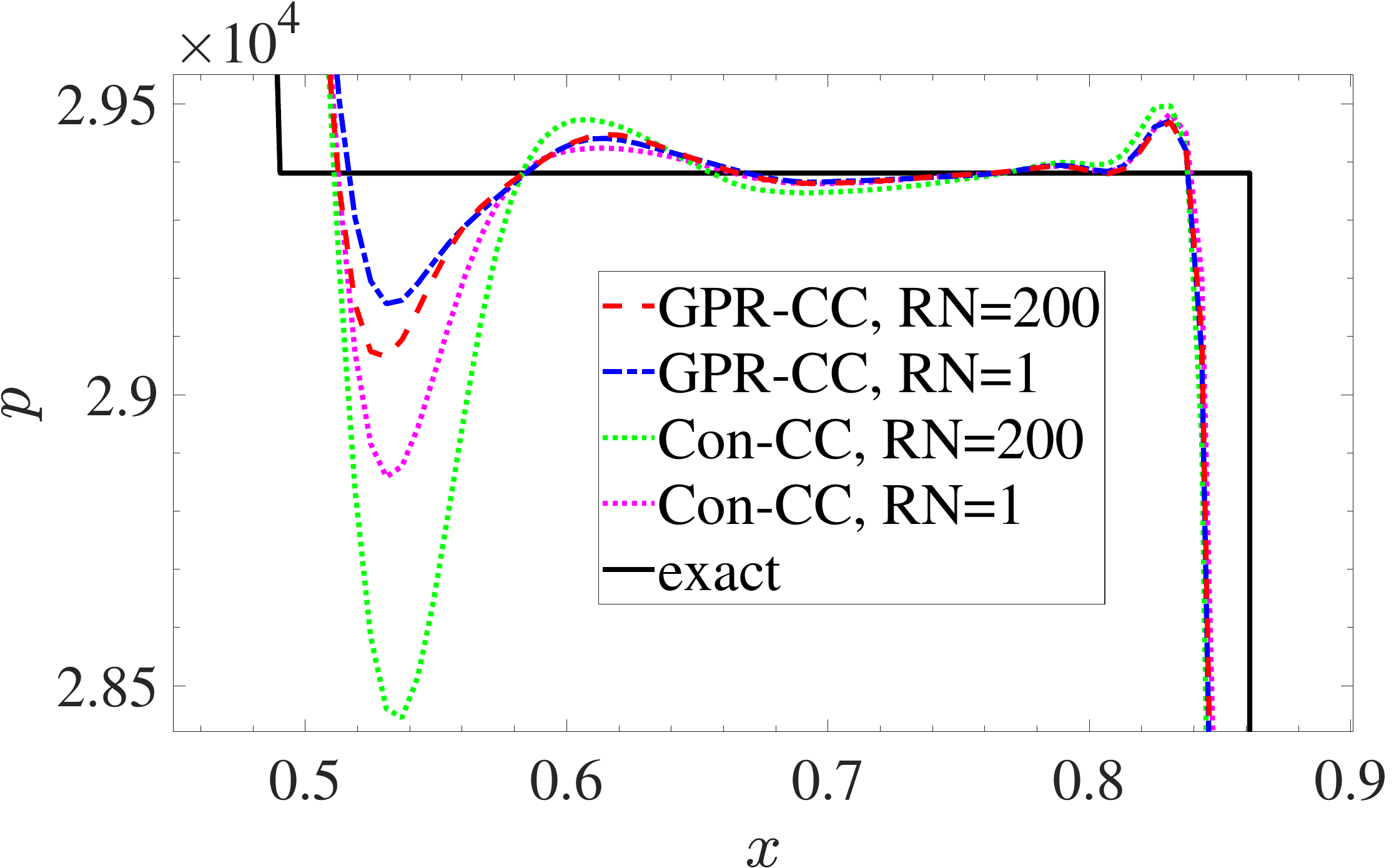}}
	\captionsetup{font=small}
	\caption{Air-helium shock‐tube problem (Problem~\ref{sec:AHST}). Cross‐sectional profiles of density and pressure at \(y=0\) for the GPR-CC and Con-CC schemes with reinitialization frequencies RN=1 and RN=200 on a \(200\times5\) grid.}
	\label{fig: t4-1}
\end{figure}

\subsubsection{Insensitivity to geometric perturbations}
\label{sec:GWRP}

Next we test the gas-water Riemann problem \cite{zheng2021high} with initial data
\[
(\rho,u,v,p,\gamma,B)=
\begin{cases}
	(1.241,\,0,\,2.753,\,1.4,\,0), \quad & x<0,\\
	(0.991,\,0,\,3.059\times10^{-4},\,5.5,\,1.505), \quad & x>0.
\end{cases}
\]
The computational domain is \([-5,5]\times[-0.125,0.125]\) with outflow boundaries, and the solution is advanced to \(T = 1\). Both the GPR-CC and Con-CC schemes are reinitialized
every time step. To probe the effect of geometric perturbations, at each Runge-Kutta stage we add
a tiny random perturbation of magnitude \(10^{-3}\,\Delta x\) to the level‐set function. Figure \ref{fig: t5-1}
shows the resulting profiles along \(y=0\). Whereas the Con-CC scheme exhibits pronounced spurious
oscillations under the perturbations, the GPR-CC scheme remains unaffected, confirming its GPR property.

\begin{figure}[!htb]
	\centering
	\subfloat[Density $\rho$]{\label{sfig: t5-1}\includegraphics[height=0.165\textheight]{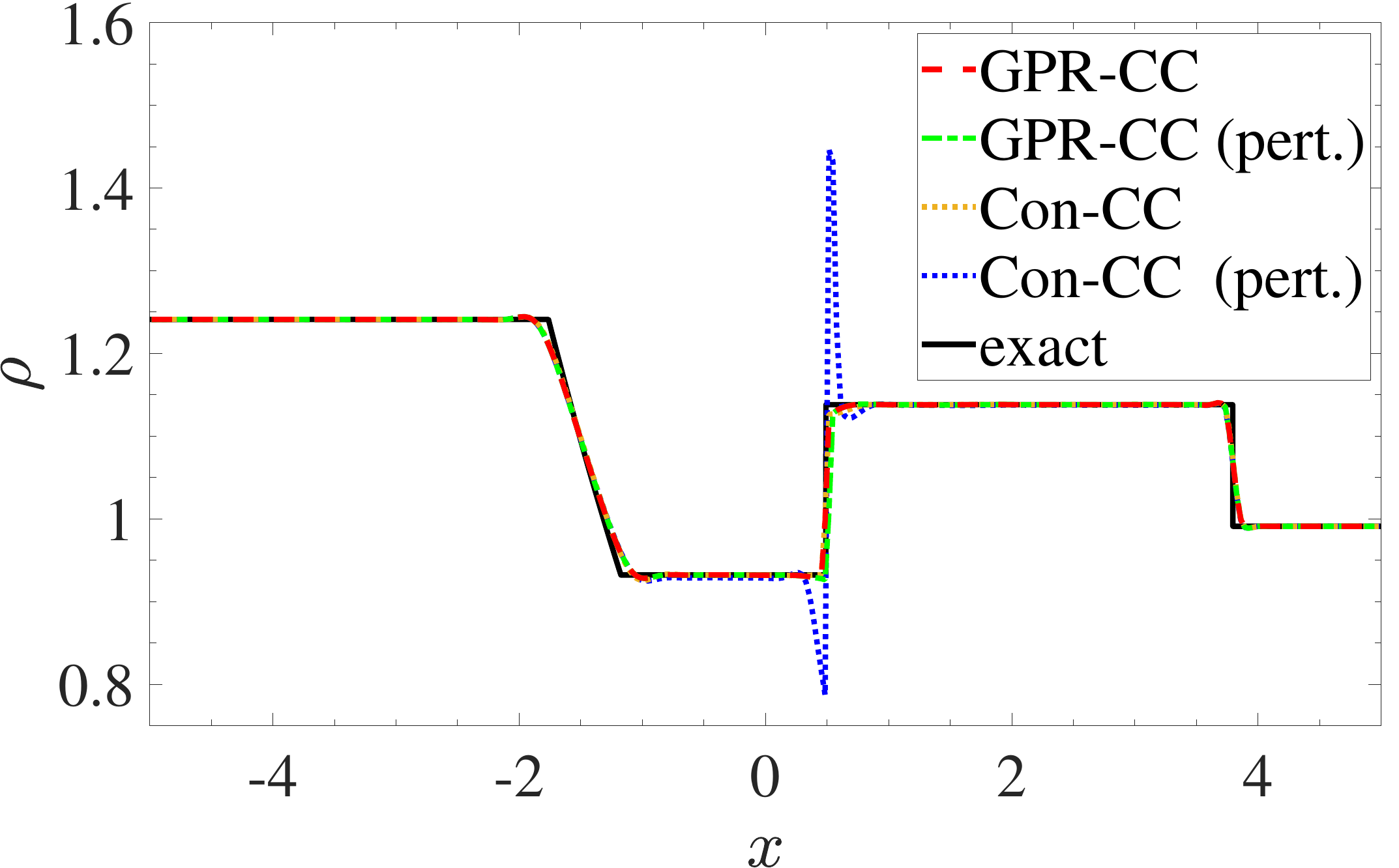}}
	\hspace{0.05\textwidth}
	\subfloat[Density $\rho$ (zoomed‐in)]{\label{sfig: t5-2}\includegraphics[height=0.160\textheight]{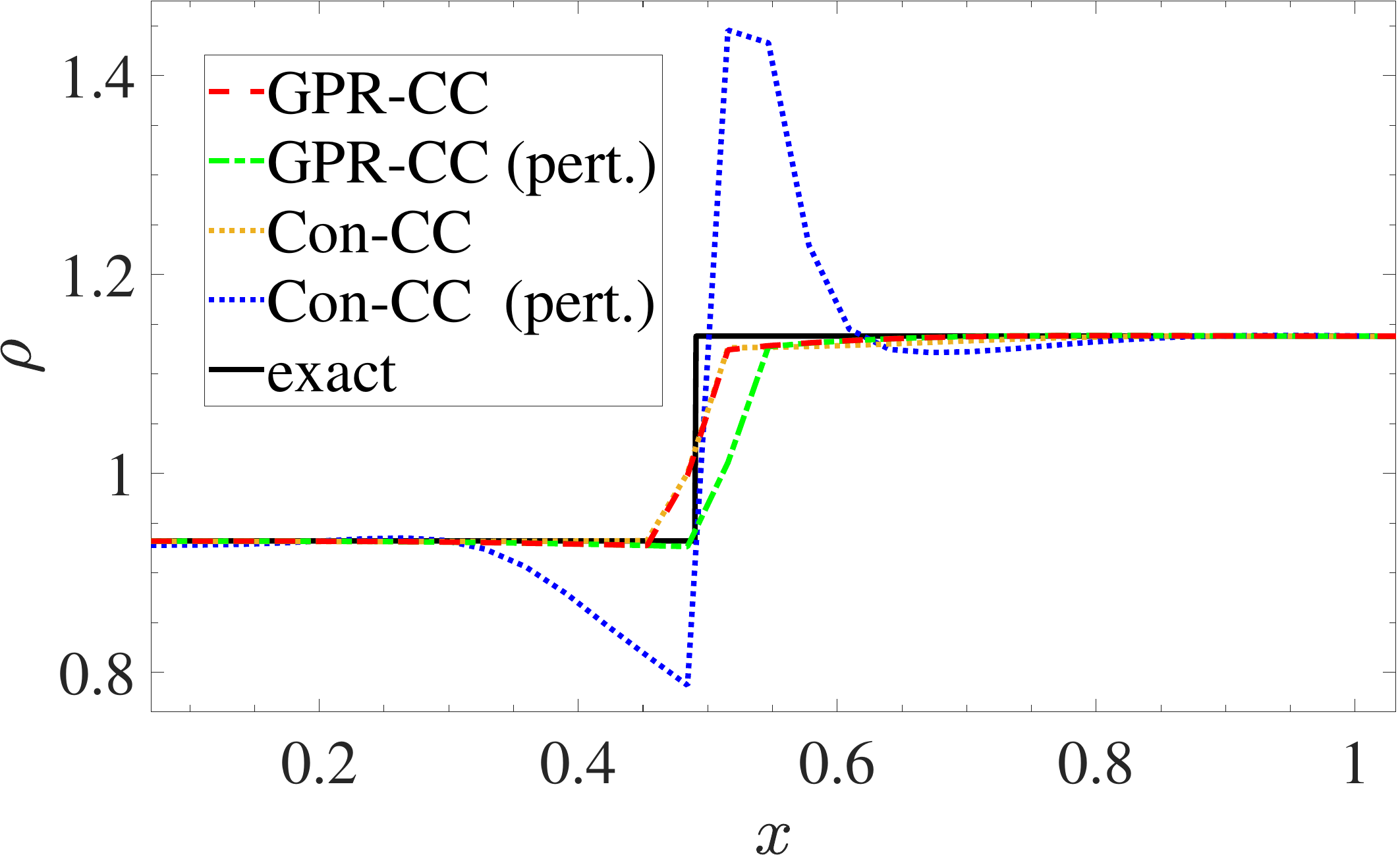}}
	
	\subfloat[Pressure $p$]{\label{sfig: t5-3}\includegraphics[height=0.165\textheight]{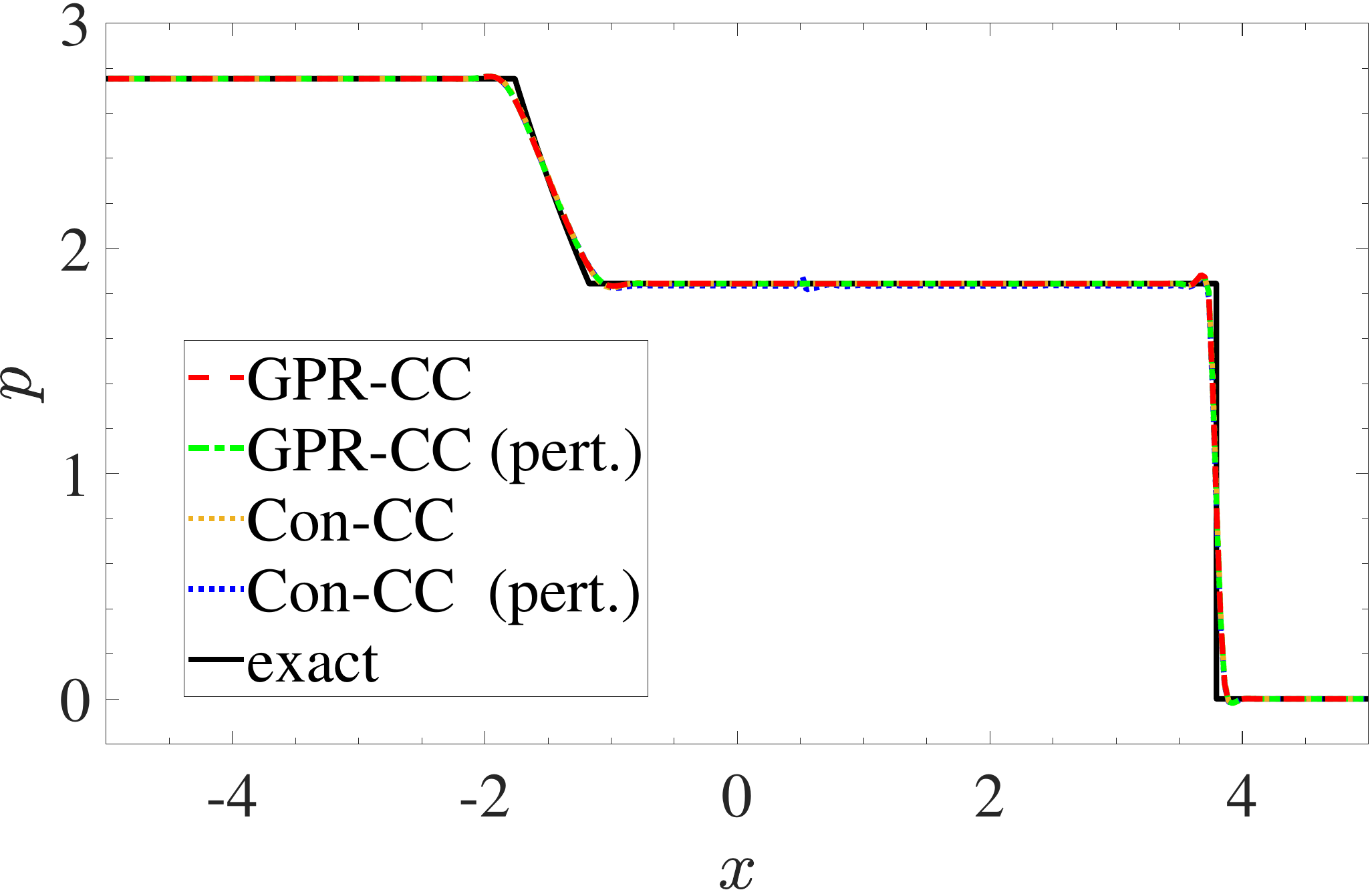}}
	\hspace{0.05\textwidth}
	\subfloat[Pressure $p$ (zoomed‐in)]{\label{sfig: t5-4}\includegraphics[height=0.162\textheight]{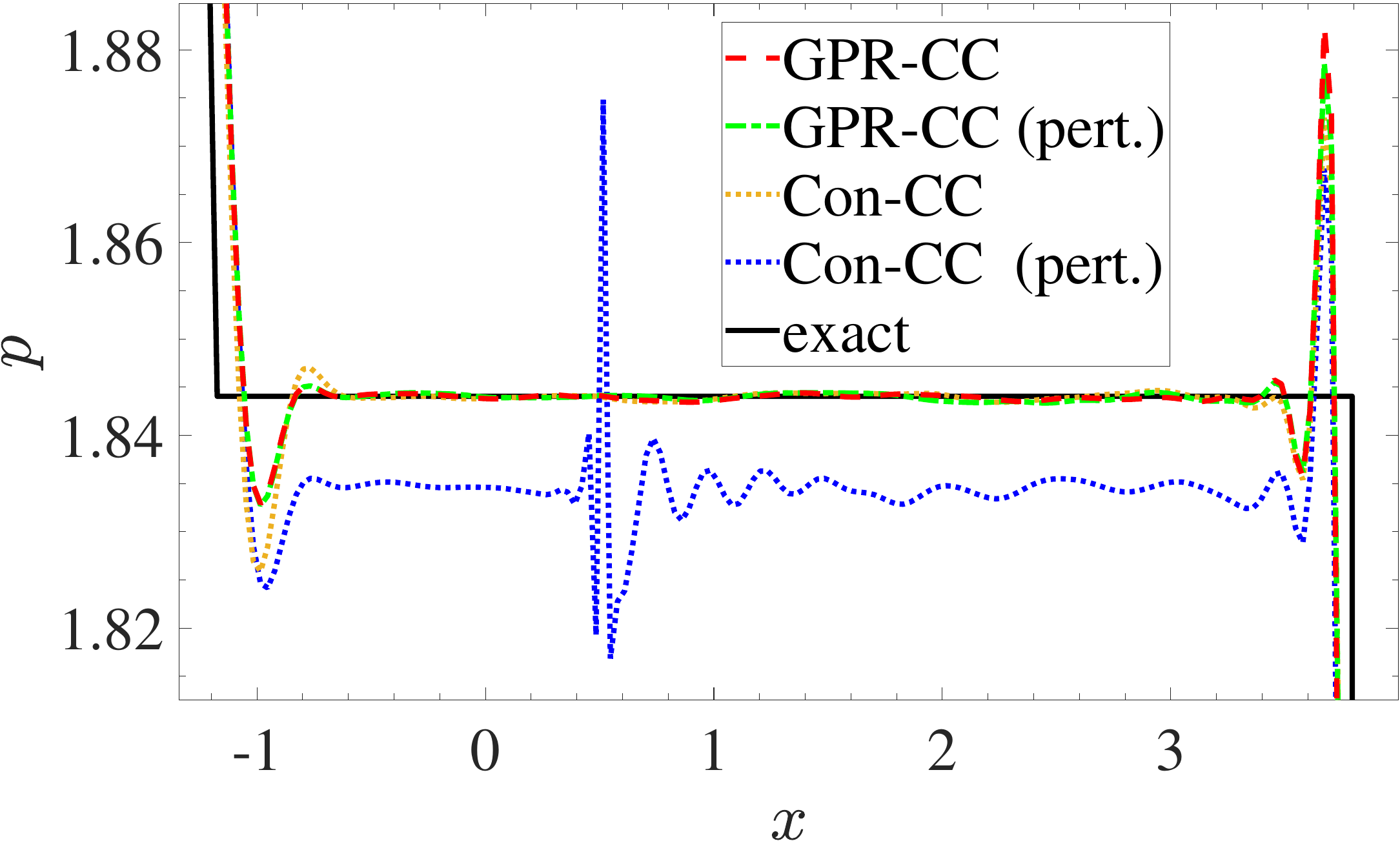}}
	\captionsetup{font=small}
	\caption{Gas-water Riemann problem (Problem~\ref{sec:GWRP}). Cross‐sectional profiles of density and pressure at \(y=0\) for the GPR-CC and Con-CC schemes and with their level-set-perturbed variants (GPR-CC (pert.) and Con-CC (pert.)), where at each Runge--Kutta stage the level‐set function is randomly perturbed by an amplitude of \(10^{-3}\,\Delta x\). $N_x\times N_y=300\times8$.}
	\label{fig: t5-1}
\end{figure}

\subsection{Gas-gas shock bubble problem}
\label{test6}
We consider the two‐dimensional Richtmyer--Meshkov instability benchmark in which a planar Mach\,1.22 air shock interacts with a unit‐radius cylindrical helium bubble. The computational domain is $\Omega = [-3,4]\times[-3,3],$
initially filled with quiescent air ($\rho=1$, $p=1$, $\gamma=1.4$, $B=0$).  A stationary helium bubble ($\rho=0.138$, $p=1$, $\gamma=5/3$, $B=0$) is located by $\phi_0(x,y)=\sqrt{x^2+y^2}-1<0$. To generate the Mach\,1.22 shock, we prescribe post‐shock air conditions for $x<-1.2$ with
$(\rho,v_x,v_y,p,\gamma,B) = (1.3764,\;0.394,\;0,\;1.5698,\;1.4,\;0),$
while retaining the pre‐shock state for $x>-1.2$. Non‐reflecting boundary conditions are imposed at $y=\pm3$, with inflow condition at $x=-3$ and outflow condition at $x=4$. 

Figure \ref{fig: SGGI-3} presents the interface morphology at $t=4$. The GPR-CC scheme maintains a line‐thin, sharply resolved interface, whereas DG-DIM exhibits noticeable interface smearing. These results demonstrate that the proposed GPR-CC framework delivers substantially higher resolution in the immediate vicinity of the material interface.


\begin{figure}[!htb]
	\centering
	\subfloat[GPR-CC]{\label{sfig: SGGI-2}\includegraphics[width=0.35\textwidth]{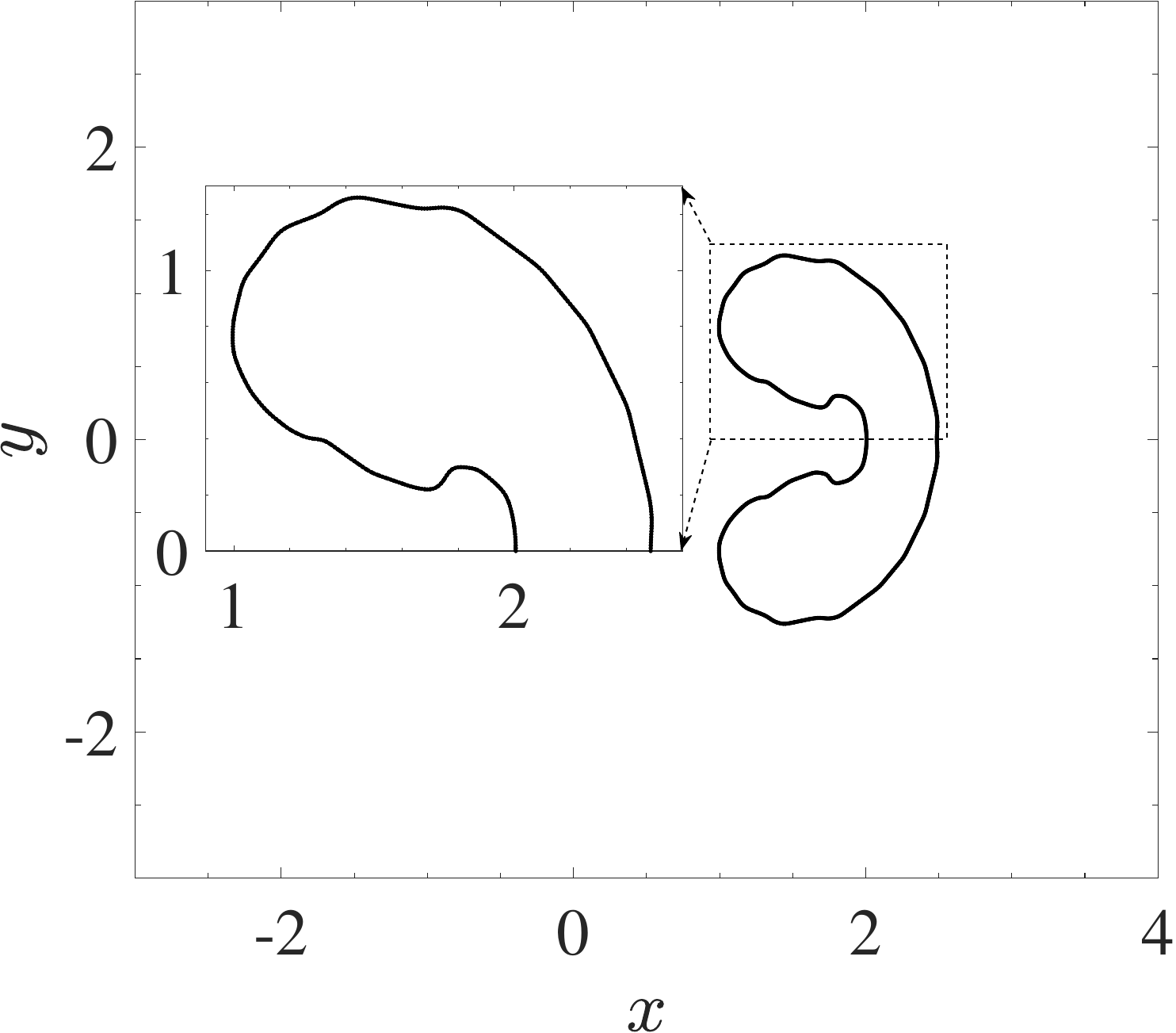}}
	\hspace{0.05\textwidth}
	\subfloat[DG-DIM]{\label{sfig: SGGI-8}\includegraphics[width=0.35\textwidth]{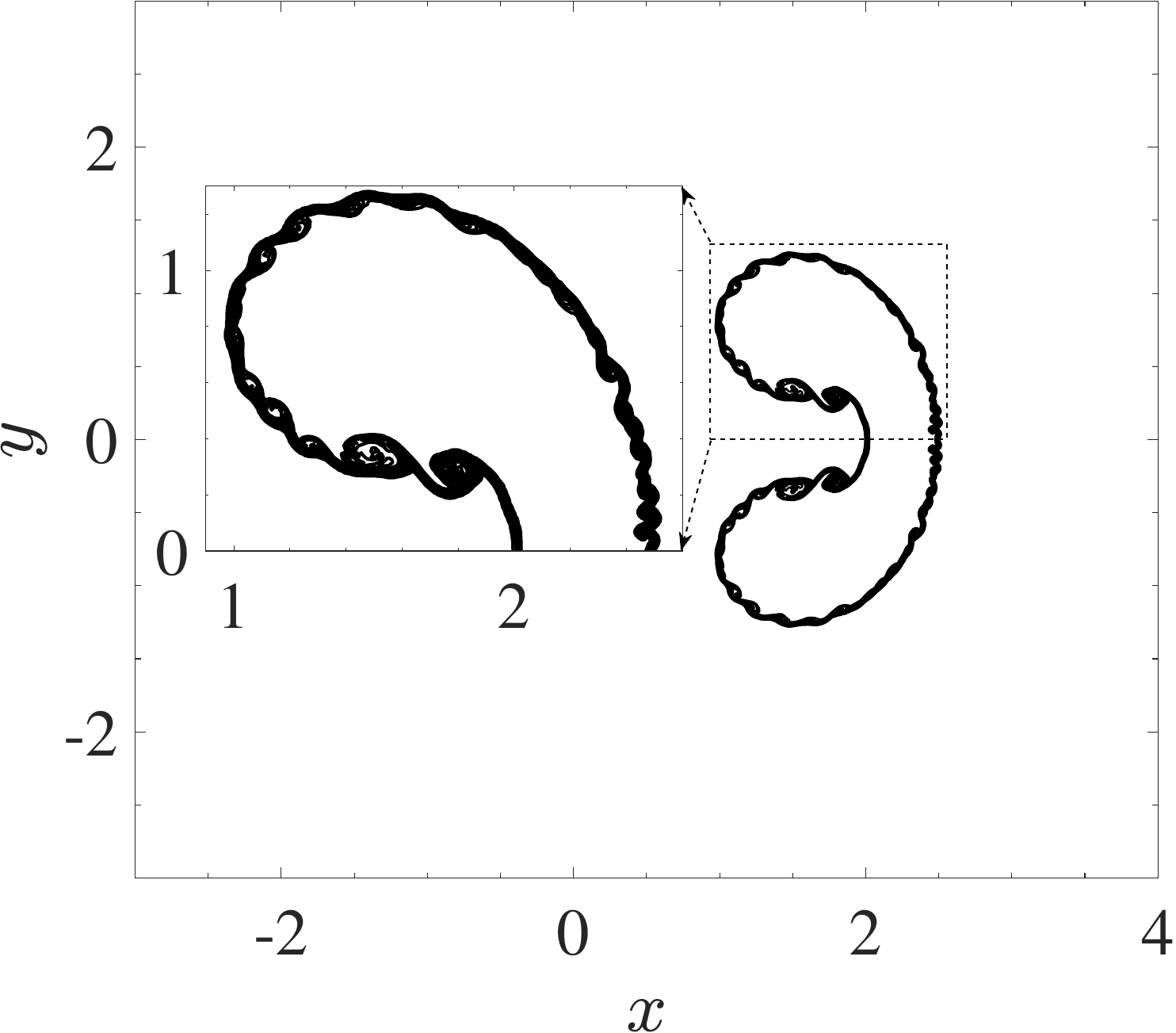}}
	\captionsetup{font=small}
	\caption{Gas-gas shock bubble problem (Problem \ref{test7}). The interface positions at \(T = 4\) computed by the GPR-CC and DG-DIM schemes with \(N_x \times N_y = 700\times600\). In GPR-CC the interface is explicitly captured via a level-set method, whereas in DG-DIM it is delineated by twenty volume‐fraction contours ranging from 0.001 to 0.999.}
	\label{fig: SGGI-3}
\end{figure}

\subsection{Water-gas shock bubble problem}\label{test7}
We next consider a two‐dimensional water-gas shock bubble problem, in which a planar Mach\,1.65 water shock impacts on a unit‐radius cylindrical air bubble. The computational domain is $\Omega = [-4,3]\times[-3,3],$ initial level set function $\phi_0(x,y) = \sqrt{x^2 + y^2} - 1,$ with \(\phi<0\) denoting the air (($\gamma,B$)$=(1.4,0)$) and \(\phi>0\) the water (($\gamma,B$)$=(7.15,3309)$).
The non‐dimensional initial states are
\[
(\rho,u,v,p) =
\begin{cases}
	(1000,0,0,1), \quad & \text{pre‐shocked water (}x<-1.2\text{)},\\
	(1176.3576,1.1692,0,9120), \quad & \text{post‐shocked water (}x>-1.2\text{)},\\
	(1,0,0,1), \quad & \text{air bubble}.
\end{cases}
\]
Non-reflective open boundary conditions are applied at \(y=\pm3\), an inflow boundary at \(x=-4\) prescribes the post‐shock water state, and a non‐reflecting outflow boundary is enforced at \(x=3\). Density contours are recorded at \(t=0.471,0.51\) to monitor interface deformation, jet penetration, and vortex roll‐up.

Figure \ref{fig: SWGI-4} presents the evolution of density contours obtained using the GPR-CC and DG‐DIM schemes, while Figure \ref{fig: SWGI-1} compares the bubble interfaces computed by the GPR‐CC and DG-DIM schemes. Throughout the simulation, the GPR‐CC scheme maintains a line‐thin, sharply resolved interface, whereas DG‐DIM exhibits noticeable smearing---demonstrating the superior material‐interface resolution afforded by the GPR‐CC framework.


\begin{figure}[!htb]
	\centering
	\subfloat[$t = 0.471$, GPR-CC]{\label{sfig: SWGI-6}\includegraphics[width=0.40\textwidth]{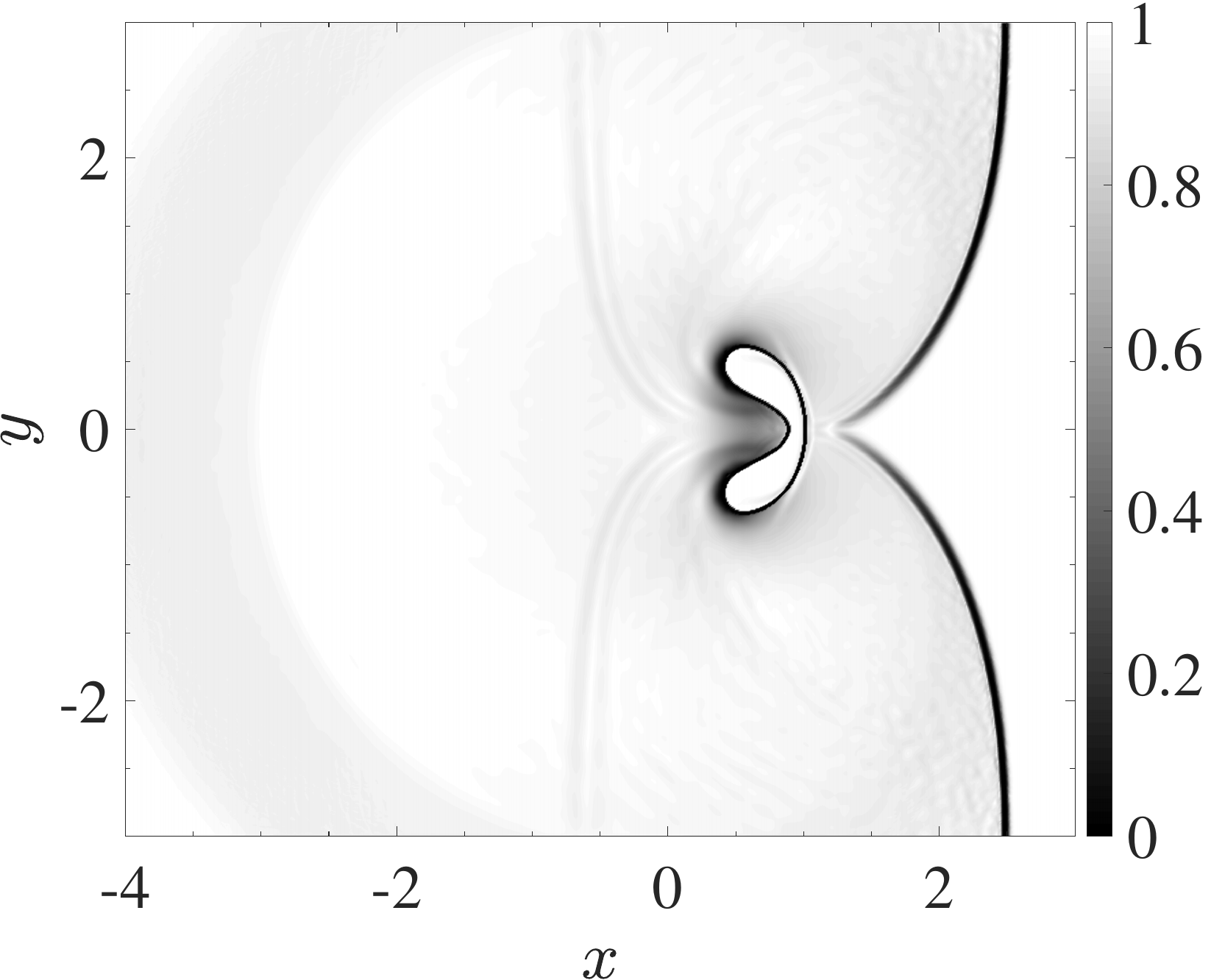}}
	\hspace{0.05\textwidth}
	\subfloat[$t = 0.51$, GPR-CC]{\label{sfig: SWGI-7}\includegraphics[width=0.40\textwidth]{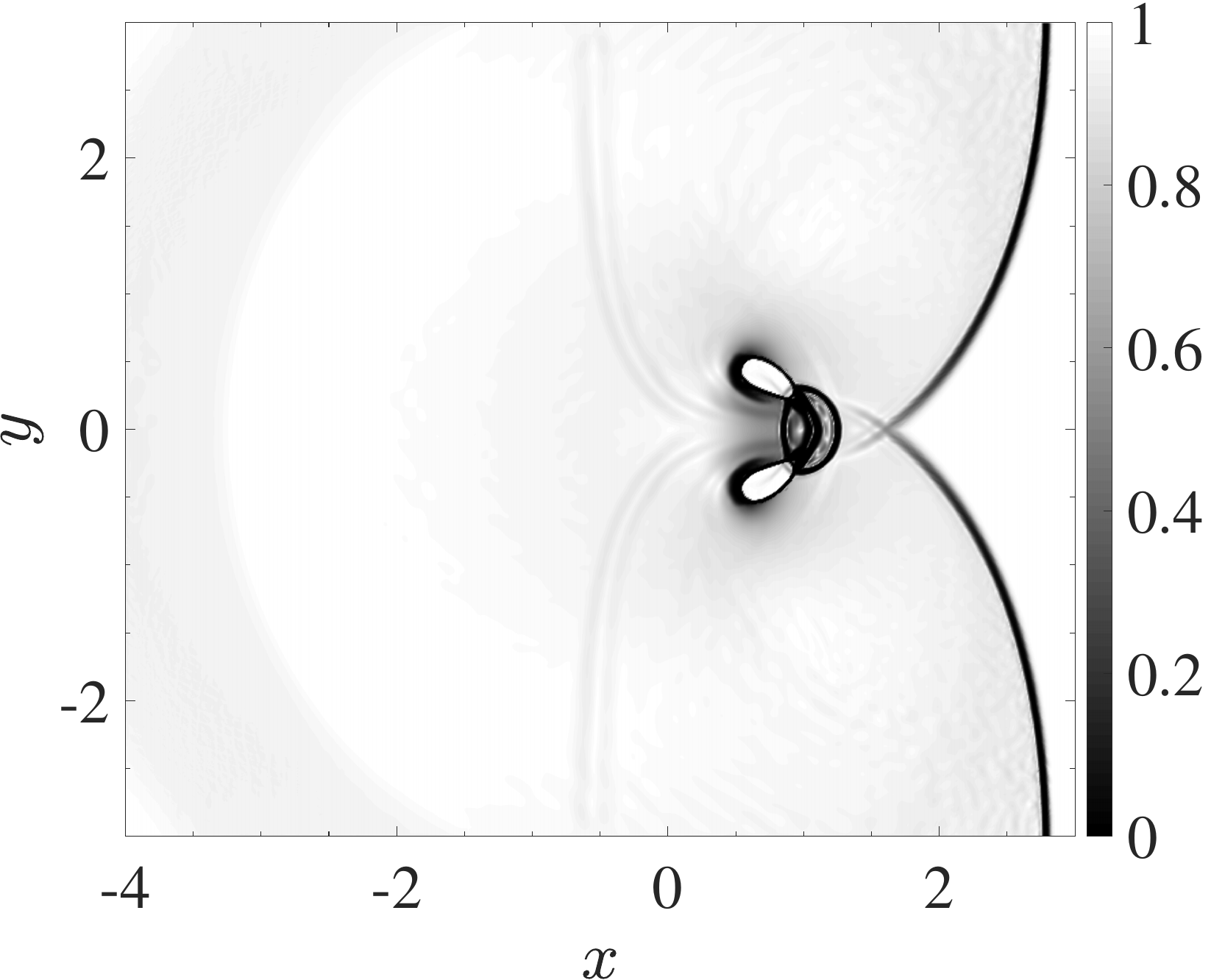}}
	
	\subfloat[$t = 0.471$, DG-DIM]{\label{sfig: SWGI-13}\includegraphics[width=0.40\textwidth]{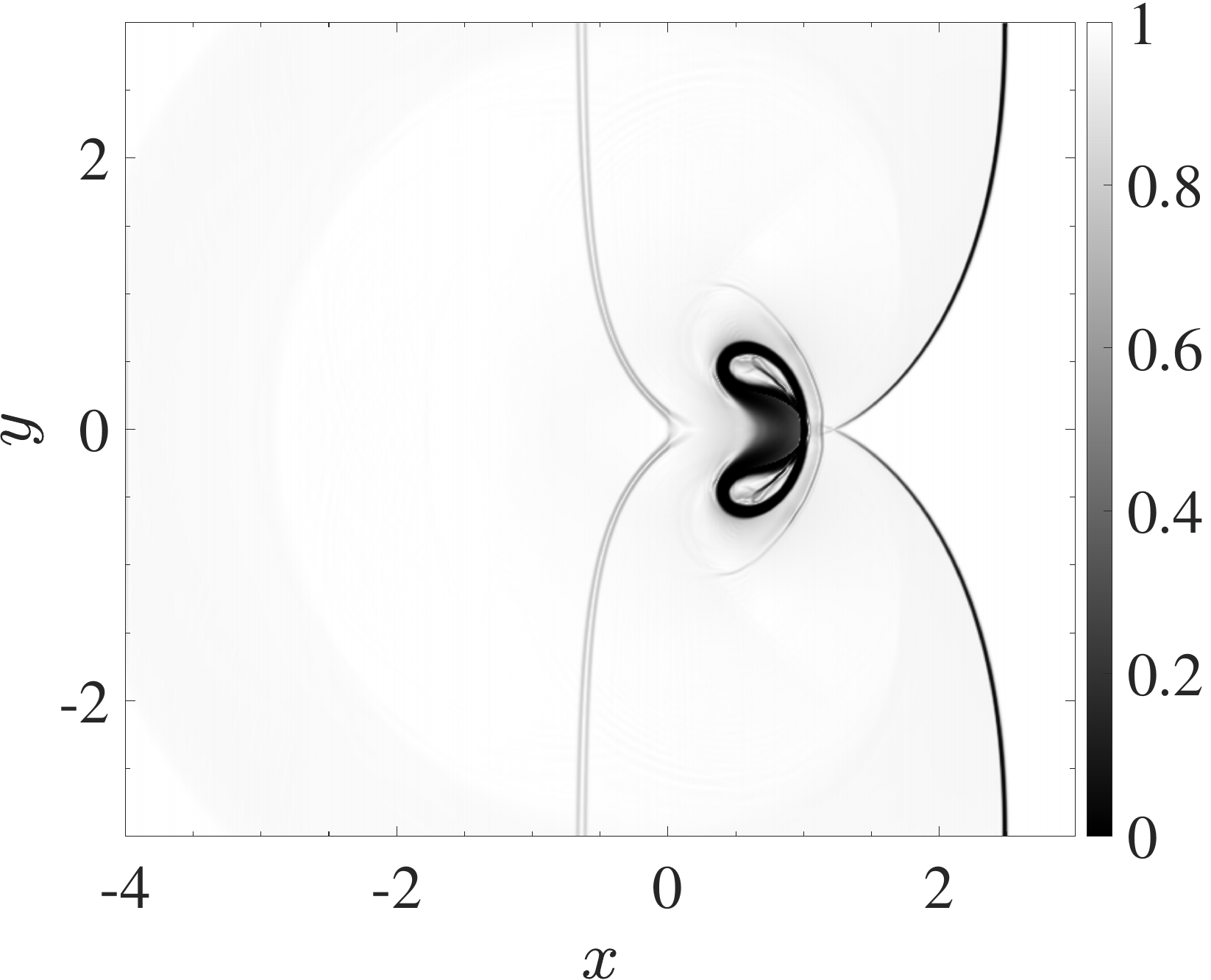}}
	\hspace{0.05\textwidth}
	\subfloat[$t = 0.51$, DG-DIM]{\label{sfig: SWGI-14}\includegraphics[width=0.40\textwidth]{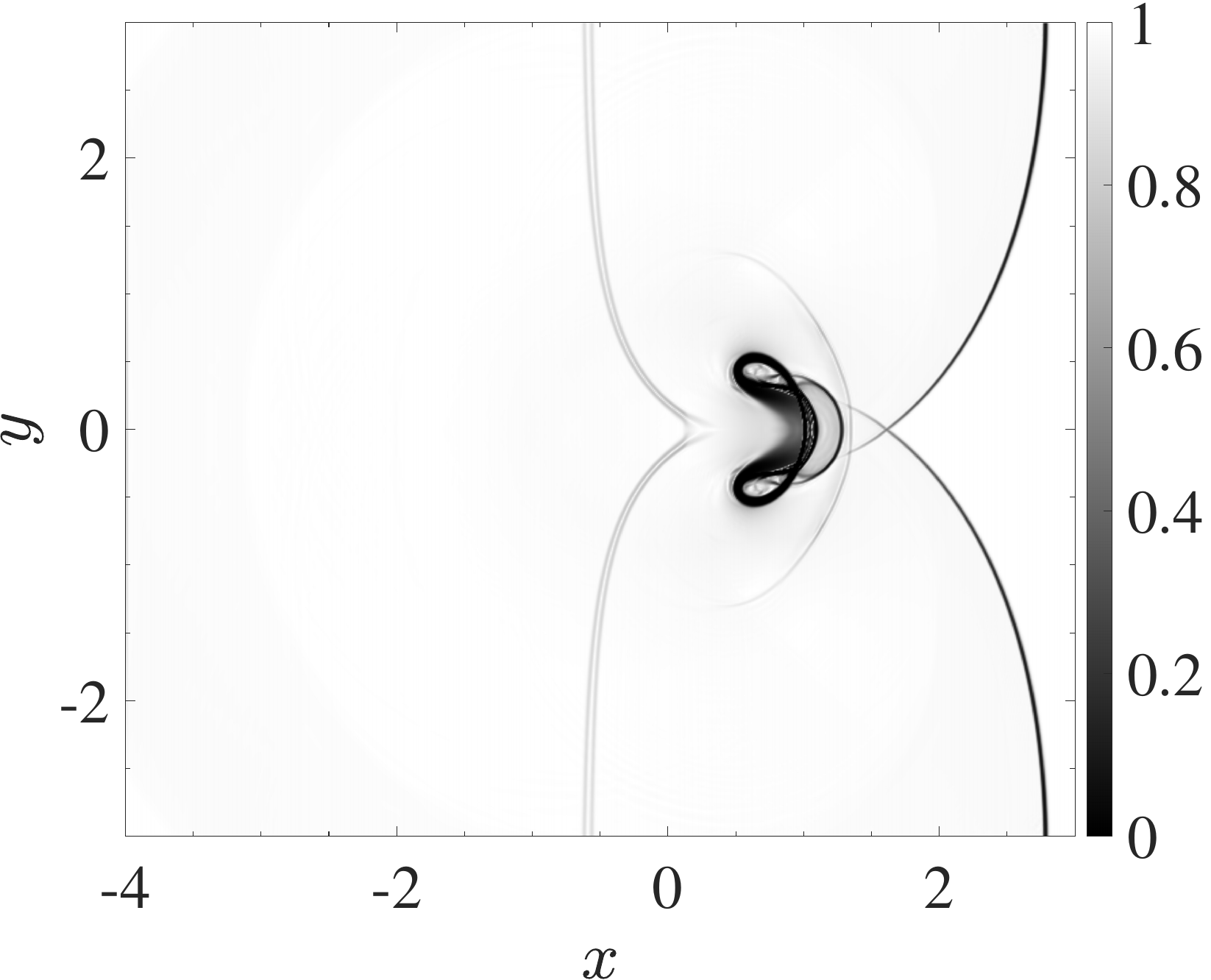}}
	\captionsetup{font=small}
	\caption{Water-gas shock bubble problem (Problem~\ref{test7}). Evolution of density contours obtained by the GPR-CC and DG-DIM schemes on a \(700\times600\) grid.}
	\label{fig: SWGI-4}
\end{figure}

\begin{figure}[!htb]
	\centering
	\subfloat[DG-DIM]{\label{sfig: SWGI-9}\includegraphics[width=0.35\textwidth]{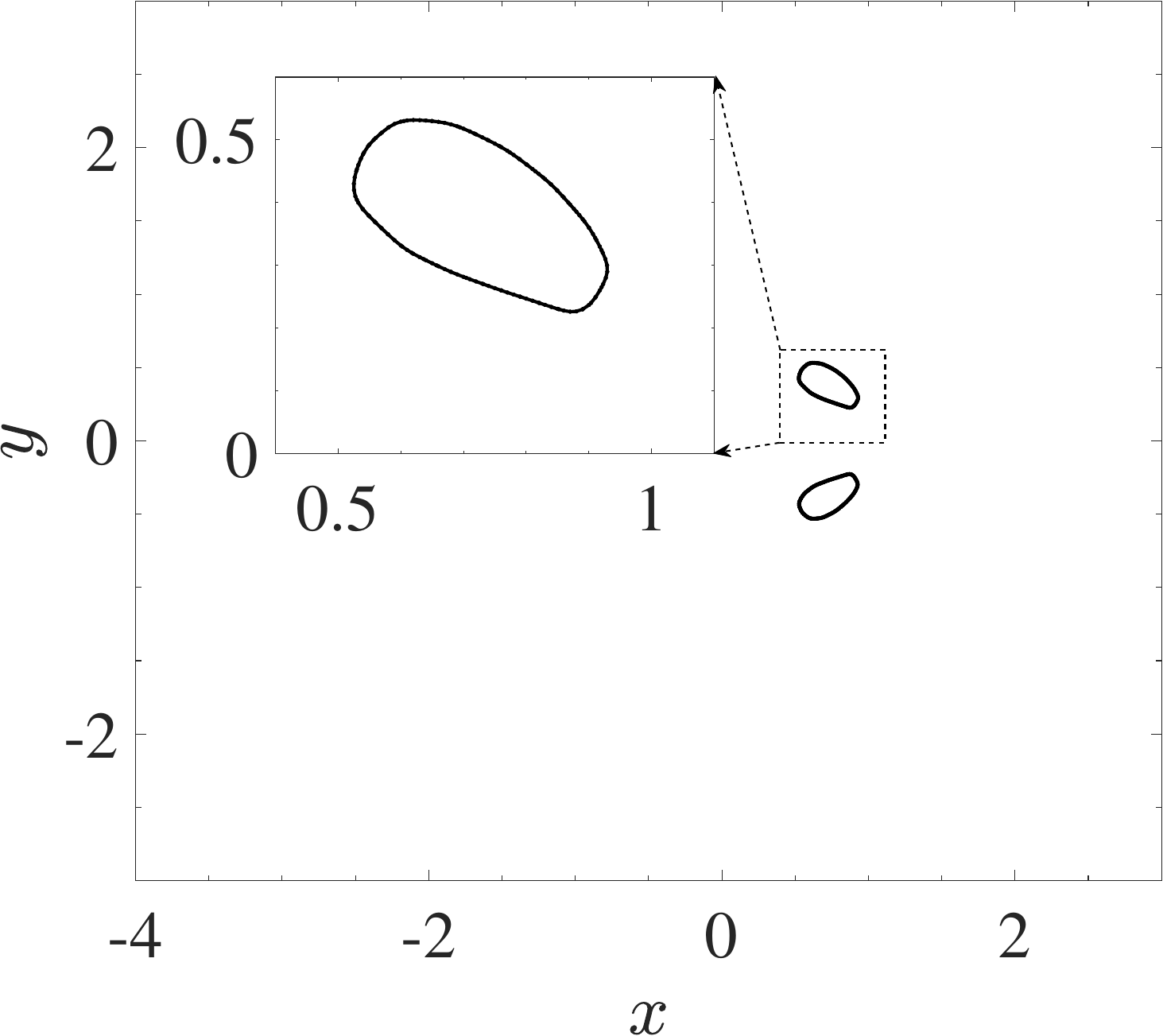}}
	\hspace{0.05\textwidth}
	\subfloat[GPR-CC]{\label{sfig: SWGI-2}\includegraphics[width=0.35\textwidth]{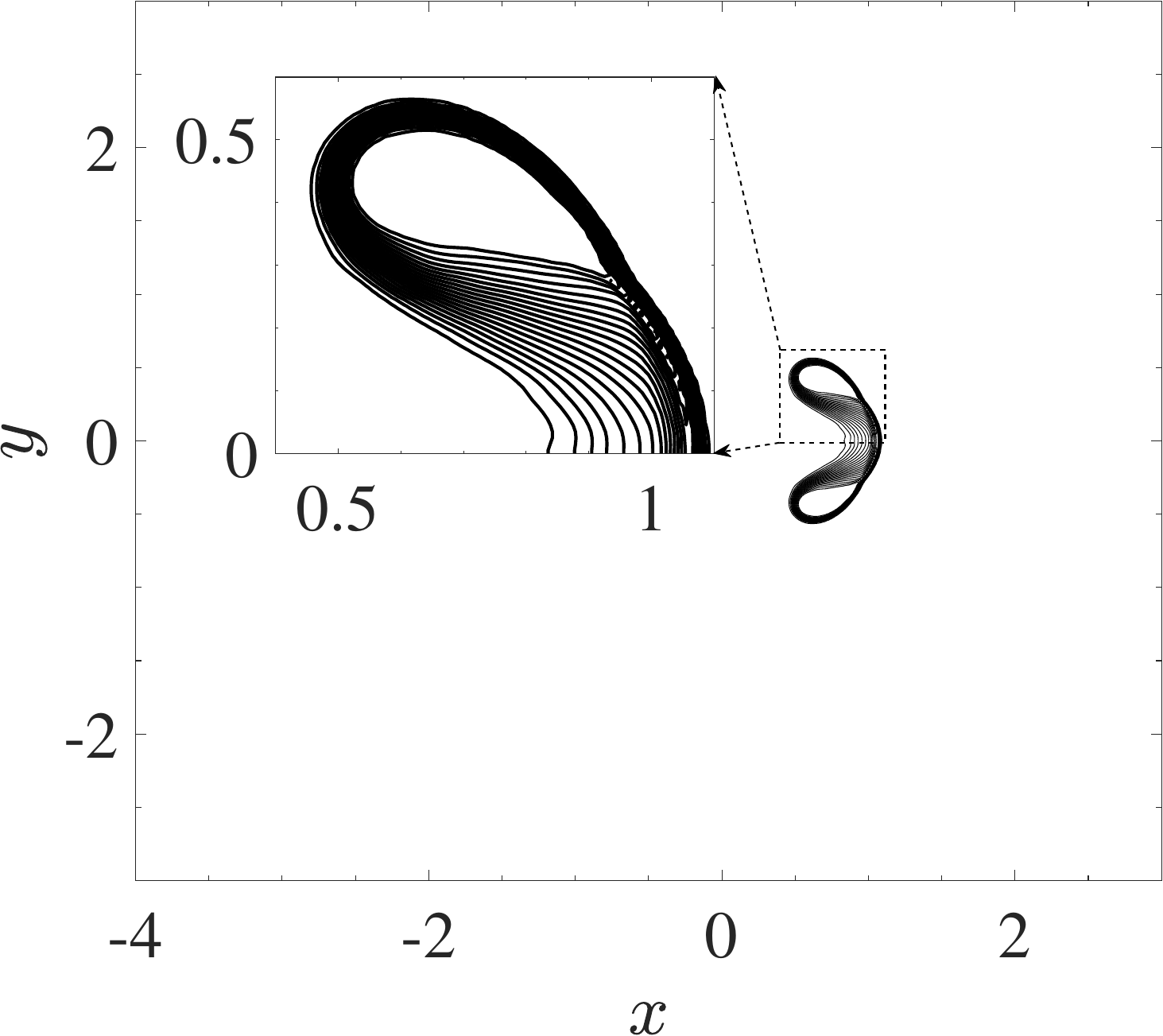}}
	\captionsetup{font=small}
	\caption{Water-gas shock bubble problem (Problem \ref{test7}). The interface positions at \(T = 0.51\) computed by the GPR-CC and DG-DIM schemes with \(N_x \times N_y = 700\times600\). In GPR-CC the interface is explicitly captured via a level-set method, whereas in DG-DIM it is delineated by twenty volume‐fraction contours ranging from 0.001 to 0.999.}
	\label{fig: SWGI-1}
\end{figure}

\subsection{Underwater explosion}\label{test8}

In the final problem, we consider an underwater explosion problem, in which a highly compressed cylindrical air bubble is placed in quiescent water beneath a free surface. This configuration, previously studied in \cite{DENG2018945,cheng2020quasi}, presents a challenge for interface-capturing methods by undergoing a topological change.

The computational domain is $\Omega = [-2,2]\times[-1.5,1.5],$
and the initial interfaces are defined by the level-set function $\phi(x,y)
=\min\bigl(\sqrt{x^2+(y+0.3)^2}-0.12,\;-y\bigr),$
with \(\phi<0\) denoting the gas regions (submerged bubble and above-water air) and \(\phi>0\) the water. The non-dimensional initial states are
\[
(\rho,u,v,p,\gamma,B)=
\begin{cases}
	(1,\;0,\;0,\;1.01325,\;4.4,\;6000), \quad &\text{water } (\phi>0),\\
	(0.001225,\,0,\,0,\,1.01325,\,1.4,\,0), \quad &\text{atmospheric air }(y>0,~\phi<0),\\
	(1.25,\,0,\,0,\,10000,\,1.4,\,0), \quad &\text{underwater air bubble }(\text{else}).
\end{cases}
\]
A reflecting boundary condition is applied at \(y=-1.5\), while non-reflecting conditions are imposed on the remaining boundaries. Density contours are recorded at times \(t=0.0083,0.0128,0.019\) to capture the bubble's violent expansion.

Figure~\ref{fig: UWE-1} presents the evolution of the density fields computed with our GPR-CC scheme. The GPR-CC method accurately and stably captures the interface dynamics throughout the computation, demonstrating the robustness of the GPR-CC framework under extreme underwater explosion conditions.

\begin{figure}[!htb]
	\centering
	\subfloat[$t = 0.0083$]{\label{sfig: UWE-4}\includegraphics[width=0.34\textwidth]{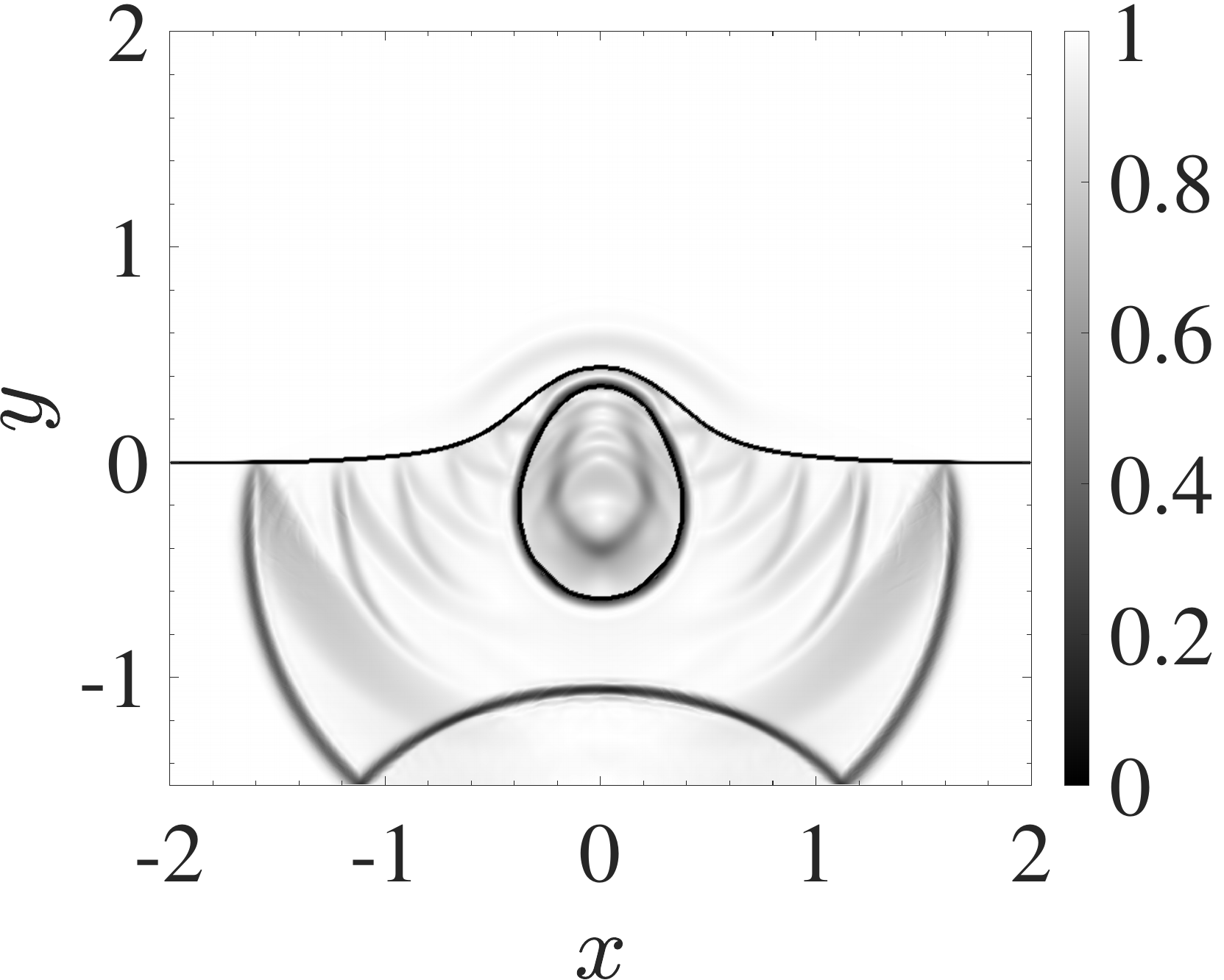}}
	\subfloat[$t = 0.0128$]{\label{sfig: UWE-5}\includegraphics[width=0.34\textwidth]{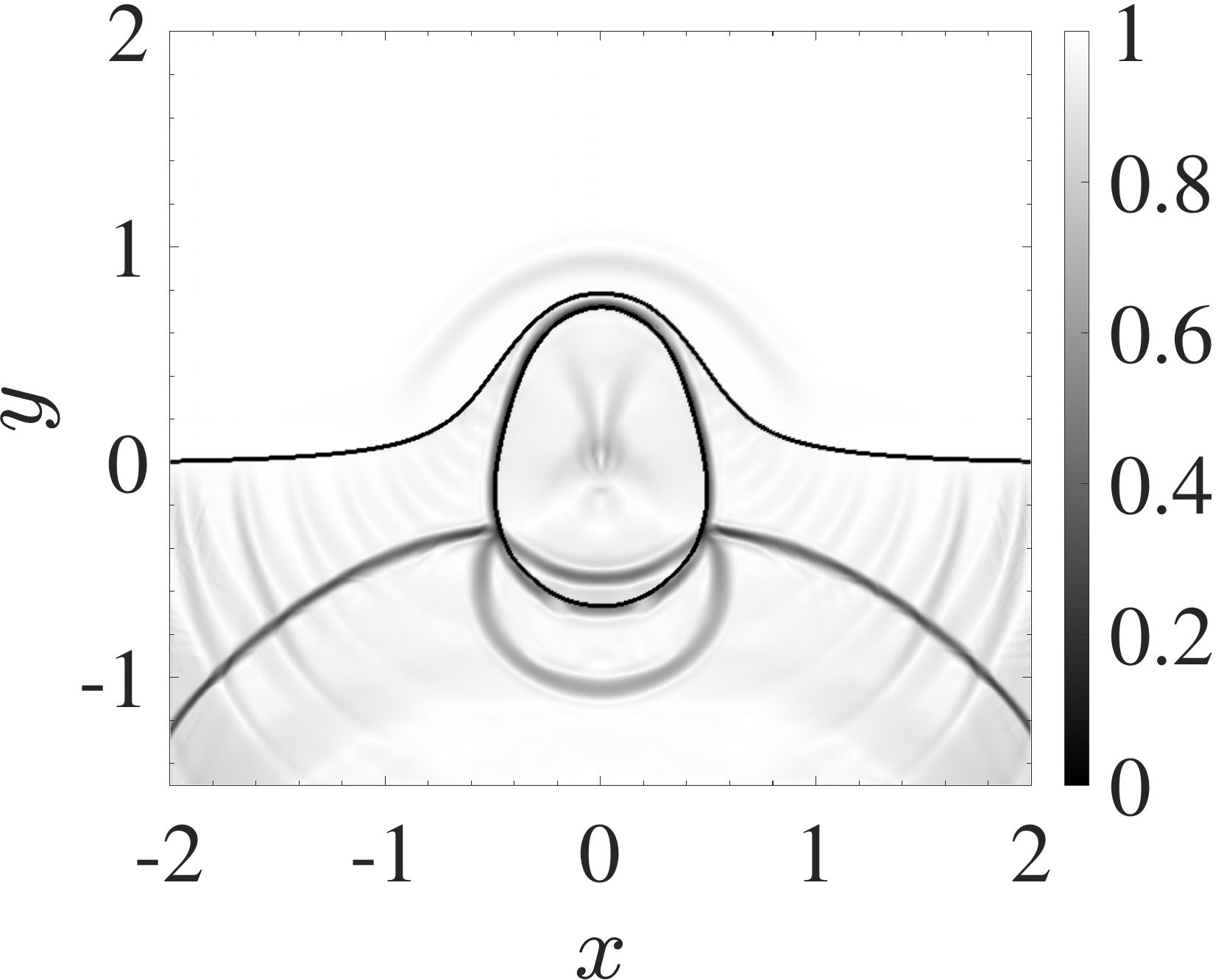}}
	\subfloat[$t = 0.019$]{\label{sfig: UWE-6}\includegraphics[width=0.34\textwidth]{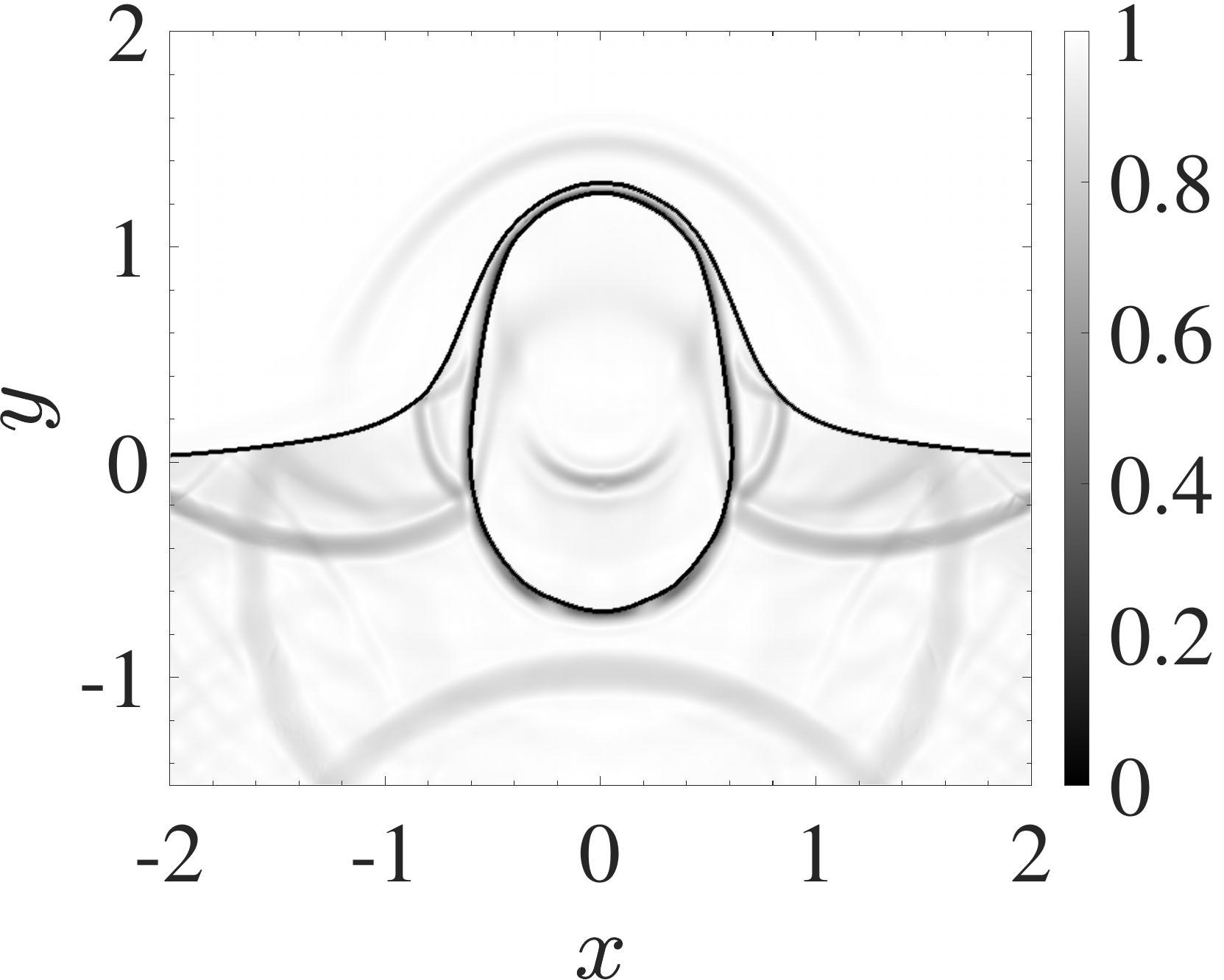}}
	\captionsetup{font=small}
	\caption{Schlieren images of density for the underwater explosion problem (Problem~\ref{test8}), computed with the GPR-CC scheme on a \(600\times450\) grid.}
	\label{fig: UWE-1}
\end{figure}


\section{Conclusions}\label{sec:conclusion}
This paper has developed a \emph{geometric-perturbation-robust} (GPR) moving-interface cut-cell framework that overcomes three challenges in compressible two-material flow simulation: exact preservation of pressure equilibrium, robustness to interface-location errors, and genuinely high-order accuracy. The central innovation is the \emph{evolved geometric-moment} formulation: by advancing every geometric moment through an auxiliary transport equation discretized consistently with the flow solver, we extend the classical geometric conservation law from cell volumes to \emph{all} higher-order geometric moments. We rigorously prove that, for the pure advection equation $u_t = 0,$ if the same linear spatial discretization operator is applied to both the evolved geometric moments and $u$, polynomials of \textbf{arbitrary degree} are transported exactly. This strict synchronization between geometry and conserved variables eliminates the accuracy degradation and instability that arise from geometric inconsistencies on deforming meshes. To prevent reconstruction from destroying pressure equilibrium, we introduced the concept of an \emph{equilibrium-compatible} (EC) reconstruction and showed how a minimal, fully local modification equips standard WENO procedures with this property. We detail a third-order EC multi-resolution WENO (EC-MRWENO) variant and employ it both for cell-interface flux calculations and for the redistribution procedure following each Runge--Kutta step. The tight coupling of EC-MRWENO with evolved geometric moments yields, to our knowledge, the first cut-cell solver that is simultaneously
\begin{itemize}
	\item \textbf{GPR}: it preserves machine-precision pressure equilibrium even under small interface perturbations;
	\item \textbf{Genuinely high order}: it achieves second order accuracy at the interface (even across a pure contact discontinuity), while preserving third-order accuracy elsewhere;
	\item \textbf{Topology-change capability:} it remains stable under severe interface deformation and breakup.
\end{itemize}
Extensive two-dimensional test cases validate these properties and demonstrate significant accuracy gains over conservative schemes. Because both the auxiliary geometric-moment equations and the EC modification are entirely local, the proposed techniques are non-intrusive: they can be grafted onto a broad class of cut-cell and general moving-mesh solvers, and are not limited to multi-material simulations.  
While the present implementation attains second-order accuracy at straight interfaces, the framework is fully compatible with curved interface and higher-order EC reconstructions; our preliminary results for genuinely higher-order GPR cut-cell schemes are promising and will be reported separately. The ideas of GPR, evolved geometric moments, and EC reconstruction will prove valuable well beyond the context considered here, enabling the design of stable, consistent, high-order algorithms on dynamically evolving meshes across a wide range of scientific-computing applications.

\section*{Acknowledgments}
We thank Lecturer Feng Zheng and Ph.D.\ candidates Qiqin Cheng and Linhao Zhu for their valuable advice on this work. Chaoyi Cai also gratefully acknowledges Professor Chang Shu for his invaluable assistance during Cai's visit to the National University of Singapore

\appendix
\begin{changemargin}{-0.2cm}{-0.2cm}
	\renewcommand\baselinestretch{0.86}
	\bibliographystyle{siamplain}
	\bibliography{ref}
\end{changemargin}
\newpage 
\end{document}